\documentclass[11pt, a4paper]{article} 
\title{Global existence in the Lipschitz class for the N-Peskin problem} 
\author{
	Francisco Gancedo
	\\{\footnotesize Departamento de An\'alisis Matemático \& IMUS}
	\\{\footnotesize Universidad de Sevilla}
	\\{\footnotesize Sevilla, Espa\~na}
	\\{\footnotesize email: {\it fgancedo@us.es}}
	\and 
	Rafael Granero-Belinch\'on
	\\{\footnotesize Departamento de Matem\'aticas, Estad\'istica y Computaci\'on}
	\\{\footnotesize Universidad de Cantabria}
	\\{\footnotesize Santander, Espa\~na}
	\\{\footnotesize email: {\it rafael.granero@unican.es}}
	\and 
	Stefano Scrobogna
	\\{\footnotesize Departamento de An\'alisis Matemático \& IMUS}
	\\{\footnotesize Universidad de Sevilla}
	\\{\footnotesize Sevilla, Espa\~na}
	\\{\footnotesize email: {\it scrobogna@us.es}}
}

\usepackage[a4paper]{geometry}
\geometry{
 a4paper,
 total={160mm,257mm},
 left=20mm,
 right=20mm,
 top=20mm,
 bottom=20mm
 }

\usepackage{empheq}
\usepackage{times}
\usepackage{stmaryrd}
\usepackage{fourier}
\usepackage[T1]{fontenc}
\usepackage{amssymb, amsmath,  bm, mathrsfs}
\usepackage{amsfonts}
\usepackage[english]{babel}
\usepackage{amssymb,amsthm}
\usepackage{mathtools}
\usepackage{marginnote}

\DeclareMathAlphabet{\mathcal}{OMS}{cmsy}{m}{n}
\usepackage{hyperref}
\usepackage{cite}
\allowdisplaybreaks[1]
\usepackage{xfrac}
\usepackage[utf8]{inputenc}
\usepackage[T1]{fontenc}
\usepackage{enumerate}
\usepackage{accents}

\usepackage{tikz}
\usetikzlibrary{arrows, automata, backgrounds, shadows, patterns, calc, hobby, plotmarks, shapes}
\usetikzlibrary{shapes.misc}

\tikzset{cross/.style={cross out, draw=black, minimum size=2*(#1-\pgflinewidth), inner sep=0pt, outer sep=0pt},
cross/.default={1pt}}

%
%


\newcommand{\ds}{\textnormal{d}{s}}
\newcommand{\dd}{\textnormal{d}}

\newcommand{\pare}[1]{\left( #1 \right)}

\newcommand{\Pare}[1]{\Big( #1 \Big)}
\newcommand{\norm}[1]{\left\| #1 \right\|}
\newcommand{\av}[1]{\left| #1 \right|}
\newcommand{\bra}[1]{\left[ #1 \right]}
\newcommand{\set}[1]{\left\{ #1 \right\}}

\newcommand{\ddt}{\frac{\textnormal{d}}{\textnormal{d}t}}
\newcommand{\pv}{\textnormal{p.v.}}

\newcommand{\RN}[1]{%
  \textup{\uppercase\expandafter{\romannumeral#1}}%
}

\newcommand{\cA}{\mathcal{A}}
\newcommand{\cC}{\mathcal{C}}
\newcommand{\cI}{\mathcal{I}}

\newcommand{\cB}{\mathcal{B}}

\newcommand{\cJ}{\mathcal{J}}

\newcommand{\cS}{\mathcal{S}}

\newcommand{\bR}{\mathbb{R}}
\newcommand{\bN}{\mathbb{N}}

\newcommand{\bS}{\mathbb{S}}

\newcommand{\cZ}{\mathcal{Z}}
\newcommand{\cH}{\mathcal{H}}
\newcommand{\cO}{\mathcal{O}}

\newcommand{\cY}{\mathcal{Y}}
\newcommand{\cP}{\mathcal{P}}
\newcommand{\cN}{\mathcal{N}}

\newcommand{\bZ}{\mathbb{Z}}

\newcommand{\bL}{\mathbb{L}}

\newcommand{\bt}{\bar{\theta}}
\newcommand{\br}{\bar{r}}
\newcommand{\be}{\bar{\eta}}

\newcommand{\btp}{\overline{\theta'}}
\newcommand{\brp}{\overline{r'}}
\newcommand{\bep}{\overline{\eta'}}

\newcommand{\ut}{\underline{\theta}}
\newcommand{\ur}{\underline{r}}
\newcommand{\ue}{\underline{\eta}}

\newcommand{\utp}{\underline{\theta'}}
\newcommand{\urp}{\underline{r'}}
\newcommand{\uep}{\underline{\eta'}}

\newcommand{\hra}{\hookrightarrow}

\newcommand{\jump}[1]{\left\llbracket #1 \right\rrbracket}

\theoremstyle{theorem}
\newtheorem{theorem}{Theorem}[section]

\newtheorem*{theorem*}{Theorem}
\newtheorem{prop}[theorem]{Proposition}
\newtheorem{lemma}[theorem]{Lemma}

\theoremstyle{definition}
\newtheorem{definition}[theorem]{Definition}

\numberwithin{equation}{section}


\begin{document}
\maketitle

\begin{abstract}
In this paper we study a  toy model of the Peskin problem  that captures the motion of the full Peskin problem in the normal direction and discards the tangential elastic stretching contributions. This model takes the form of a fully nonlinear scalar contour equation. The Peskin problem is a fluid-structure interaction problem that describes the motion of an elastic rod immersed in an incompressible Stokes fluid. We prove global in time existence of solution for initial data in the critical Lipschitz space. Using a new decomposition together with cancellation properties, pointwise methods allow us to obtain the desired estimates in the Lipschitz class. Moreover, we perform energy estimates in order to obtain that the solution lies in the space $L^2 \pare{ \bra{0,T};H^{3/2} }$ to satisfy the contour equation pointwise.
\end{abstract}

{\small
\tableofcontents}



\allowdisplaybreaks

\section{Introduction}
The two-dimensional Peskin problem  \cite{peskin1,peskin2} is a fluid-solid interaction problem that describes the flow of a viscous incompressible fluid in a region containing immersed boundaries. These immersed boundaries move with the fluid and exert forces on the fluid itself. An example of such a boundary is the flexible leaflet of a human heart valve. The immersed boundary method was initially formulated by Peskin to study flow patterns around heart valves \cite{peskin1}. This method was later developed to solve other fluid-structure interaction problems appearing in many different applications in physics, biology and medical sciences\cite{peskin2}. The distinguishing feature of this method was that the entire simulation was carried out on a Cartesian grid, and a novel procedure was formulated for imposing the effect of the immersed boundary on the flow.

More concretely, we consider the scenario where there is a elastic rod immersed in Stokes flow. Consequently, the filament, described by the simple, closed curve
\begin{equation*}
\Gamma(t)=\set{X(s,t)=\pare{ X_1(s,t),X_2(s,t) },\ s\in\mathbb{S}^1 },
\end{equation*}
drives the fluid and  generates the flow, while the flow pushes the rod and changes its shape. This curve separates the plane into two different regions, the outer region $\Omega^-(t)$ and the inner region $\Omega^+(t)$. \\

Mathematically, when the tension is $T(\alpha)$ and the elastic force density takes the form
\begin{equation*}
F(X)=\partial_s\left(T(|\partial_s X|)\frac{\partial_s X}{|\partial_s X|}\right),
\end{equation*}
the Peskin problem reads (see \cite{rodenberg20182d} for more details)
\begin{subequations}\label{peskin}
	\begin{align}
	-\Delta u^\pm&=-\nabla p^\pm && \text{ in }\Omega^\pm(t)\\
	\nabla \cdot u^\pm&=0 && \text{ in }\Omega^\pm(t)\\
	\jump{u}&=0 && \text{ on }\Gamma(t)\\
	\jump{\left(\nabla u+\nabla u^T-p\text{Id}\right)n}&=\frac{F(X)}{|\partial_s X|} && \text{ on }\Gamma(t)\\
	\partial_t X&=u && \text{ on }\Gamma(t),
	\end{align}
\end{subequations}
where $n$ denotes the outward pointing unit normal to the free boundary $\Gamma(t)$ and 
$$
\jump{U}=U^+-U^-.
$$

In the particular case where each infinitesimal segment of the rod behaves like a Hookean spring with elasticity coefficient equal to 1, we have that $T(\alpha)=\alpha$ and \eqref{peskin} provides
\begin{subequations}\label{peskin2}
	\begin{align}
	-\Delta u^\pm&=-\nabla p^\pm && \text{ in }\Omega^\pm(t)\\
	\nabla \cdot u^\pm&=0 && \text{ in }\Omega^\pm(t)\\
	\jump{u}&=0 && \text{ on }\Gamma(t)\\
	\jump{\left(\nabla u+\nabla u^T-p\text{Id}\right)n}&=\frac{\partial_s^2 X}{|\partial_s X|} && \text{ on }\Gamma(t)\\
	\partial_t X&=u && \text{ on }\Gamma(t).
	\end{align}
\end{subequations}

There is a large literature in the numerical analysis and applied mathematics communities for this problem. However, the works developing the theory for the PDEs \eqref{peskin2} are still scarce. On one hand Lin \& Tong \cite{LT19} proved a local existence result for arbitrary $H^{5/2}$ initial data. Furthermore, they also proved the global existence and exponential decay towards equilibrium for $H^{5/2}$ initial data near certain particular configurations. 

Mori, Rodenberg \& Spirn proved in \cite{MRS19} a local well-posedness result for \eqref{peskin2} for initial data of arbitrary size in the \emph{little H\"older} space $h^{1,\gamma}, \  \gamma > 0$. In addition, these authors also proved that the solution becomes $\cC^n$ for arbitrary $n$ in arbitrarily short amount of time, and that the above unique local solutions are global and decay exponentially toward a uniformly distributed circle of positive radius when the initial data is small in the $ h^{1, \gamma} $ topology.

The authors of \cite{MRS19} proved the above results taking advantage of the contour dynamics formulation of the problem \eqref{peskin2}. Indeed, if we drop the $t$ from the notation the system \eqref{peskin2} can be equivalently written as the following nonlinear and nonlocal system of 2 equations for $X$ \cite{LT19,MRS19}:
\begin{align}
\label{peskin2contoura}
\partial_t X(s)=\pv\int_{\mathbb{S}^1} G(X(s)-X(\sigma))\partial_\sigma^2 X(\sigma) \dd \sigma.
\end{align}
where the kernel $G$ is the so-called Stokeslet
$$
G\pare{z}= \frac{1}{4\pi}\left( -\log \av{z} \ I + \frac{z\otimes z}{\av{z}^2} \right). 
$$ 
Very recently, Garcia-Juarez, Mori \& Strain \cite{garcia2020peskin} proved a global well-posedness result for the Peskin problem when two fluids with different viscosities are considered. Their result applies for medium size initial interfaces in critical spaces akin to the Wiener algebra and shows instant analytic smoothing.

As noted before \cite{LT19,MRS19}, the Peskin problem has certain similarities with the Muskat problem (see \cite{alazard2020convexity,alazard2020endpoint,cameron2018global,castro2012rayleigh,cordoba2011interface,cheng2016well,gancedo2017survey,gancedo2020global,granero2020growth,
	matioc2018muskat,matioc2018viscous,nguyen2020paradifferential, GGS19, scrobogna2020well} and the references therein)
\begin{subequations}\label{Muskat}
	\begin{align}
	u^\pm&=-\nabla p^\pm -\rho^\pm(0,1) && \text{ in }\Omega^\pm(t)\\
	\nabla \cdot u^\pm&=0 && \text{ in }\Omega^\pm(t)\\
	\jump{p}&=0 && \text{ on }\Gamma(t)\\
	\partial_t x &=u\cdot n && \text{ on }\Gamma(t).
	\end{align}
\end{subequations} 
First of all, both free boundary problems can be written as contour equations akin to \eqref{peskin2contoura}. Indeed, the Muskat problem when the fluids are separated by the graph of a the function $x(s,t)\in\mathbb{R}$ can be written as
\begin{align}\label{peskin2contourab}
\partial_t x(s)=\pv\int_{\mathbb{S}^1} K\pare{ x(s)-x(\sigma) }\pare{ x'(s)-x'(\sigma) } \dd \sigma,
\end{align}
where the kernel $K$ is a nonlinear version of the Hilber transform \cite{cordoba2011interface}. Also, both systems have a natural energy balance, in the case of the Muskat problem, the energy law reads
$$
-\jump{\rho}\|x(T)\|_{L^2(\mathbb{S}^1)}^2+2\int_0^T\|u(t)\|_{L^2(\mathbb{S}^1\times\mathbb{R})}^2\dd t= -\jump{\rho}\|x_0\|_{L^2(\mathbb{S}^1)}^2 , 
$$
while for the Peskin problem, the energy balance is
$$
\|X'(T)\|_{L^2(\mathbb{S}^1)}^2+2\int_0^T\|\nabla u(t)\|_{L^2(\mathbb{R}^2)}^2\dd t= \|X'_0\|_{L^2(\mathbb{S}^1)}^2.
$$
In both cases the energy balance is too weak to, just by itself, provide us with global existence of weak solutions. Some other similarities appear at the linear level but before stating that we need to introduce some notation. Let us recall the definition of the periodic Hilbert transform
\begin{equation*}
\cH f \pare{s} = \frac{1}{2\pi} \pv\int_{\mathbb{S}^1} \cot\pare{\alpha / 2} f \pare{s-\alpha} \dd \alpha.
\end{equation*}
Then we define the Lambda operator $ \Lambda f =  \cH \partial_s f $. With this notation we observe that the linearized Peskin problem (around the unitary circle) is \cite[Lemma 6.2]{LT19}
\begin{align} \label{eq:linear_Peskin}
\partial_t \cY = - \frac{1}{4} \ \Lambda \cY  +  \frac{1}{4}\pare{\begin{array}{cc}
	0 & -\cH \\ \cH & 0
	\end{array}} \cY,  
\end{align}
while the linear Muskat problem equals
\begin{equation}
\label{eq:linear_Muskat}
\partial_t y =  \frac{\jump{\rho}}{2} \Lambda y.
\end{equation}
Then we notice that, despites its numerous similarities, the Peskin problem and the Muskat problem are rather different, being this difference already clear at the linear level. The first stark difference can be immediately deduced comparing \eqref{eq:linear_Peskin} with \eqref{eq:linear_Muskat}; while \eqref{eq:linear_Muskat} has a difussive operator that behaves well for $ \dot{W}^{k, \infty}, k\in \bN $ functions, the linearized Peskin problem \eqref{eq:linear_Peskin} is necessarily more challenging in the same functional setting due to the unboundedness of the Hilbert transform $ \cH $ in $ L^\infty $ and the coupling of both unknowns in the system. This problem is also present in the toy model that we study in the present manuscript and will be handled noticing that, denoting with $ J $ the symplectic matrix, the term $ -J\cH \cY $ appearing on the right hand side of \eqref{eq:linear_Peskin} codify an inertial displacement. 
 
Furthermore if we decouple the linear Peskin problem we find additional differences. Indeed, if we take a time derivative, we find that
$$
\partial_t^2 \cY^1 =-\frac{1}{4} \Lambda  \partial_t\cY^1-\frac{1}{4} \cH \partial_t\cY^2=-\frac{1}{4} \Lambda  \partial_t\cY^1-\frac{1}{4} \cH \left(-\frac{1}{4} \Lambda \cY^2+\frac{1}{4}\cH \cY^1\right).
$$
Taking a space derivative of the equation for $\cY^1$ we obtain that
$$
\partial_t \partial_s\cY^1 +\frac{1}{4} \Lambda \partial_s \cY^1=-\frac{1}{4}\Lambda \cY^2.
$$
Substituting the latter expression we conclude that
$$
\partial_t^2 \cY^1 =-\frac{1}{4} \Lambda  \partial_t\cY^1-\frac{1}{4} \cH \left(\partial_t \partial_s\cY^1 +\frac{1}{4} \Lambda \partial_s \cY^1+\frac{1}{4}\cH \cY^1\right).
$$
Using the properties of the Hilbert transform for zero-mean functions we find the linear Klein-Gordon-like equation 
$$
\partial_t^2 \cY +\frac{1}{2} \Lambda  \partial_t\cY=\frac{1}{16} \partial_s^2\cY+\frac{1}{16} \cY
$$
for each component. This equation is very different to the parabolic equation for the Muskat problem.

On the other hand, in the Peskin problem it is not possible to reparameterize the contour equation at convenience to obtain the same nonlinear solution. While the reparameterization freedom in free boundary problems for incompressible fluid have been extensively used as a help to deal with the nonlinear structure of nonlocal equations, the Peskin problem is sensible to reparameterizations in the sense that concentration of particles in the rod affects the elastic dynamics. Thus, different reparameterizations  give rise to different dynamics and can converge to different steady state as time goes to infinity \cite{LT19,MRS19,garcia2020peskin}. Nevertheless,  the right hand side of the nonlocal system  \eqref{peskin2contoura} is invariant with respect to translations, so that the appropriate time-dependent translation $M(t)$ allows us to control  some linear contributions which arise in the dynamics of the linearized version of \eqref{peskin2contoura}.

Additionally, the Peskin problem lacks a divergence form structure. This is another rather big difference with the Muskat problem and makes passing to the limit in the weak formulation a rather delicate issue. To overcome this challenge we will use energy estimates in $\dot{H}^1$. This energy estimate will give the parabolic effect which is necessary to pass to the limit in the weak formulation.

In order to better understand the mathematical subtleties and challenges of the Peskin problem, in this work we consider a scalar model of the Peskin problem (see equation \eqref{eq:model} below). This  toy model, which we denote from now on with the name of \emph{N-Peskin} problem, takes the form of a fully nonlinear contour equation and shares most of the difficulties mentioned above  but discards the  contributions of the motion due to tangential elastic stretching of the rod. The study of scalar toy models in fluid dynamics  is a classical research area that goes back to the work of Constantin-Lax-Majda \cite{CLM1985}.

The plan of the paper is as follows: In Section \ref{sec2} we present our main result and the methodology. Furthermore, we also introduce there our new formulation of the Peskin problem. In Section \ref{sec3}, we state several pointwise bounds for singular integral operators. In Section \ref{sec:W1inftydecay} we prove the \emph{a priori} estimates showing the decay in the Lipschitz norm. Later, in Section \ref{sec:H1inftydecay} we prove the \emph{a priori} estimates in Sobolev spaces. These estimates are lower order but allow us to use the parabolic gain of regularity. In Section \ref{sec:path} we prove the estimates for the time derivative of the solution. These estimates are required to ensure the compactness required to pass to the limit in the weak formulation. Finally, in Section \ref{sec7} we prove the main result of this paper.

\subsection{Notation}
We denote with $ C $ any positive constant whose value is independent of the physical parameters of the problem, the explicit value of $ C $ may vary from line to line. We write $ A\lesssim B $ if $ A\leq C B $ and $ A\sim B $ if $ A\lesssim B $ and $ B\lesssim A $. 

We denote by $ \cP \in \cC^\infty\pare{\left[0, 1 \right) ; \bR_+} $ any universal function such that $ \cP\pare{0} \geq 0 $ and such that for any $ y\in \bra{0, 1/2} $ there exists a $ N $ for which the bound $
\cP\pare{y}\leq C\pare{1+y}^N $
holds true. The explicit value of $ \cP $ may vary from line to line. 

The one dimensional torus, i.e. the interval $ \bra{-\pi, \pi} $ endowed with periodic boundary conditions, is denoted by $ \bS^1 $. Given any $ f\in \cC^1\pare{\bS^1} $ we denote with $ f' $ the covariant derivative of $ f $ onto $ \bS^1 $ endowed with the euclidean metric, and $ f^{(k)} $ denotes the operator $ \cdot ' $ iterated $ k $ times. We define $ \Lambda f = \cH f' $ and we recall that such operator can be expressed as the Fourier multiplier $ \widehat{\Lambda f}\pare{n} = \av{n} \hat{f}\pare{n} $. We can thus define the Sobolev spaces of fractional order (here $ \cS  $ denotes the periodic Schwartz class and $ \cS_0 $ the periodic Schwartz class with zero average)
\begin{align*}
H^s\pare{\bS^1} = \set{f\in \cS' \ \left| \ \pare{1+\Lambda}^s f \in L^2 \right. }, && \dot{H}^s\pare{\bS^1} = \set{f\in \cS'_0 \ \left| \ \Lambda^s f \in L^2 \right. }, 
\end{align*}
for any $ s \in \bR $. For any $ \pare{p, k}\in\bra{1, \infty}\times \bN $ we denote with 
\begin{align*}
W^{k, p}\pare{\bS^1} = \set{f\in\cS' \ \left| \ f, f^{(k)}\in L^p\pare{\bS^1} \right. }, && \dot{W}^{k, p}\pare{\bS^1} = \set{f\in\cS'_0 \ \left| \ f^{(k)}\in L^p\pare{\bS^1} \right. }. 
\end{align*}
We use the notation 
\begin{align*}
L^p = L^p\pare{\bS^1}, && H^s = H^s\pare{\bS^1}, && W^{k, p} = W^{k, p}\pare{\bS^1}, 
\end{align*}
for functional spaces defined on the one-dimensional torus.
Additionally, we use the simplified notation
\begin{equation*}
\int \bullet \ \dd s=\text{p.v.}\int_{\mathbb{S}^1}  \bullet \ \dd s =\text{p.v.}\int_{-\pi}^\pi  \bullet \ \dd s  , 
\end{equation*}
in order to indicate Cauchy principal value integrals on the one-dimensional torus.

\section{Main result and methodology}\label{sec2}
\subsection{Derivation of the N-Peskin problem} \label{sec:h-M}
As we have seen before, the Peskin problem can be written as the following contour equations 
\begin{equation}\label{eq:Peskin} \tag{P}
\begin{aligned}
\partial_t X\pare{s, t} & = \int G\pare{X\pare{s, t} - X\pare{\sigma , t}} X''\pare{\sigma, t} \dd \sigma, \\
G\pare{z} & = G_1(z)+G_2(z),\\
G_{1} \pare{z} & = -\frac{1 }{4\pi}  \log\av{z}\ I, \\
G_{2} \pare{z} & = \frac{1}{4\pi} \frac{z\otimes z}{\av{z}^{2}}=\frac{1}{4\pi|z|^2}\left(\begin{array}{cc}z_1^2 & z_1z_2\\ z_1z_2 & z_2^2\end{array}\right).
\end{aligned}
\end{equation}
In this section we present the model of the Peskin problem that we  consider in this work.

To simplify the notation we write $ \gamma = \gamma \pare{s} = \pare{ \cos  s, \sin  s} $ and $Y\pare{s, t} =  \ 
\pare{1 + h\pare{s, t}} \gamma\pare{s}$. Then let us suppose that
\begin{equation}
\label{eq:ansatz}
\begin{aligned}
X\pare{s, t} = & \ M\pare{t} + Y\pare{s, t}, 
\end{aligned}
\end{equation}
where we define the point $M(t)$ as the solution of the following ODE in terms of $h(s,t)$
\begin{equation}\label{eq:M}
\ddt{M}\pare{t} =  \frac{1}{4} \frac{1}{2\pi}\int h\pare{s, t} \pare{ \cos (s), \sin  (s)} \ds . 
\end{equation} Roughly speaking, we use this $M(t)$ to control the inertial effects of the system. Mathematically, this unknown is required in order to absorb a low order nonlocal linear contribution akin to the first Fourier mode.

With the ansatz \eqref{eq:ansatz} the evolution equation \eqref{eq:Peskin} becomes
\begin{equation*}
\partial_t Y\pare{s, t} + \ddt{M}\pare{t} = \int G\pare{Y \pare{s, t} - Y\pare{\sigma , t}} Y''\pare{\sigma, t} \dd \sigma. 
\end{equation*}

We can further compute
\begin{equation*}
\gamma\pare{s} \partial_t h\pare{s} +  \ddt{M} \pare{t} 
= \int G \pare{\pare{1+h\pare{s}}\gamma \pare{s} - \pare{1+h\pare{\sigma}}\gamma \pare{\sigma}} \bra{ \gamma\pare{\sigma}\pare{ h''\pare{\sigma} -1-h\pare{\sigma}  } + 2 \gamma'\pare{\sigma} h'\pare{\sigma} } \dd \sigma .
\end{equation*}
We write
\begin{align*}
I_1 \pare{s} & =  \int G_{1}\pare{\pare{1+h\pare{s}}\gamma \pare{s} - \pare{1+h\pare{\sigma}}\gamma \pare{\sigma}} \bra{ \gamma\pare{\sigma}\pare{ h''\pare{\sigma} - 1 - h\pare{\sigma}  } + 2 \gamma'\pare{\sigma} h'\pare{\sigma} } \dd \sigma, \\
I_2 \pare{s} & =  \int G_{2} \pare{\pare{1+h\pare{s}}\gamma \pare{s} - \pare{1+h\pare{\sigma}}\gamma \pare{\sigma}} \bra{ \gamma\pare{\sigma}\pare{h''\pare{\sigma} - 1 - h\pare{\sigma}  } + 2 \gamma'\pare{\sigma} h'\pare{\sigma} } \dd \sigma
,
\end{align*}

so that
\begin{equation}\label{eq:eveqh1}
\begin{aligned}
\gamma\pare{s} \partial_t h\pare{s} & =  I_1 \pare{s} + I_2 \pare{s} -  \ddt{M} \pare{t}.
\end{aligned}
\end{equation}

So far, \eqref{eq:M} and \eqref{eq:eveqh1} are a new formulation of the full Peskin problem. This new $h-M$ formulation is the starting point for the derivation of our scalar N-Peskin.

Taking the scalar product of \eqref{eq:eveqh1} with $ \gamma\pare{s} $, we derive the \emph{scalar} evolution equation
\begin{equation}\label{eq:model}
\partial_t h\pare{s, t}  = \gamma\pare{s} \cdot I_1 \pare{s, t} + \gamma\pare{s} \cdot I_2 \pare{s, t} -   \gamma\pare{s}\cdot   \ddt{M} \pare{t}. 
\end{equation}
We propose equation \eqref{eq:model} as a scalar model of the full Peskin problem. However, as the tangential velocity is neglected in our approach we name this equation the N-Peskin problem.

Let us simplify \eqref{eq:model}. Using classical trigonometric identities, the first term can be explicitly written as:
\begin{multline}\label{eq:computation_I1}
\gamma\pare{s} \cdot I_1 \pare{s}  \\ =    \int -\frac{1 }{4\pi}  \log\pare{\av{\pare{1+h\pare{s}}\gamma \pare{s} - \pare{1+h\pare{\sigma}}\gamma\pare{\sigma}}} \bra{ \cos\pare{s-\sigma}\pare{ h''\pare{\sigma} - 1 - h\pare{\sigma}  } + 2 \sin\pare{s-\sigma} h'\pare{\sigma} } \dd \sigma\\ =    \int -\frac{1 }{8\pi}  \log\pare{\av{\pare{1+h\pare{s}}\gamma \pare{s} - \pare{1+h\pare{\sigma}}\gamma\pare{\sigma}}^2} \partial_\sigma^2\bra{ \cos\pare{s-\sigma}\pare{ 1+ h\pare{\sigma}  }}\dd \sigma  .
\end{multline}
Let us now simplify the expression of the second kernel $G_2$. We have that
\begin{multline*}
\gamma \pare{s} \cdot G_{2} \pare{\pare{1+h\pare{s}}\gamma \pare{s} - \pare{1+h\pare{\sigma}}\gamma \pare{\sigma}} \cdot \gamma \pare{\sigma} \\
\begin{aligned}
= & \ \frac{1}{4\pi}  \frac{\gamma_i \pare{s} \pare{\pare{1+h\pare{s}}\gamma_i \pare{s} - \pare{1+h\pare{\sigma}}\gamma_i \pare{\sigma}} \pare{\pare{1+h\pare{s}}\gamma_j \pare{s} - \pare{1+h\pare{\sigma}}\gamma_j \pare{\sigma}} \gamma_j \pare{\sigma}}{\av{\pare{1+h\pare{s}}\gamma \pare{s} - \pare{1+h\pare{\sigma}}\gamma \pare{\sigma}}^2} ,  \\
= & \ \frac{1}{4\pi}  \frac{ \bra{1+h\pare{s}- \pare{1+h\pare{\sigma}} \cos \pare{s-\sigma}} \bra{\pare{1+h\pare{s}} \cos \pare{s-\sigma} - \pare{1+h\pare{\sigma}}}}{ 
	\pare{1 + h\pare{s}}^2 + \pare{1 + h\pare{\sigma}}^2  - 2 \pare{1 + h\pare{s}}\pare{1 + h\pare{\sigma}} \cos\pare{s-\sigma}  
} , 
\end{aligned}
\end{multline*}
and 
\begin{multline*}
\gamma \pare{s} \cdot G_{2} \pare{\pare{1+h\pare{s}}\gamma \pare{s} - \pare{1+h\pare{\sigma}}\gamma \pare{\sigma}} \cdot \gamma' \pare{\sigma} \\
\begin{aligned}
= & \ \frac{1}{4\pi} \frac{\gamma_i \pare{s} \pare{\pare{1+h\pare{s}}\gamma_i \pare{s} - \pare{1+h\pare{\sigma}}\gamma_i \pare{\sigma}} \pare{\pare{1+h\pare{s}}\gamma_j \pare{s} - \pare{1+h\pare{\sigma}}\gamma_j \pare{\sigma}} \gamma_j' \pare{\sigma}}{ \av{\pare{1+h\pare{s}}\gamma \pare{s} - \pare{1+h\pare{\sigma}}\gamma \pare{\sigma}}^2 },  \\
= & \ \frac{1}{4\pi}  \frac{ \bra{1+h\pare{s}- \pare{1+h\pare{\sigma}} \cos \pare{s-\sigma}} \pare{1+h\pare{s}} \sin \pare{s-\sigma} }{
	\pare{1 + h\pare{s}}^2 + \pare{1 + h\pare{\sigma}}^2  - 2 \pare{1 + h\pare{s}}\pare{1 + h\pare{\sigma}} \cos\pare{s-\sigma}  
} , 
\end{aligned}
\end{multline*}

After the change of variables $ \sigma = s-\alpha $ we find that
\begin{multline*}
\gamma\pare{s} \cdot I_2 \pare{s}  \\
= \frac{1}{4\pi}   \int    \frac{ \bra{1+h\pare{s}- \pare{1+h\pare{s-\alpha}} \cos \alpha} \bra{\pare{1+h\pare{s}} \cos \alpha - \pare{1+h\pare{s-\alpha}}}   }{ 
	\pare{1 + h\pare{s}}^2 + \pare{1 + h\pare{s-\alpha}}^2  - 2 \pare{1 + h\pare{s}}\pare{1 + h\pare{s-\alpha}} \cos\alpha } \ \pare{ h''\pare{s-\alpha}  - 1 - h\pare{s-\alpha}  } \dd \alpha \\
+ \frac{1}{2\pi}  \pare{1+h\pare{s}} \int    \frac{ \bra{1+h\pare{s}- \pare{1+h\pare{s-\alpha}} \cos \alpha}  \sin \alpha    }{
	\pare{1 + h\pare{s}}^2 + \pare{1 + h\pare{s-\alpha}}^2  - 2 \pare{1 + h\pare{s}}\pare{1 + h\pare{s-\alpha}} \cos\alpha }  h'\pare{s-\alpha}  \dd \alpha.
\end{multline*}

Collecting the previous expressions and changing variables, we conclude the following scalar equation for $h$:
\begin{equation}
\label{eq:eveqh2}
\begin{aligned}
& \ \partial_t h\pare{s} + \gamma\pare{s}\cdot   \ddt{M}
\\
= &   - \frac{1}{8\pi}\int \Bigg\{  \log  \pare{ 
	\pare{1 + h\pare{s}}^2 + \pare{1 + h\pare{s-\alpha}}^2  - 2 \pare{1 + h\pare{s}}\pare{1 + h\pare{s-\alpha}} \cos\alpha} \ \partial_\alpha^2 \bra{ \big. \cos\alpha \pare{1 + h\pare{s-\alpha}} } \Bigg\} \dd \alpha \\
& +\frac{1}{4\pi}  \int    \frac{ \bra{1+h\pare{s}- \pare{1+h\pare{s-\alpha}} \cos \alpha} \bra{\pare{1+h\pare{s}} \cos \alpha - \pare{1+h\pare{s-\alpha}}}   }{ 
	\pare{1 + h\pare{s}}^2 + \pare{1 + h\pare{s-\alpha}}^2  - 2 \pare{1 + h\pare{s}}\pare{1 + h\pare{s-\alpha}} \cos\alpha } \ \pare{ h''\pare{s-\alpha}  - 1 - h\pare{s-\alpha}  } \dd \alpha \\
& + \frac{1}{2 \pi} \pare{1+h\pare{s}} \int    \frac{ \bra{1+h\pare{s}- \pare{1+h\pare{s-\alpha}} \cos \alpha}  \sin \alpha    }{
	\pare{1 + h\pare{s}}^2 + \pare{1 + h\pare{s-\alpha}}^2  - 2 \pare{1 + h\pare{s}}\pare{1 + h\pare{s-\alpha}} \cos\alpha }  h'\pare{s-\alpha}  \dd \alpha.
\end{aligned}
\end{equation}

Then, we define the following notion of weak solution

\begin{definition}\label{definition1}
	We say that 
	$h$ is a weak solution of the N-Peskin problem \eqref{eq:eveqh2} if the following equality holds
	$$
	-\int \varphi(s,0)h_{0}(s)\dd s +\int_0^T\int -\pare{ \partial_t \varphi (s,t) h (s,t) +  \frac{1}{4} \Lambda\varphi (s,t) h(s,t)  - \cN\pare{h(s,t)}\varphi(s,t) } \dd s\dd t=0,$$ 
	for all $\varphi\in C^\infty_{c}([ 0,T)\times \mathbb{S}^1)$,
	where $ \cN $ is the nonlinearity
	\begin{equation*}
	\cN\pare{h\pare{s, t}} - \frac{1}{4}\Lambda h\pare{s, t} + \gamma\pare{s}\cdot   \ddt{M}\pare{t} = \text{r.h.s. of \eqref{eq:eveqh2}}.  
	\end{equation*}
\end{definition}

\subsection{The linear $h-M$ formulation of the Peskin problem} \label{sec:linearized_h}

To better understand the role of $M(t)$ and the reason behind its definition through the aforementioned ODE, we are going to compute the he linearized Peskin problem in the $h-M$ formulation. The linear Peskin problem for arbitrary curves can be expressed as \eqref{eq:linear_Peskin}. In the radial configuration we have that
\begin{align} \label{eq:cYtor}
\cY_1\pare{s} = r\pare{s,t} \ \cos s , && \cY_2\pare{s} = r\pare{s,t} \ \sin s,
\end{align}
where
$$
r(s,t)=1+h(s,t).
$$
Thus, multiplying \eqref{eq:linear_Peskin} by $ \gamma $, we obtain that
\begin{align*}
\partial_t r(s,t)&= - \frac{1}{4}\bra{ \cos (s) \ \Lambda \pare{r(s,t) \cos(s) } + \sin (s) \  \Lambda \pare{r(s,t)\sin(s) } \big.  } \\&\quad+ 
\frac{1}{4}\bra{ -\cos (s) \ \cH \pare{r(s,t) \sin(s) } + \sin (s) \  \cH \pare{r(s,t)\cos(s) }  \big.  }\\& = L_1 + L_2. 
\end{align*}

Dropping the $t$ from the notation we compute that
\begin{align*}
- \frac{1}{4}  \cos (s) \ \Lambda \pare{r \cos } \pare{s} = & \ -\frac{1}{8\pi} \cos (s) \  \int  \cot\pare{\alpha/2} \pare{r\pare{s-\alpha} \cos\pare{s-\alpha}}' \dd \alpha , \\
= & \ \frac{1}{8\pi} \cos (s) \   \int  \cot\pare{\alpha/2} \partial_\alpha \pare{r\pare{s-\alpha} \cos\pare{s-\alpha} - r\pare{s} \cos s} \dd \alpha , \\
= & \ \frac{1}{8\pi} \cos^2 (s) \ \int  \frac{1}{2\sin^2\pare{\alpha / 2}}    \pare{r\pare{s-\alpha} \cos\alpha - r\pare{s}} \dd \alpha , \\
& \ + \frac{1}{8\pi} \sin (s) \ \cos (s) \  \int  \frac{1}{2\sin^2\pare{\alpha / 2}}  r\pare{s-\alpha}\sin(\alpha) \  \dd \alpha , 
\end{align*}
and
\begin{align*}
- \frac{1}{4}  \sin (s) \ \Lambda \pare{r \sin } \pare{s} = & \ -\frac{1}{8\pi} \sin (s)  \  \int  \cot\pare{\alpha/2} \pare{r\pare{s-\alpha} \sin\pare{s-\alpha}}' \dd \alpha , \\
= & \ \frac{1}{8\pi} \sin (s) \ \int  \cot\pare{\alpha/2} \partial_\alpha \pare{r\pare{s-\alpha} \sin\pare{s-\alpha} -r\pare{s} \sin s} \dd \alpha , \\
= & \ \frac{1}{8\pi} \sin^2 (s) \ \int  \frac{1}{2\sin^2\pare{\alpha / 2}} \pare{r\pare{s- \alpha}\cos\alpha - r\pare{s}} \dd \alpha , \\
& \  -\frac{1}{8\pi}  \sin (s) \cos (s) \ \int  \frac{1}{2\sin^2\pare{\alpha / 2}}  r\pare{s-\alpha}\sin(\alpha) \  \dd \alpha.
\end{align*}
As a consequence we obtain
\begin{equation*}
L_1 =  \frac{1}{4}\frac{1}{2\pi}\int  \frac{1}{2\sin^2\pare{\alpha / 2}} \pare{r\pare{s- \alpha}\cos\alpha - r\pare{s}} \dd \alpha. 
\end{equation*}
In a similar fashion we compute 
\begin{equation*}
L_2 =   \frac{1}{2}\frac{1}{2\pi} \int \cos^2\pare{\alpha/2} \ r\pare{s-\alpha} \dd \alpha. 
\end{equation*}
Summing up these two expressions and substituting $ r = 1+ h $, we conclude that
\begin{equation}\label{eq:evolution_linearization_h}
\partial_t h \pare{s} = - \frac{1}{4} \frac{1}{2\pi} \int \frac{h\pare{s} - h\pare{s-\alpha}}{2\sin^2\pare{\alpha/2}} \dd \alpha + \frac{1}{4} \frac{1}{2\pi} \int h\pare{s-\alpha} \cos \alpha \ \dd \alpha. 
\end{equation}
We see now that in the $h-M$ formulation the Peskin problem is parabolic at the linear level with a nonlocal zeroth-order forcing term. Moreover, we can compute
$$
\ddt{M}\pare{t}\cdot \gamma(s) =  \frac{1}{4}\frac{1}{2\pi} \int h\pare{\alpha, t} \pare{ \cos (\alpha), \sin  (\alpha)} \cdot \pare{\cos (s), \sin  (s)} \dd \alpha =  \frac{1}{4}\frac{1}{2\pi} \int h\pare{\alpha, t} \cos (s-\alpha) \dd \alpha. 
$$
As a consequence, we also realize that the ODE for $M(t)$ is designed to absorb some of the linear contributions.

\subsection{Main result}
It is known \cite[Section 5]{MRS19} that the set of stationary solutions of \eqref{eq:Peskin} are circles. As a consequence, the equilibrium configurations are determined by the center and radius of the stationary circle. Without loss of generality we  assume in what follows that the radius of the equilibrium circle equals one (different values can be handled similarly). The purpose of this paper is to establish the global existence and decay to equilibrium for \eqref{eq:eveqh2} in the case of Lipschitz initial data $X_0(s)\in W^{1,\infty}(\mathbb{S}^1)$ sufficiently close to an equilibrium configuration. 

In particular the following theorem is the main result of the present manuscript:

\begin{theorem}\label{teo1}Let $h_0(s)\in W^{1,\infty}(\mathbb{S}^1)$ be the initial data for \eqref{eq:eveqh2}. There exists a universal constant $c_0\ll1$ such that if $h_0$ satisfies
	\begin{align}
	\label{eq:average_relation}
	& \av{h_0}_{W^{1,\infty}(\mathbb{S}^1)}  \leq c_0, \nonumber
	\end{align}
	then there exists a global in time weak solution of \eqref{eq:eveqh2}  in the sense of Definition \ref{definition1}
	which belong to the energy space
	\begin{align*}
	h\in L^\infty \pare{ [0,T);W^{1,\infty}\pare{\bS^1}  }\cap C\pare{ [0,T);H^1\pare{\bS^1} }\cap L^2 \pare{ [0,T);H^{3/2}\pare{\bS^1}  }, && \forall \  T\in\pare{0, \infty}
	\end{align*}
Furthermore for any $ 0<1\ll t < T $ we have that
	\begin{equation}\label{eq:convergence_to_zero}	
		\av{h(t)}_{W^{1,\infty}(\mathbb{S}^1)} \leq \av{h_0}_{W^{1,\infty}(\mathbb{S}^1)} ,
	\end{equation}
	and 
\begin{equation*}
\av{h'(t)} _{L^\infty}\leq \av{h'_0}_{L^\infty} e^{-\delta t},
\end{equation*}
for a small enough $\delta(h_0)>0$.
\end{theorem}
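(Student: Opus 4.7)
The plan is to prove Theorem \ref{teo1} via a standard approximation/a priori estimate/compactness scheme, where the heart of the argument is the combination of the pointwise Lipschitz estimate proved in Section \ref{sec:W1inftydecay} with the parabolic $\dot{H}^1\to \dot{H}^{3/2}$ gain proved in Section \ref{sec:H1inftydecay}. First I would regularize the initial data (e.g.\ by mollification $h_0^\varepsilon = \rho_\varepsilon * h_0$) so that $h_0^\varepsilon\in C^\infty$ and $\|h_0^\varepsilon\|_{W^{1,\infty}}\leq \|h_0\|_{W^{1,\infty}}\leq c_0$. I would then construct a sequence of smooth approximate solutions $\{h^\varepsilon\}$ — for instance by adding an artificial viscosity $\varepsilon \partial_s^2 h^\varepsilon$ to the right-hand side of \eqref{eq:eveqh2} coupled with the ODE \eqref{eq:M} for $M^\varepsilon(t)$, and solving the resulting semilinear parabolic system locally in time by a fixed point argument in $H^3(\bS^1)$, say.

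Next, I would upgrade the local approximations to global ones by closing the a priori bounds. The main step is: as long as $\|h^\varepsilon(t)\|_{W^{1,\infty}}\leq c_0$ stays small, the pointwise argument of Section \ref{sec:W1inftydecay} applies to $h^\varepsilon$ (the viscous regularization is compatible with the pointwise maximum-principle type argument because $\varepsilon \partial_s^2$ contributes with the favorable sign at the maximum of $h^\varepsilon$ or of $\partial_s h^\varepsilon$). This yields both the preservation of smallness in \eqref{eq:convergence_to_zero} and, on the derivative, the strict decay leading to the exponential bound $\|\partial_s h^\varepsilon(t)\|_{L^\infty}\leq \|\partial_s h_0\|_{L^\infty} e^{-\delta t}$ uniformly in $\varepsilon$. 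Given this $W^{1,\infty}$ control, the energy estimate in Section \ref{sec:H1inftydecay} supplies the uniform bound
\begin{equation*}
\sup_{t\in[0,T]}\|h^\varepsilon(t)\|_{H^1}^2 + \int_0^T \|h^\varepsilon(t)\|_{\dot{H}^{3/2}}^2\, \dd t \leq C(h_0, T),
\end{equation*}
by continuation. Iterating, the approximate solutions extend to $[0,T]$ for every $T>0$.

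The third step is compactness. The uniform bounds in $L^\infty([0,T];W^{1,\infty}\cap H^1)\cap L^2([0,T];H^{3/2})$ together with the time-derivative estimate of Section \ref{sec:path}, which I would use to bound $\partial_t h^\varepsilon$ in $L^2([0,T];H^{-\delta})$ for some $\delta>0$ (or in a suitable negative-order space obtained by inspecting the quadratic/cubic structure of the nonlinearity in \eqref{eq:eveqh2}), allow one to apply an Aubin--Lions--Simon argument and extract a subsequence converging to some $h$ strongly in $C([0,T];H^{1-\eta})$ for $\eta>0$ small, weakly in $L^2H^{3/2}$ and weakly-$*$ in $L^\infty W^{1,\infty}$. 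The bound for $\ddt M^\varepsilon$ follows directly from \eqref{eq:M} and the $L^\infty$ bound on $h^\varepsilon$, giving equicontinuity of $M^\varepsilon$ and a limit $M\in W^{1,\infty}([0,T])$. With these convergences, each of the three integral terms in \eqref{eq:eveqh2} can be passed to the limit against a test function $\varphi\in C^\infty_c([0,T)\times \bS^1)$ — the logarithmic term of $I_1$ is handled using the strong convergence of $h^\varepsilon$ and the uniform non-degeneracy $\pare{1+h^\varepsilon(s)}^2+\pare{1+h^\varepsilon(\sigma)}^2 - 2(1+h^\varepsilon(s))(1+h^\varepsilon(\sigma))\cos(s-\sigma) \gtrsim |s-\sigma|^2$ guaranteed by the smallness of $h^\varepsilon$, while the $G_2$-type fractions are rational expressions in quantities that converge strongly in $L^\infty$.

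The main obstacle, and the place where I expect the delicate work to lie, is the passage to the limit for the term that carries the top-order derivative $h''(s-\alpha)$ in the second integral of \eqref{eq:eveqh2}: this term is not in divergence form, so a naive integration by parts against $\varphi$ is unavailable, and the uniform bound on $h^\varepsilon$ in $L^2_t H^{3/2}$ is precisely borderline for making sense of $h^{\varepsilon\prime\prime}$ as a distribution paired with a bounded rational coefficient. The remedy is to exploit cancellations in the kernel — writing the quotient with its value at $h\equiv 0$ plus an $h$-dependent remainder, using the symmetry $\alpha\mapsto -\alpha$ to absorb one derivative via a commutator, and then applying the strong $L^2_t H^1$ convergence on the remainder. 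Once the limit identity \eqref{eq:eveqh2} is verified in the sense of Definition \ref{definition1}, the exponential decay \eqref{eq:convergence_to_zero} and the stated bound on $\|h'(t)\|_{L^\infty}$ pass to the limit by lower semicontinuity, completing the proof.
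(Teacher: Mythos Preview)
Your proposal is correct and follows essentially the same route as the paper: vanishing viscosity regularization with mollified data, global extension of the approximate solutions via the pointwise $W^{1,\infty}$ estimate of Section~\ref{sec:W1inftydecay} (the added $\varepsilon\partial_s^2$ indeed having the right sign at extrema) combined with the $H^1\to H^{3/2}$ energy bound of Section~\ref{sec:H1inftydecay}, Aubin--Lions compactness using the $\partial_t h$ bound of Section~\ref{sec:path}, and then passage to the limit in the weak formulation. The only mild difference is in the treatment of the top-order term: rather than a commutator argument, the paper first integrates by parts in $\alpha$ so that only $\partial_\alpha\theta$ appears, isolates the most singular piece $\int \theta(\partial_\alpha\theta)^2/(2r\sin(\alpha/2))^2\,\dd\alpha$, and then passes to the limit by symmetrizing in $(s,\sigma)$ and using the strong convergence in $L^2_t H^{3/2-\vartheta}$ --- but this is the same mechanism you describe, just phrased slightly differently.
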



We remark that  \eqref{eq:Peskin} is invariant with respect to the transformation
\begin{align*}
X_\lambda(s,t)=\frac{1}{\lambda}X(\lambda s,\lambda t),
&& \lambda\in\bZ^+ , 
\end{align*}
thus,
$$
 L^\infty\pare{ 0,T; \dot{W}^{1,\infty}\pare{ \mathbb{S}^1 } },
$$
is critical with respect to the previous scaling.

\subsection{Methodology}
Let us explain the main ideas behind Theorem \ref{teo1} using a simpler equation. We consider the following equation:
$$
\partial_t f+f'\Lambda f+\Lambda f=0.
$$
Using pointwise methods as in \cite{cordoba2004maximum,cordoba2009maximum}, we can obtain the following bounds
$$
\ddt{ \av{f}_{W^{1,\infty}}}\leq0,
$$
for initial data such that
$$
\av{f_0}_{W^{1,\infty}}<1.
$$
Then, we conclude the \emph{a priori} estimates in the Lipschitz class. For equations in divergence form, such estimates would lead to the global existence of weak solution via a vanishing viscosity type argument \cite{CCGS12,granero2014global}. However, for equations in non-divergence form, it is not obvious how to translate the previous bound into the global existence of weak solutions. When comparing the Peskin and the Muskat problem we see that this is an additional challenge that is inherent to the Peskin problem. To overcome this difficulty, we perform an additional $H^1$ energy estimate. A careful study of the nonlinearity will give us the appropriate bounds. Indeed, we have that
\begin{align*}
-\int f''f'\Lambda f \dd s &=\frac{1}{2}\int (f')^2\Lambda f' \dd s=\frac{1}{2}\iint(f'(s))^2\frac{\left(f'(s)-f'(\sigma)\right)}{|s-\sigma|^2} \dd \sigma \dd s=\frac{1}{2}\iint(f'(\sigma))^2\frac{\left(f'(\sigma)-f'(s)\right)}{|s-\sigma|^2} \dd \sigma \dd s\\
&=\frac{1}{4}\iint((f'(s))^2-(f'(\sigma))^2)\frac{\left(f'(s)-f'(\sigma)\right)}{|s-\sigma|^2} \dd \sigma \dd s=\frac{1}{4}\iint(f'(s)+f'(\sigma))\frac{\left(f'(s)-f'(\sigma)\right)^2}{|s-\sigma|^2} \dd \sigma \dd s\\
&\leq \frac{\av{f'}_{L^\infty}}{2} \ \av{f'}_{\dot{H}^{1/2}}^2.
\end{align*}
Thus, both estimates combined lead to a bound in
$$
f\in L^2\pare{ \bra{0,T};H^{3/2} }.
$$
With this parabolic effect we have the strong convergence of $f', \Lambda f$ and this allow us to pass to the limit in the weak formulation. 

Then, the proof of Theorem \ref{teo1} of this work can be summarized as follows:
\begin{enumerate}
	\item Using pointwise methods we conclude that, for initial data close enough to the equilibrium, the solution decays in the Lipschitz norm. The purpose of Proposition \ref{prop:W1infty_enest} is to prove the previous claim. The decay in $W^{1,\infty}$ is a crucial point in the argument as it will allow to obtain the parabolic gain of regularity. Furthermore, the exponential decay of this norm ensures that the point $M(t)$ remains uniformly in a ball for every $0<t<\infty$.
	\item The decay in the Lipschitz norm is then used to find a global estimate in 
	$$
	L^2 \pare{ \bra{0,T} ;H^{3/2} }.
	$$
	\item We invoke the parabolic gain of regularity obtained before to conclude the strong convergence of the derivative.
\end{enumerate}

\section{Pointwise estimates for the $\Lambda$ operator} \label{sec3}
In this section we collect some pointwise estimates for the fractional Laplacian that will be used in the sequel and that may be of independent interest. We start with a lemma that compares the Lambda and the Hilbert transform:

\begin{lemma}\label{lem:monotonicity_lemma}
Let $ f$ be a smooth function and define $ \bar{s},\underline{s} \in \bS^1 $ such that 
\begin{align*}
f'\pare{\bar{s}} = \max_{s\in\mathbb{S}^1} f'(s), &&
f'\pare{\underline{s}} = \min_{s\in\mathbb{S}^1}f'(s).
\end{align*} 
Then
\begin{align*}
\Lambda f'\pare{\bar{s}} - \Lambda f\pare{\bar{s}} \geq 0, && 
\Lambda f'\pare{\underline{s}} - \Lambda f\pare{\underline{s}}\leq 0. 
\end{align*}
\end{lemma}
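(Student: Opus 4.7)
The plan is to rewrite $\Lambda f'\pare{\bar s} - \Lambda f\pare{\bar s}$ as a single integral on $\pare{0,\pi}$ whose integrand is manifestly nonnegative. The inequality at the minimum will then follow from the same calculation after a global sign flip. The whole argument rests on two standard kernel representations combined with the oddness of $\cot\pare{\alpha/2}$.

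First I would represent $\Lambda f'\pare{\bar s}$ using the singular integral form of $\Lambda$ (obtained by an integration by parts from $\cH f''$) and $\Lambda f\pare{\bar s} = \cH f'\pare{\bar s}$ using the $\cot$--kernel form. Symmetrizing each integrand by pairing $\alpha$ with $-\alpha$ on $\pare{-\pi,\pi}$---which regularizes the origin and removes the principal value, since $\cot$ is odd---and setting $u\pare{\alpha} = f'\pare{\bar s} - f'\pare{\bar s - \alpha}$ and $v\pare{\alpha} = f'\pare{\bar s} - f'\pare{\bar s + \alpha}$, I obtain
\begin{align*}
\Lambda f'\pare{\bar s} = \frac{1}{2\pi}\int_0^\pi \frac{u+v}{2\sin^2\pare{\alpha/2}}\,\dd \alpha, && \Lambda f\pare{\bar s} = \frac{1}{2\pi}\int_0^\pi \cot\pare{\alpha/2}\,\pare{v-u}\,\dd\alpha.
\end{align*}
By the definition of $\bar s$ one has $u, v\geq 0$ for every $\alpha$; in particular this already recovers the familiar fact that $\Lambda f'\pare{\bar s}\geq 0$.

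Next, subtracting the two identities and applying the elementary formulas $\frac{1}{2\sin^2\pare{\alpha/2}} = \frac{1}{1-\cos\alpha}$ and $\cot\pare{\alpha/2} = \frac{\sin\alpha}{1-\cos\alpha}$, the integrand rearranges as
\begin{equation*}
\Lambda f'\pare{\bar s} - \Lambda f\pare{\bar s} = \frac{1}{2\pi}\int_0^\pi \bra{ u\,\frac{1+\sin\alpha}{1-\cos\alpha} + v\,\frac{1-\sin\alpha}{1-\cos\alpha}}\dd \alpha.
\end{equation*}
On $\pare{0,\pi}$ both weights are nonnegative because $\av{\sin\alpha}\leq 1$ and $1-\cos\alpha > 0$, and together with $u, v\geq 0$ this forces the integrand to be pointwise nonnegative, giving the first inequality.

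The statement at the minimum $\underline s$ is obtained by exactly the same computation with $\tilde u = f'\pare{\underline s-\alpha} - f'\pare{\underline s}\geq 0$ and $\tilde v = f'\pare{\underline s+\alpha}-f'\pare{\underline s}\geq 0$; the same rearrangement then carries an overall minus sign, producing $\Lambda f'\pare{\underline s}-\Lambda f\pare{\underline s}\leq 0$. The only non-routine step of the plan is spotting the cancellation in the middle paragraph: separately, neither $\Lambda f\pare{\bar s}$ nor the symmetrized kernel of $\Lambda f'\pare{\bar s}$ has a sign-preserving expression in terms of $u$ and $v$, but the particular difference imposed by the identity $\Lambda = \cH\partial_s$ combines them into the positive kernels $\pare{1\pm\sin\alpha}/\pare{1-\cos\alpha}$, which is the algebraic heart of the lemma.
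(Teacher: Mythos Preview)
Your argument is correct and rests on the same algebraic identity as the paper's proof, but you take a slightly longer route. The paper does not symmetrize: it simply writes
\[
\Lambda f'(s) - \Lambda f(s) = \frac{1}{2\pi}\int_{-\pi}^{\pi} \frac{1+\sin\alpha}{2\sin^2(\alpha/2)}\,\bigl(f'(s)-f'(s-\alpha)\bigr)\,\dd\alpha,
\]
and observes that the kernel $\frac{1+\sin\alpha}{2\sin^2(\alpha/2)}$ is nonnegative on the \emph{full} interval $(-\pi,\pi)$, since $1+\sin\alpha\ge 0$ everywhere. Your folding onto $(0,\pi)$ splits this single nonnegative kernel into the pair $\frac{1\pm\sin\alpha}{1-\cos\alpha}$ acting on $u$ and $v$; this is exactly what one obtains by reflecting the paper's integrand. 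So the symmetrization is harmless but unnecessary: the key point, in both arguments, is that adding $\cot(\alpha/2)$ to $\frac{1}{2\sin^2(\alpha/2)}$ yields $\frac{1+\sin\alpha}{2\sin^2(\alpha/2)}\ge 0$. One small remark: your closing sentence says ``neither $\Lambda f(\bar s)$ nor the symmetrized kernel of $\Lambda f'(\bar s)$ has a sign-preserving expression'', but you had already (correctly) noted that $\Lambda f'(\bar s)\ge 0$ is immediate from $u,v\ge 0$; only $\Lambda f(\bar s)$ lacks a sign.
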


\begin{proof}
We know that
\begin{align*}
\Lambda f' \pare{s} = & \ \frac{1}{2\pi}  \int \frac{f'\pare{s} - f'\pare{s-\alpha}}{2\sin^2\pare{\alpha/2}}, \\
\Lambda f \pare{s} = & \  -\frac{1}{2\pi} \int \cot \pare{\alpha/2} \pare{f'\pare{s} - f'\pare{s-\alpha}}, 
\end{align*}
so that
\begin{equation*}
\Lambda f' \pare{s} - \Lambda f \pare{s} = \frac{1}{2\pi}\int \frac{1+\sin(\alpha)}{2\sin^2\pare{\alpha/ 2}} \ \pare{f'\pare{s} - f'\pare{s-\alpha}}, 
\end{equation*}
and the claim follows since the integration kernel is nonnegative and
\begin{align*}
f'\pare{\bar{s}} - f'\pare{\bar{s}-\alpha} \geq 0, &&
f'\pare{\underline{s}} - f'\pare{\underline{s}-\alpha} \leq 0 . 
\end{align*}
\end{proof}

Furthermore, we observe that for zero-mean functions we have the following Poincar\'e-type pointwise inequalities (see \cite{ascasibar2013approximate} for instance)
\begin{align}\label{eq:Lambda_Linfty}
f\pare{\bar{s}}\leq C \ \Lambda f\pare{\bar{s} }, && -f\pare{\underline{s}}\leq -C \ \Lambda f\pare{\underline{s} }. 
\end{align}
Similarly as in the proof of Lemma \ref{lem:monotonicity_lemma} we can prove the following result:

\begin{lemma}
\label{lem:monotonicity_lemma2}
Let $ f$ be a smooth function and define $ \bar{s},\underline{s} \in \bS^1 $ such that 
\begin{align*}
f'\pare{\bar{s}} = \max_{s\in\mathbb{S}^1} f'(s), &&
f'\pare{\underline{s}} = \min_{s\in\mathbb{S}^1}f'(s).
\end{align*} 
Let $ b=b\pare{\alpha} \geq 0 $ for every $ \alpha\in \bS^1 $ and let us define the operators
\begin{align*}
\Lambda_b f\pare{s} & = \frac{1}{2\pi}  \int \frac{b\pare{\alpha}}{2 \sin^2\pare{\alpha / 2}} \pare{f' \pare{s} - f' \pare{s-\alpha}} \dd\alpha, \\ 
\cH_b f\pare{s} & = -\frac{1}{2\pi}  \int \frac{b\pare{\alpha}}{\tan\pare{\alpha / 2}} \pare{f'\pare{s} - f'\pare{s-\alpha}}, 
\end{align*}
then we have
\begin{align*}
\Lambda_b f\pare{\bar{s}} - \cH_b f\pare{\bar{s}} \geq 0, &&
\Lambda_b f\pare{\underline{s}} - \cH_b f\pare{\underline{s}} \leq 0
\end{align*}
\end{lemma}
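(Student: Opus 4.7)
The plan is to mirror the proof of Lemma \ref{lem:monotonicity_lemma}: form the difference $\Lambda_b f - \cH_b f$ as a single integral whose kernel is manifestly non-negative, then invoke the sign of $f'(s) - f'(s-\alpha)$ at the extremal points $\bar{s}$ and $\underline{s}$ to conclude.

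Concretely, subtracting the defining formulas yields
\begin{equation*}
\Lambda_b f(s) - \cH_b f(s) = \frac{1}{2\pi} \int b(\alpha) \left[\frac{1}{2\sin^2(\alpha/2)} + \frac{1}{\tan(\alpha/2)}\right] \bigl(f'(s) - f'(s-\alpha)\bigr) \dd\alpha,
\end{equation*}
understood in the principal value sense exactly as in the proof of Lemma \ref{lem:monotonicity_lemma}. The bracket then collapses via the elementary trigonometric identity
\begin{equation*}
\frac{1}{2\sin^2(\alpha/2)} + \frac{\cos(\alpha/2)}{\sin(\alpha/2)} = \frac{1 + 2\sin(\alpha/2)\cos(\alpha/2)}{2\sin^2(\alpha/2)} = \frac{1+\sin\alpha}{2\sin^2(\alpha/2)},
\end{equation*}
which is pointwise non-negative on $\bS^1$ because $\sin\alpha \geq -1$. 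Combined with the standing hypothesis $b(\alpha)\geq 0$, the full integration kernel is therefore non-negative.

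The two inequalities now follow by pointwise evaluation. At $s=\bar{s}$ the maximality of $f'(\bar{s})$ gives $f'(\bar{s})-f'(\bar{s}-\alpha)\geq 0$ for every $\alpha\in\bS^1$, so the integrand has a definite non-negative sign and thus $\Lambda_b f(\bar{s}) - \cH_b f(\bar{s}) \geq 0$; the argument at $s=\underline{s}$ is identical with reversed sign. There is no substantive obstacle here, since the statement is a direct weighted version of Lemma \ref{lem:monotonicity_lemma}; the only routine check is to ensure that the two principal value integrals combine legitimately into one, which holds as soon as $b$ is bounded and $f$ is, say, $\cC^2$, so that $f'(s)-f'(s-\alpha)=O(\alpha)$ near $\alpha=0$ and the combined singular behavior is controlled exactly as in Lemma \ref{lem:monotonicity_lemma}.
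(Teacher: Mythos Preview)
Your proof is correct and follows exactly the approach the paper intends: the paper states that Lemma \ref{lem:monotonicity_lemma2} is proved ``similarly as in the proof of Lemma \ref{lem:monotonicity_lemma}'', and your argument is precisely that proof with the weight $b(\alpha)$ carried along, yielding the same non-negative kernel $\dfrac{1+\sin\alpha}{2\sin^2(\alpha/2)}$ multiplied by $b(\alpha)\geq 0$.
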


Finally, let us provide with an alternative expression for $ \Lambda f' $ when $ f $ is $ \cC^2\pare{\bS^1} $. This expression will be very useful when performing the pointwise estimates. We know that
\begin{equation*}
\Lambda f'\pare{s} = \frac{1}{2\pi}  \int \frac{f'\pare{s} - f'\pare{s-\alpha}}{2\sin^2 \pare{\alpha / 2}} \dd \alpha = \frac{1}{2\pi}  \int \frac{\partial_\alpha \bra{f'\pare{s} \alpha - \pare{ f\pare{s} - f\pare{s-\alpha} }}}{2\sin^2 \pare{\alpha / 2}} \dd \alpha. 
\end{equation*}
Integrating by parts and exploiting the regularity $ f \in \cC^2\pare{\bS^1} $, we obtain that
\begin{equation}
\label{eq:Lambda_alternative}
\Lambda f'\pare{s} = \frac{1}{2\pi} \int \frac{f'\pare{s}\alpha -\pare{f \pare{s} - f \pare{s-\alpha}}}{2\sin^3\pare{\alpha/2}} \cos\pare{\alpha/2} \ \dd\alpha. 
\end{equation}

 \section{\emph{A priori} estimates in $ W^{1, \infty} $}\label{sec:W1inftydecay}
Let us introduce some notation that will simplify the exposition. For a smooth function $h$ and any $ n\in \bN $ let us denote the application s $ t \mapsto \overline{s^n_t} $ and $ t \mapsto \underline{s^n_t} $ such that
\begin{align*}
 h^{\pare{ n }} \pare{\overline{s^n_t} , t} = \max _s \set{ h^{\pare{ n }} \pare{s , t}}=\max_s \set{ \partial_s^n h} \pare{s , t}, &&
 h^{\pare{ n }} \pare{\underline{s^n_t} , t} = \min_s  \set{h^{\pare{ n }} \pare{s , t}}=\min _s \set{ \partial_s^n h} \pare{s , t}. 
\end{align*}
Let us define the auxiliary functions
\begin{align}\label{eq:auxiliary_variables}
r\pare{s} = 1+h\pare{s}, && \theta\pare{s, s-\alpha} = h\pare{s}-h\pare{s-\alpha}, &&  \eta \pare{s, s-\alpha, \alpha} = h\pare{s} - h\pare{s-\alpha}\cos\alpha, 
\end{align}
thus we have that
\begin{align*}
1+h\pare{s-\alpha} = r-\theta , && h'\pare{s-\alpha} = -\partial_\alpha h\pare{s-\alpha} = \partial_\alpha \theta. 
\end{align*}
Finally, we denote
\begin{equation}
\label{eq:notation_bar_0}
\begin{aligned}
\bt & = \theta \pare{\overline{s^0_t}, \overline{s^0_t}-\alpha}, & \be & = \eta \pare{\overline{s^0_t}, \overline{s^0_t}-\alpha}, & \br & = r\pare{\overline{s^0_t}}\\
\ut & = \theta \pare{\underline{s^0_t}, \underline{s^0_t}-\alpha}, & \ue& = \eta \pare{\underline{s^0_t}, \underline{s^0_t}-\alpha}, & \ur & = r\pare{\underline{s^0_t}}. 
\end{aligned}
\end{equation} 
 
In the present section we prove that the unitary circumference $\gamma$ is globally stable under small $ W^{1, \infty} $ perturbations which are graphs on the unitary circle. The detailed statement is formulated in the following proposition:
 
 \begin{prop}\label{prop:W1infty_enest}
Let  $T^\star\in(0,\infty]$ and $h = h(s,t)$ be a $\cC \pare{ \bra{ 0,T^\star}; \cC^2 } $ solution of \eqref{eq:eveqh2} such that $ \left. h\right|_{t=0} = h_0 $. Then there exists a $ c_0 > 0 $ such that if $ \av{h_0}_{W^{1, \infty}}< c_0 $ the following inequality holds
 \begin{align*}
 \av{h\pare{t}}_{W^{1, \infty}}\leq \av{h_0}_{W^{1, \infty}}&& \forall \  0\leq t<T^\star.
 \end{align*} 
Furthermore, we have that there exists a $ \delta > 0 $ s.t.
\begin{equation}
\av{h'(t)} _{L^\infty}\leq \av{h'_0}_{L^\infty} e^{-\delta t}.\label{eq:exp_decay_derivative}
\end{equation}
 \end{prop}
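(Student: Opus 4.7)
The plan is to perform pointwise estimates at the extrema of $h$ and $h'$, exploiting the effective parabolic dissipation $-\tfrac{1}{4}\Lambda$ that emerges from the $h-M$ formulation (Section \ref{sec:linearized_h}) and using the smallness of $\av{h}_{W^{1, \infty}}$ to absorb the nonlinear remainder. Rewriting \eqref{eq:eveqh2} schematically as $\partial_t h + \tfrac{1}{4}\Lambda h = \cN\pare{h}$, by the very choice of the ODE \eqref{eq:M} the non-dissipative linear first-Fourier-mode contribution is cancelled, so that $\cN\pare{h}$ is at least quadratic in $h$ and its derivatives. Using the auxiliary variables \eqref{eq:auxiliary_variables}, a Taylor expansion shows that as soon as $\av{h}_{L^\infty}\leq c_0$, the denominator $\pare{1+h\pare{s}}^2+\pare{1+h\pare{s-\alpha}}^2-2\pare{1+h\pare{s}}\pare{1+h\pare{s-\alpha}}\cos\alpha$ is bounded below by $c\sin^2\pare{\alpha/2}$, so every kernel in \eqref{eq:eveqh2} is of order $\alpha^{-2}$ and the integrands remain locally integrable.

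\textbf{Step 1 (exponential decay of $\av{h'}_{L^\infty}$).} Differentiating \eqref{eq:eveqh2} in $s$ and evaluating at $\overline{s^1_t}$, the critical-point identity $h''\pare{\overline{s^1_t}, t}=0$ suppresses the most singular local contribution. Since $h'$ has zero mean, the Poincar\'e-type bound \eqref{eq:Lambda_Linfty}, combined with the alternative representation \eqref{eq:Lambda_alternative}, yields $\Lambda h'\pare{\overline{s^1_t}}\geq c\av{h'}_{L^\infty}$. Hence
\begin{equation*}
\ddt h'\pare{\overline{s^1_t}, t}+\tfrac{1}{4}\Lambda h'\pare{\overline{s^1_t}}\leq \partial_s \cN\pare{h}\pare{\overline{s^1_t}}.
\end{equation*}
To control the right-hand side, for terms carrying a $h''\pare{s-\alpha}$ factor I would integrate by parts in $\alpha$, trading one derivative for an extra $\alpha$-power at the singularity, and then exploit $h'\pare{\overline{s^1_t}}-h'\pare{\overline{s^1_t}-\alpha}\geq 0$ together with Lemma \ref{lem:monotonicity_lemma2} applied to the resulting positive-kernel operator $\Lambda_b$. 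The remaining contributions are expanded via \eqref{eq:auxiliary_variables} and the lower bound on the denominator, extracting factors of $\av{h}_{W^{1, \infty}}$. The outcome takes the form
\begin{equation*}
\partial_s \cN\pare{h}\pare{\overline{s^1_t}}\leq \cP\pare{\av{h}_{W^{1, \infty}}}\av{h}_{W^{1, \infty}}\ \Lambda h'\pare{\overline{s^1_t}},
\end{equation*}
so that for $c_0$ small the dissipation wins and $\ddt \pare{\max_s h'}\leq -\delta \max_s h'$. A mirror argument at $\underline{s^1_t}$ controls $-\min_s h'$, yielding \eqref{eq:exp_decay_derivative}.

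\textbf{Step 2 ($L^\infty$ bound on $h$).} An analogous but simpler pointwise evaluation of \eqref{eq:eveqh2} at $\overline{s^0_t}$ and $\underline{s^0_t}$, using Lemma \ref{lem:monotonicity_lemma} to give a favourable sign to the linear dissipation and observing that $\cN\pare{h}$ now carries an extra factor $\av{h}_{W^{1, \infty}}$ relative to $\Lambda h$, produces $\ddt \av{h\pare{t}}_{L^\infty}\leq 0$ under the smallness assumption. Combined with Step 1 this yields $\av{h\pare{t}}_{W^{1, \infty}}\leq \av{h_0}_{W^{1, \infty}}$, which by a standard continuation argument ensures that the smallness condition propagates for all $t\in\bra{0, T^\star}$, so that the estimates close globally.

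The main obstacle is the bookkeeping in Step 1: the integrals in \eqref{eq:eveqh2} mix logarithmic and Hilbert-type kernels with $h$-dependent denominators, and after differentiation they contain $h''\pare{s-\alpha}$ factors whose naive pointwise bound would exceed the available dissipation. The essential trick is to recast each such term, via integration by parts in $\alpha$ and the criticality $h''\pare{\overline{s^1_t}}=0$, as the output of a positive-kernel operator $\Lambda_b$ with $b=O\pare{\av{h}_{W^{1, \infty}}}$, to which Lemma \ref{lem:monotonicity_lemma2} applies with the correct sign at the extremum.
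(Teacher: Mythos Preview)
Your proposal is correct and follows essentially the same pointwise strategy as the paper: decompose the right-hand side, extract the linear piece $-\tfrac14\Lambda h'$, integrate the $h''(s-\alpha)$ contributions by parts in $\alpha$, and absorb the nonlinear remainder into the dissipation via the sign condition $\btp\geq 0$ and the Poincar\'e bound \eqref{eq:Lambda_Linfty}. Two small clarifications relative to the paper's execution: (i) the critical-point identity $h''(\overline{s^1_t})=0$ is not what tames the most singular terms --- the paper integrates by parts using $h''(s-\alpha)=-\partial_\alpha h'(s-\alpha)$ \emph{before} evaluating at the extremum, so the reduction to first-order integrands holds for all $s$, and criticality enters only afterwards through $\btp\geq 0$; (ii) the $L^\infty$ inequality for $h$ does not close on its own, since its error term carries a factor $\av{h'}_{L^\infty}$ rather than $\av{h}_{L^\infty}$ --- the paper sums the $L^\infty$ and $\dot W^{1,\infty}$ differential inequalities and then absorbs this term into the $\Lambda h'$ dissipation, yielding $\ddt\av{h}_{W^{1,\infty}}\leq 0$ directly. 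The heavy lifting, as you anticipate, is the bookkeeping verifying that the many order-one linear pieces (e.g.\ the $\pm h'/4$ from $\cJ_3,\cJ_4$ and the $\int\btp\cos\alpha\,\dd\alpha$ terms against $\gamma'\cdot\dot M$) cancel exactly.
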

The proof of this proposition is based on pointwise methods as in \cite{cordoba2009maximum,cordoba2014confined}. In particular, we will obtain the inequality 
$$
\ddt \av{h(t)}_{W^{1,\infty}}< 0\;\text{ a.e. in $t$},
$$
for small enough initial data. Integrating in time will lead to the maximum principle for this norm. Once the maximum principle is obtained, we can even obtain the exponential decay of the norm.

\subsection{Estimates in $ L^\infty $} \label{sec:estLinfty}
We consider now \eqref{eq:eveqh2}. With the notation introduced in \eqref{eq:auxiliary_variables} combined with the elementary identity $ 1-\cos\alpha = 2\sin^2\pare{\alpha/2} $, we can deduce that
$$
\partial_t h\pare{s} + \gamma\pare{s}\cdot   \ddt{M}= \ J_1 \pare{s} + J_2 \pare{s}  + J_3 \pare{s},
$$
where
\begin{equation}
\label{eq:J's}
\begin{aligned}
J_1 = & \  -\frac{ 1}{8\pi} \int \log \pare{4r\pare{r-\theta} \sin^2\pare{\alpha/2} + \theta^2} \partial_\alpha^2\bra{\pare{r-\theta} \cos\alpha } \dd \alpha, \\
J_2 = & \ \frac{1}{4\pi} \int \frac{\bra{2r\sin^2\pare{\alpha / 2} + \theta \cos\alpha} \bra{2r\sin^2\pare{\alpha/2}-\theta}}{4r\pare{r-\theta} \sin^2\pare{\alpha/2} +\theta^2} \pare{\partial_\alpha^2 \theta +\pare{r-\theta}} \dd \alpha , \\
J_3 = & \ \frac{r}{2\pi} \int \frac{\bra{2r\sin^2\pare{\alpha/2} + \theta\cos\alpha}\sin(\alpha)}{4r\pare{r-\theta}\sin^2\pare{\alpha / 2} + \theta^2} \ \partial_\alpha \theta \dd \alpha. 
\end{aligned}
\end{equation}

\subsubsection*{Bound of $ J_1 $}
Let us decompose
 \begin{equation*}
 J_1 = J_{1, \gamma} + J_{1, 1} + J_{1, 2} + J_{1, 3}, 
 \end{equation*}
 where  
 \begin{align*}
 J_{1, \gamma} & = \frac{ 1}{8\pi} \int \log \pare{4r\pare{r-\theta} \sin^2\pare{\alpha/2} + \theta^2} \cos\alpha \  \dd \alpha
 , \\
 J_{1, 1} & =   \frac{1}{8 \pi} \int \frac{ 4r \partial_\alpha\theta \ \sin^2\pare{\alpha/2} }{4r\pare{r-\theta}\sin^2\pare{\alpha/2}+ \theta^2} \  \partial_\alpha\eta \ \dd \alpha , \\
  J_{1, 2} & = -\frac{1}{8\pi} \int \frac{ 2 r\pare{r-\theta}\sin(\alpha)   }{4r\pare{r-\theta}\sin^2\pare{\alpha/2}+ \theta^2} \  \partial_\alpha\eta \  \dd \alpha ,  \\
  J_{1, 3} & =  -\frac{1}{8 \pi} \int \frac{2\theta\partial_\alpha\theta}{4r\pare{r-\theta}\sin^2\pare{\alpha/2}+ \theta^2} \  \partial_\alpha \eta \  \dd \alpha . 
 \end{align*}
 
We analyze now the term $ J_{1, \gamma} $. We use the Taylor expansion of the logarithm to find that
\begin{align*}
\log\pare{4r\pare{r-\theta}\sin^2\pare{\alpha/2} + \theta^2} &= \log\pare{1-\frac{\theta}{r}+\frac{\theta^2}{4r^2\sin^2\pare{\alpha/2}}} + \log\pare{4r^2\sin^2\pare{\alpha/2}}, \\
& \leq \log\pare{4r^2\sin^2\pare{\alpha/2}} - \frac{\theta}{r} + \cP \pare{\av{h'}_{ L^\infty }^2}.
\end{align*}
As a consequence of the previous estimate we find the following bound for  $ J_{1, \gamma} \pare{\overline{s^0_t}} $: 
\begin{equation}
\label{eq:J1gamma}
J_{1, \gamma}\pare{\overline{s^0_t}} \leq  \frac{1}{4}\frac{1}{2\pi} \int \log\pare{4 \br^2  \sin^2\pare{\alpha/2}} \cos\alpha \ \dd \alpha   -\frac{1}{4}\frac{1}{2\pi}  \int \bt\cos\alpha \dd \alpha    + \cP\pare{\av{h}_{W^{1, \infty}}}  \av{h}_{W^{1, \infty}} \av{h'}_{L^{\infty}}, 
\end{equation}
so that in \eqref{eq:J1gamma} we isolate static, linear end nonlinar contributions to the evolution of $|h|_{L^\infty}$.

Since the term $ J_{1, 1}\pare{\overline{s^0_t}} $ is not a singular integral it can be bounded straightforwardly and we conclude that
\begin{equation}\label{eq:J11}
J_{1, 1} \pare{\overline{s^0_t}} \leq \cP\pare{\av{h}_{W^{1, \infty}}}\av{h}_{W^{1, \infty}} \av{h'}_{L^{\infty}}. 
\end{equation}

 We start studying the term $ J_{1, 2} $. We remark that
 \begin{align*}
 J_{1, 2} = & \  - \frac{1}{4} \frac{1}{2 \pi}\int \pare{ 1 - \frac{ \theta^2 }{4r\pare{r-\theta}\sin^2\pare{\alpha/2}+ \theta^2} } \ \cot \pare{\alpha / 2}  \partial_\alpha\eta \  \dd \alpha  , \\
  = & \  J_{1,2, \RN{1}} +  J_{1,2, \RN{2}}  . 
 \end{align*}
The smallness of $h$ and the positivity of $\bt $ allow us to deduce that
 \begin{equation*}
 J_{1, 2, \RN{2}} \pare{\overline{s^0_t}}=\frac{1}{8 \pi}\int \frac{ \bar{\theta} }{4\bar{r}\pare{\bar{r}-\bar{\theta}}\sin^2\pare{\alpha/2}+ \theta^2} \ \frac{\bar{\theta}}{\tan \pare{\alpha / 2}}  \partial_\alpha\bar{\eta} \  \dd \alpha   \leq \av{h}_{W^{1, \infty}} \cP\pare{\av{h}_{W^{1, \infty}}}\int \frac{\bt}{2\sin^2\pare{\alpha/2}}\dd \alpha .
 \end{equation*} 
 
 Thus, considering that 
 $$ 
 \eta = \theta +2h\pare{s-\alpha}\sin^2\pare{\alpha / 2}, 
 $$ we obtain  
\begin{equation} \label{eq:J12}
J_{1, 2} \pare{\overline{s^0_t}} \leq
  - \frac{1}{4}  \frac{1}{2\pi } \int \frac{\bt}{2\sin^2\pare{\alpha/2}} \dd \alpha  - \frac{1}{4}  \frac{1}{2\pi } \int h\pare{\overline{s^0_t}-\alpha}\dd\alpha
 + \av{h}_{W^{1, \infty}} \cP\pare{\av{h}_{W^{1, \infty}}}  \int \frac{\bt}{2\sin^2\pare{\alpha/2}} \dd \alpha.
\end{equation}

We rearrange the term $ J_{1, 3}\pare{\overline{s^0_t}} $
\begin{align*}
  J_{1, 3}  &=  - \frac{1}{4} \frac{1}{2 \pi} \int \frac{\theta}{2\sin^2(\alpha/2)} \frac{4\sin^2(\alpha/2)\partial_\alpha\theta}{4r\pare{r-\theta}\sin^2\pare{\alpha/2}+ \theta^2} \  \partial_\alpha \eta \  \dd \alpha \\
\end{align*}

The last term $ J_{1, 3}\pare{\overline{s^0_t}} $ can easily be controlled as follows:
\begin{equation}
\label{eq:J13}
\av{J_{1, 3}\pare{\overline{s^0_t}}} \leq \av{h}_{W^{1, \infty}} \cP\pare{\av{h}_{W^{1, \infty}}}  \int \frac{\bt}{2\sin^2\pare{\alpha /2}}. 
\end{equation}

We combine the estimates \eqref{eq:J1gamma}, \eqref{eq:J11}, \eqref{eq:J12} and \eqref{eq:J13} and obtain the bound 
\begin{multline}\label{eq:J1}
J_1\pare{\overline{s^0_t}} \leq  - \frac{1}{4}  \frac{1}{2\pi }  \int \frac{\bt }{2\sin^2\pare{\alpha/2}} \dd \alpha  - \frac{1}{4}  \frac{1}{2\pi }  \int h\pare{\overline{s^0_t}-\alpha}\dd \alpha -\frac{1}{4}\frac{1}{2\pi}  \int \bt\cos\alpha \dd \alpha  
   \\
 + \av{h}_{W^{1, \infty}} \cP\pare{\av{h}_{W^{1, \infty}}}  \int \frac{\bt}{2\sin^2\pare{\alpha/2}} \dd \alpha +   \cP\pare{\av{h}_{W^{1, \infty}}} \av{h}_{W^{1, \infty}}\av{h'}_{L^{\infty}}  \\
  + \frac{1}{4} \frac{1}{2 \pi} \int \log\pare{4 \br^2  \sin^2\pare{\alpha/2}} \cos\alpha \ \dd \alpha.
\end{multline}

\subsubsection*{Bound of $ J_2 $} \label{sec:J2}

We study now the term $ J_2 $. In order to do that we decompose it as
\begin{equation*}
J_2 = J_{2, \gamma} + J_{2, 1} + J_{2, 2} , 
\end{equation*}
where 
\begin{align*}
J_{2, \gamma} & =  \ \frac{1}{4\pi}  \int \frac{\bra{2r\sin^2\pare{\alpha / 2} + \theta \cos\alpha} \bra{2r\sin^2\pare{\alpha/2}-\theta}}{4r\pare{r-\theta} \sin^2\pare{\alpha/2} +\theta^2} \ r \  \dd \alpha , \\
J_{2, 1} & = 
\ -\frac{1}{4\pi}  \int \frac{\bra{2r\sin^2\pare{\alpha / 2} + \theta \cos\alpha} \bra{2r\sin^2\pare{\alpha/2}-\theta}}{4r\pare{r-\theta} \sin^2\pare{\alpha/2} +\theta^2} \ \theta \  \dd \alpha ,
\\
J_{2, 2} & = 
\ - \frac{1}{4\pi}  \int \partial_\alpha \pare{ \frac{\bra{2r\sin^2\pare{\alpha / 2} + \theta \cos\alpha} \bra{2r\sin^2\pare{\alpha/2}-\theta}}{4r\pare{r-\theta} \sin^2\pare{\alpha/2} +\theta^2} } \ \partial_\alpha \theta \  \dd \alpha .
\end{align*}

We start analyzing the term $ J_{2, \gamma} $. We write this term as
\begin{equation*}
J_{2, \gamma} =  \frac{1}{4\pi} \int \frac{4r^2 \sin^2\pare{\alpha / 2}}{4r\pare{r-\theta} \sin^2\pare{\alpha/2} +\theta^2} \bra{ \pare{r-\theta}\sin^2\pare{\alpha / 2} - \frac{\theta^2\cos\pare{\alpha}}{4r\sin^2\pare{\alpha/2}}}  \dd \alpha .
\end{equation*}
We observe that
\begin{align*}
\frac{ 4r^2 \sin^2\pare{\alpha / 2}}{4r^2\sin^2\pare{\alpha/2} -4r\theta \sin^2\pare{\alpha/2} +\theta^2}  & = \frac{ 1}{1-\frac{\theta }{r } +\frac{\theta^2}{4r^2\sin^2\pare{\alpha/2} }}  , \\
&= 1 +\frac{ \frac{\theta }{r } -\frac{\theta^2}{4r^2\sin^2\pare{\alpha/2} }}{1-\frac{\theta }{r } +\frac{\theta^2}{4r^2\sin^2\pare{\alpha/2} }}\\
&=1 +\sum_{\ell=1}^\infty\left(\frac{\theta }{r } -\frac{\theta^2}{4r^2\sin^2\pare{\alpha/2} }\right)^\ell\\
\frac{\theta^2\cos\pare{\alpha}}{4r \sin^2\pare{\alpha/2}} & \leq C\av{h'}_{L^\infty}^2,
\end{align*}
where the convergence of the series is ensured due to the smallness of $h$ in $W^{1,\infty}$ and $C$ is a universal constant that may change from line to line. Then, noticing that
$$
\left(1+\frac{\theta }{r }\right)(r-\theta)=\frac{r^2-\theta^2}{r},
$$
we deduce that
\begin{equation}
\label{eq:J2gamma}
J_{2, \gamma}\pare{\overline{s^0_t}} \leq  \frac{\br }{2} \frac{1}{2\pi} \int \sin^2\pare{\alpha / 2} \dd \alpha 
 +\cP\pare{\av{h}_{W^{1, \infty}}}\av{h}_{W^{1, \infty}} \av{h'}_{L^\infty} .  
\end{equation}

Next we study the term $ J_{2, 1} $. Using similar computations as the ones performed for the term $ J_{2, \gamma} $ we can reformulate it as
\begin{align*}
J_{2, 1}= & - \frac{1}{4\pi}  \int \frac{4r^2 \sin^2\pare{\alpha / 2}}{4r\pare{r-\theta} \sin^2\pare{\alpha/2} +\theta^2} \bra{ \pare{1-\frac{\theta}{r}}\theta \sin^2\pare{\alpha / 2} - \frac{\theta^3\cos\pare{\alpha/2}}{4r^2\sin^2\pare{\alpha/2}}}  \dd \alpha. 
\end{align*}
From the previous expression we can deduce the estimate
\begin{equation}\label{eq:J21}
\begin{aligned}
J_{2, 1} \pare{\overline{s^0_t}} \leq & \   -\frac{1}{4\pi} \int \bt \sin^2\pare{\alpha / 2} \dd \alpha  
 + \cP\pare{\av{h}_{W^{1, \infty}}} \av{h}_{W^{1, \infty}} \av{h'}_{L^\infty} , \\
\end{aligned}
\end{equation}

At last we study the term $ J_{2, 2} $. We decompose it as
\begin{equation*}
J_{2, 2} = J_{2, 2, \RN{1}} + J_{2, 2, \RN{2}} + J_{2, 2, \RN{3}}, 
\end{equation*}
where
\begin{align*}
J_{2, 2, \RN{1}} = & \   \frac{1}{4\pi}  \int \frac{-4r \partial_\alpha \theta \sin^2\pare{\alpha/2} + 2r\pare{r-\theta}\sin(\alpha) + 2\theta\partial_\alpha \theta}{\pare{4r\pare{r-\theta}\sin^2\pare{\alpha / 2} + \theta^2 }^2} \pare{2r\sin^2\pare{\alpha / 2} + \theta \cos\alpha}\pare{2r\sin^2\pare{\alpha / 2} - \theta } \partial_\alpha \theta \ \dd \alpha , \\
J_{2, 2, \RN{2}} = & \  - \frac{1}{4\pi} \int  \frac{ 
\pare{r\sin(\alpha) + \partial_\alpha \theta \cos\alpha - \theta \sin \alpha}\pare{2r\sin^2\pare{\alpha / 2} - \theta }
}{4r\pare{r-\theta}\sin^2\pare{\alpha / 2} + \theta^2 } \ \partial_\alpha \theta \ \dd \alpha , \\
J_{2, 2, \RN{3}} = & \  - \frac{1}{4\pi}  \int \frac{ \pare{2r\sin^2\pare{\alpha / 2} + \theta \cos\alpha}\pare{ r\sin(\alpha) - \partial_\alpha \theta } }{4r\pare{r-\theta}\sin^2\pare{\alpha / 2} + \theta^2 } \ \partial_\alpha \theta \ \dd \alpha . 
\end{align*}

Let us consider at first $ J_{2, 2, \RN{1}}\pare{\overline{s^0_t}} $. 
Recalling
$$
\frac{\bra{2r\sin^2\pare{\alpha / 2} + \theta \cos\alpha} \bra{2r\sin^2\pare{\alpha/2}-\theta}}{4r\pare{r-\theta} \sin^2\pare{\alpha/2} +\theta^2}=\frac{4r^2 \sin^2\pare{\alpha / 2}}{4r\pare{r-\theta} \sin^2\pare{\alpha/2} +\theta^2} \bra{ \pare{1-\frac{\theta}{r}}\sin^2\pare{\alpha / 2} - \frac{\theta^2\cos\pare{\alpha}}{4r^2\sin^2\pare{\alpha/2}}},
$$
using similar computations as before, we can compute 
\begin{align*}
\frac{ \pare{2r\sin^2\pare{\alpha / 2} + \theta \cos\alpha}\pare{2r\sin^2\pare{\alpha / 2} - \theta } 	}{(4r\pare{r-\theta}\sin^2\pare{\alpha / 2} + \theta^2)^2 }
 &= 
\left[1 +\sum_{\ell=1}^\infty\left(\frac{\theta }{r } -\frac{\theta^2}{4r^2\sin^2\pare{\alpha/2} }\right)^\ell\right]\bra{ \pare{1-\frac{\theta}{r}} - \frac{\theta^2\cos\pare{\alpha}}{4r^2\sin^4\pare{\alpha/2}}}\\
&\quad\times\frac{1}{4r\pare{r-\theta} + \frac{\theta^2}{\sin^2\pare{\alpha / 2}}}.
\end{align*}

Then, using the positivity of $\bar{\theta}$, we have that
\begin{equation*}
 J_{2, 2, \RN{1}}\pare{\overline{s^0_t}} 
 \leq  -  \frac{1}{2}\frac{1}{4\pi}  \int \bt \ \cos\alpha \  \dd\alpha  +  \av{h}_{W^{1, \infty}}\cP\pare{\av{h}_{W^{1, \infty}} }  \int \frac{\bt}{2\sin^2\pare{\alpha / 2}} \dd\alpha  + \cP\pare{\av{h}_{W^{1, \infty}} }  \av{h}_{W^{1, \infty}} \av{h'}_{L^\infty} . 
\end{equation*}
Very similar bounds hold for  $ J_{2, 2, \RN{2}}\pare{\overline{s^0_t}} $ and  $ J_{2, 2, \RN{3}}\pare{\overline{s^0_t}} $, namely
\begin{align*}
J_{2, 2, \RN{2}}\pare{\overline{s^0_t}} \leq& \   \frac{1}{2}\frac{1}{4 \pi} \int  \bt \ \cos \alpha \ \dd \alpha  +  \av{h}_{W^{1,\infty}} \cP \pare{ \av{h}_{W^{1, \infty}}} \int \frac{\bt}{2\sin^2\pare{\alpha / 2}} \dd \alpha  + \cP\pare{\av{h}_{W^{1, \infty}}} \av{h}_{W^{1, \infty}} \av{h'}_{L^\infty} , \\
J_{2, 2, \RN{3}}\pare{\overline{s^0_t}} \leq & \  \  \frac{1}{2}\frac{1}{4 \pi} \int  \bt \ \cos \alpha \ \dd \alpha  +  \av{h}_{W^{1,\infty}} \cP \pare{ \av{h}_{W^{1, \infty}}} \int \frac{\bt}{2\sin^2\pare{\alpha / 2}} \dd \alpha  + \cP\pare{\av{h}_{W^{1, \infty}}} \av{h}_{W^{1, \infty}} \av{h'}_{L^\infty}  , 
\end{align*}
and so we obtain that
\begin{equation}\label{eq:J22}
J_{2, 2}\pare{\overline{s^0_t}} \leq  \   \frac{1}{2}\frac{1}{4 \pi}  \int  \bt \ \cos \alpha \ \dd \alpha  +  \av{h}_{W^{1,\infty}} \cP \pare{ \av{h}_{W^{1, \infty}}} \int \frac{\bt}{2\sin^2\pare{\alpha / 2}} \dd \alpha   + \cP\pare{\av{h}_{W^{1, \infty}}} \av{h}_{W^{1, \infty}} \av{h'}_{L^\infty}  . 
\end{equation}

We combine the estimates \eqref{eq:J2gamma}, \eqref{eq:J21} and \eqref{eq:J22} and deduce that
\begin{multline}\label{eq:J2}
J_2\pare{\overline{s^0_t}} \leq \frac{\br }{2} \frac{1}{2\pi} \int \sin^2\pare{\alpha / 2} \dd \alpha -\frac{1}{4\pi} \int \bt \sin^2\pare{\alpha / 2} \dd \alpha  + \frac{1}{4}\frac{1}{2 \pi}  \int  \bt \ \cos \alpha \ \dd \alpha  \\
+ \av{h}_{W^{1,\infty}} \cP \pare{ \av{h}_{W^{1, \infty}}} \int \frac{\bt}{2\sin^2\pare{\alpha / 2}} \dd \alpha +  \cP\pare{\av{h}_{W^{1, \infty}}} \av{h}_{W^{1, \infty}} \av{h'}_{L^\infty}  . 
\end{multline}

\subsubsection*{Bound of $ J_3 $}
We decompose $ J_3 $ as
\begin{equation*}
J_3 = J_{3 , 1} + J_{3, 2}, 
\end{equation*}
with
\begin{align*}
J_{3 , 1} & = \frac{r}{2\pi}  \int \frac{2r \ \sin^2 \pare{\alpha / 2} \sin(\alpha)}{4r \pare{r-\theta}\sin^2\pare{\alpha / 2} + \theta^2} \ \partial_\alpha \theta \ \dd \alpha ,  \\
J_{3 , 2} & = \frac{r}{2\pi}  \int \frac{ \theta \cos(\alpha)\sin(\alpha)}{4r \pare{r-\theta}\sin^2\pare{\alpha / 2} + \theta^2} \ \partial_\alpha \theta \ \dd \alpha. 
\end{align*}

Using a Taylor expansion we know that
\begin{equation}\label{eq:J31}
J_{3 , 1}\pare{\overline{s^0_t}} = \frac{1}{2\pi} \int \left[1 +\sum_{\ell=1}^\infty\left(\frac{\theta }{r } -\frac{\theta^2}{4r^2\sin^2\pare{\alpha/2} }\right)^\ell\right]\frac{\sin(\alpha)\partial_\alpha \theta}{2} \ \dd\alpha ,
\end{equation}
from which we easily deduce the bound
\begin{equation*}
J_{3 , 1}  \pare{\overline{s^0_t}} \leq  - \frac{1}{2} \frac{1}{2\pi} \int \bt \cos(\alpha) \ \dd\alpha  + \av{h}_{W^{1, \infty}} \cP\pare{\av{h}_{W^{1, \infty}}} \int \frac{\bt}{2\sin^2\pare{\alpha / 2}} \dd \alpha. 
\end{equation*}

The term $ J_{3, 2} $ can be handled similarly and, in fact,
\begin{equation*}
J_{3 , 2 }  \pare{\overline{s^0_t}} \leq \av{h}_{W^{1, \infty}} \cP\pare{\av{h}_{W^{1, \infty}}} \int \frac{\bt}{2\sin^2\pare{\alpha / 2}} \dd \alpha.
\end{equation*}
Collecting the previous estimates we deduce the desired bound for $ J_3 $
\begin{equation}\label{eq:J3}
J_{3}  \pare{\overline{s^0_t}} \leq    - \frac{1}{2} \frac{1}{2\pi} \int \bt \cos(\alpha) \ \dd\alpha  + \av{h}_{W^{1, \infty}} \cP\pare{\av{h}_{W^{1, \infty}}} \int \frac{\bt}{2\sin^2\pare{\alpha / 2}} \dd \alpha. 
\end{equation}

\subsubsection*{The equation for the evolution of $|h|_{L^\infty}$}
We sum the inequalities \eqref{eq:J1}, \eqref{eq:J2} and \eqref{eq:J3} and obtain the bound 
\begin{multline}\label{eq:PD1}
J_{1}  \pare{\overline{s^0_t}} + J_{2}  \pare{\overline{s^0_t}} + J_{3}  \pare{\overline{s^0_t}} \leq \frac{1}{4 \pi} \int \log\pare{2 \br  \sin \pare{\alpha/2}} \cos(\alpha) \ \dd \alpha + \frac{\br }{4 \pi}  \int \sin^2\pare{\alpha / 2} \dd \alpha  \\
-\frac{1}{4}\frac{1}{2\pi} \bra{1-\av{h}_{W^{1, \infty}}\cP\pare{\av{h}_{W^{1, \infty}}}} \int \frac{\bt}{2\sin^2\pare{\alpha / 2}} \dd\alpha
-\frac{1}{4} \frac{1}{2\pi} \int \bt\pare{1+\cos(\alpha)}\dd\alpha \\
-\frac{1}{4} \frac{1}{2\pi} \int h\pare{ \overline{s^0_t} -\alpha} \dd \alpha  + \cP\pare{\av{h}_{W^{1, \infty}}} \av{h}_{W^{1, \infty}} \av{h'}_{L^\infty} . 
\end{multline}
Using pointwise methods as in \cite{cordoba2009maximum,cordoba2014confined} we have that
$$
\ddt\max_{s\in\mathbb{S}^1}\set{h(s,t) }=\partial_t h\pare{ \overline{s^0_t} } \ \text{ a.e. }
$$

Recalling equation \eqref{eq:eveqh2}, we obtain that
\begin{multline*}
\ddt \max_{s\in\mathbb{S}^1}\{h(s,t)\} + \gamma\pare{\overline{s^0_t}} \cdot \ddt M \leq \frac{1}{4 \pi} \int \log\pare{2 \br  \sin \pare{\alpha/2}} \cos(\alpha) \ \dd \alpha + \frac{\br }{4 \pi}  \int \sin^2\pare{\alpha / 2} \dd \alpha  \\
-\frac{1}{4}\frac{1}{2\pi} \bra{1-\av{h}_{W^{1, \infty}}\cP\pare{\av{h}_{W^{1, \infty}}}}  \int \frac{\bt}{2\sin^2\pare{\alpha / 2}} \dd\alpha
-\frac{1}{4} \frac{1}{2\pi} \int \bt\pare{1+\cos(\alpha)}\dd\alpha \\
-\frac{1}{4} \frac{1}{2\pi} \int h\pare{ \overline{s^0_t} -\alpha} \dd \alpha  + \cP\pare{\av{h}_{W^{1, \infty}}} \av{h}_{W^{1, \infty}} \av{h'}_{L^\infty}. 
\end{multline*}
Since
\begin{align*}
\gamma\pare{ s } \cdot \ddt M  & = \frac{1}{4} \frac{1}{2\pi} \int h\pare{ s  - \alpha}\cos(\alpha) \ \dd\alpha , & \forall  \ s\in \bS^1, \\
 \frac{1}{4 \pi} \int \log\pare{2 \br  \sin \pare{\alpha/2}} \cos(\alpha) \ \dd \alpha + \frac{\br }{4 \pi}  \int \sin^2\pare{\alpha / 2} \dd \alpha& = \frac{h\pare{\overline{s^0_t}}}{4},  \\
-\frac{1}{4}\frac{1}{2\pi} \int \bt \ \dd\alpha -\frac{1}{4} \frac{1}{2\pi} \int h\pare{ \overline{s^0_t} -\alpha} \dd \alpha  & = -\frac{h\pare{\overline{s^0_t}}}{4}  ,  \\
-\frac{1}{4}\frac{1}{2\pi} \int \bt \cos(\alpha) \ \dd\alpha & = \frac{1}{4}\frac{1}{2\pi} \int h\pare{\overline{s^0_t} -\alpha}\cos(\alpha) \ \dd\alpha, 
\end{align*}
we obtain that the evolution equation for $\displaystyle\max_{s\in\mathbb{S}^1}\{h(s,t)\}$ can be estimated as
\begin{equation*}
\ddt h\pare{\overline{s^0_t}} \leq -\frac{1}{4}\frac{1}{2\pi} \bra{1-\av{h}_{W^{1, \infty}}\cP\pare{\av{h}_{W^{1, \infty}}}}  \int \frac{\bt}{2\sin^2\pare{\alpha / 2}} \dd\alpha 
+ \cP\pare{\av{h}_{W^{1, \infty}}} \av{h}_{W^{1, \infty}} \av{h'}_{L^\infty} .
\end{equation*}
We can perform the same computations for the quantity
$$
0<-\min_{s\in\mathbb{S}^1}\{h(s,t)\}=-h\pare{\underline{s^0_t}},
$$
and obtain the bound
\begin{equation*}
- \ddt \min_{s\in\mathbb{S}^1}\{h(s,t)\} \leq -\frac{1}{4}\frac{1}{2\pi} \bra{1-\av{h}_{W^{1, \infty}}\cP\pare{\av{h}_{W^{1, \infty}}}}  \int \frac{ - \ut}{2\sin^2\pare{\alpha / 2}} \dd\alpha 
+ \cP\pare{\av{h}_{W^{1, \infty}}} \av{h}_{W^{1, \infty}} \av{h'}_{L^\infty} .
\end{equation*}

As a consequence we have that the time-evolution of $ \av{h\pare{t}}_{L^\infty} = \max \set{ h\pare{\overline{s^0_t}, t} , \ - h\pare{\underline{s^0_t}, t} } $ is given by
\begin{multline}
\ddt \av{h}_{L^\infty} \leq -\frac{1}{4}\frac{1}{2\pi} \bra{1-\av{h}_{W^{1, \infty}}\cP\pare{\av{h}_{W^{1, \infty}}}} \max\set{\int \frac{\bt}{2\sin^2\pare{\alpha/2}}  \dd \alpha ,  \int \frac{ -\ut }{2\sin^2\pare{\alpha/2}}  \dd \alpha} \\
+ \cP\pare{\av{h}_{W^{1, \infty}}} \av{h}_{W^{1, \infty}} \av{h'}_{L^\infty} .\label{eq:control_h_in_Linfty}
\end{multline}

\subsection{Estimates in $ W^{1,\infty}$}  We have to find the evolution equation for $h'$. In order to do that we differentiate now $ J_1 $, integrate by parts in $\alpha$ and use
\begin{equation*}
\partial_\alpha^2 \bra{\cos \alpha \ \pare{1+h\pare{s-\alpha}}} = -\partial_\alpha \bra{\sin \alpha \ \pare{1+h\pare{s-\alpha}}} - \partial_\alpha \bra{\cos(\alpha) \ h'\pare{s-\alpha}}, 
\end{equation*}
 to obtain
\begin{align}
J_1'\pare{s} = & \  \frac{1}{2}\frac{1}{2\pi} \int \Bigg\{ \frac{\pare{1+h\pare{s}}h'\pare{s} + \pare{1+h\pare{s -\alpha}}h'\pare{s -\alpha} - \pare{h'\pare{s}\pare{1+h\pare{s-\alpha}} + \pare{1+h\pare{s}} h'\pare{s-\alpha}}\cos \alpha }{  
\pare{1 + h\pare{s}}^2 + \pare{1 + h\pare{s-\alpha}}^2  - 2 \pare{1 + h\pare{s}}\pare{1 + h\pare{s-\alpha}} \cos(\alpha)} \nonumber\\
& \qquad \qquad \times  \ \partial_\alpha \bra{ \big. \sin(\alpha) \pare{1 + h\pare{s-\alpha}} } \Bigg\} \dd \alpha \nonumber\\
& \ -  \frac{1}{2}\frac{1}{2\pi} \int   \Bigg\{ \frac{
-\pare{1+h\pare{s}}  h'\pare{s} +  h'\pare{s} \pare{1+h \pare{s-\alpha}} \cos \alpha -\pare{1+h\pare{s}}\pare{1+h\pare{s-\alpha}} \sin \alpha
 }{ 
\pare{1 + h\pare{s}}^2 + \pare{1 + h\pare{s-\alpha}}^2  - 2 \pare{1 + h\pare{s}}\pare{1 + h\pare{s-\alpha}} \cos(\alpha)} \nonumber\\
& \qquad \qquad \times  \ \partial_\alpha \bra{ \big. \cos \alpha \ h'\pare{s-\alpha} } \Bigg\} \dd \alpha .\label{eq:J1'derivation}
\end{align}
Using the trigonometric identity
$$
1-\cos(\alpha)=\sin^2(\alpha/2),
$$
taking a derivative of the term $ J_2 $ and using 
$$
\partial_s\pare{ h''\pare{s-\alpha}  - 1 - h\pare{s-\alpha}  } =-\partial_\alpha\pare{ h''\pare{s-\alpha}  - 1 - h\pare{s-\alpha}  } 
$$
we find that
\begin{equation*}
\begin{aligned}
J_2'\pare{s} = & \ \frac{1}{4\pi}  \int  \Bigg\{  \frac{ \bra{h' \pare{s}- h'\pare{s-\alpha} \cos \alpha} \bra{\pare{1+h\pare{s}} \cos \alpha - \pare{1+h\pare{s-\alpha}}}     }{ 
\pare{1 + h\pare{s}}^2 + \pare{1 + h\pare{s-\alpha}}^2  - 2 \pare{1 + h\pare{s}}\pare{1 + h\pare{s-\alpha}} \cos(\alpha) } \\
& \qquad +\frac{\bra{1+h\pare{s}- \pare{1+h\pare{s-\alpha}} \cos \alpha} \bra{ h'\pare{s} \cos \alpha - h'\pare{s-\alpha}}  }{ 
\pare{1 + h\pare{s}}^2 + \pare{1 + h\pare{s-\alpha}}^2  - 2 \pare{1 + h\pare{s}}\pare{1 + h\pare{s-\alpha}} \cos(\alpha) }\Bigg\}\ \pare{ h''\pare{s-\alpha}  - 1 - h\pare{s-\alpha}  }  \ \dd \alpha \\
& \ - \frac{1}{2\pi}  \int \Bigg\{ \frac{ \bra{1+h\pare{s}- \pare{1+h\pare{s-\alpha}} \cos \alpha} \bra{\pare{1+h\pare{s}} \cos \alpha - \pare{1+h\pare{s-\alpha}}}   }{\pare{ 
\pare{1 + h\pare{s}}^2 + \pare{1 + h\pare{s-\alpha}}^2  - 2 \pare{1 + h\pare{s}}\pare{1 + h\pare{s-\alpha}} \cos(\alpha) }^{2}} \\
& \qquad \times \bra{\pare{1+h\pare{s}}h'\pare{s} + \pare{1+h\pare{s-\alpha}}h'\pare{s-\alpha} - \pare{ \pare{1+h\pare{s}}h'\pare{s-\alpha} + \pare{1+h\pare{s-\alpha}}h'\pare{s} } \cos \alpha  } \Bigg\} \\
& \qquad \times  \pare{ h''\pare{s-\alpha}  - 1 - h\pare{s-\alpha}  }   \ \dd \alpha \\
& \ - \frac{1}{4\pi}  \int    \frac{ \bra{1+h\pare{s}- \pare{1+h\pare{s-\alpha}} \cos \alpha} \bra{\pare{1+h\pare{s}} \cos \alpha - \pare{1+h\pare{s-\alpha}}}   }{ 
\pare{1 + h\pare{s}}^2 + \pare{1 + h\pare{s-\alpha}}^2  - 2 \pare{1 + h\pare{s}}\pare{1 + h\pare{s-\alpha}} \cos(\alpha) } \ \partial_\alpha\pare{ h''\pare{s-\alpha}  - 1 - h\pare{s-\alpha}  } \dd \alpha . 
\end{aligned}
\end{equation*}
We can now integrate by parts in $\alpha$ and obtain that
\begin{align}
J_{2}'  \pare{s} = & \  \frac{1}{4\pi}  \int  \Bigg\{  \frac{ \bra{  h'\pare{s}  + \pare{1+h\pare{s-\alpha}} \sin(\alpha) } \bra{\pare{1+h\pare{s}} \cos \alpha - \pare{1+h\pare{s-\alpha}}}     }{  
\pare{1 + h\pare{s}}^2 + \pare{1 + h\pare{s-\alpha}}^2  - 2 \pare{1 + h\pare{s}}\pare{1 + h\pare{s-\alpha}} \cos(\alpha) } \nonumber\\
& \qquad +\frac{\bra{1+h\pare{s}- \pare{1+h\pare{s-\alpha}} \cos \alpha} \bra{ h'\pare{s}\cos \alpha  - \pare{ 1 + h\pare{s} }\sin(\alpha) }  }{ 
\pare{1 + h\pare{s}}^2 + \pare{1 + h\pare{s-\alpha}}^2  - 2 \pare{1 + h\pare{s}}\pare{1 + h\pare{s-\alpha}} \cos(\alpha)}\ \pare{ h''\pare{s-\alpha}  - 1 - h\pare{s-\alpha}  } \Bigg\} \ \dd \alpha \nonumber\\
& \ - \frac{1}{2 \pi}  \int \Bigg\{ \frac{ \bra{1+h\pare{s}- \pare{1+h\pare{s-\alpha}} \cos \alpha} \bra{\pare{1+h\pare{s}} \cos \alpha - \pare{1+h\pare{s-\alpha}}}   }{\pare{ 
\pare{1 + h\pare{s}}^2 + \pare{1 + h\pare{s-\alpha}}^2  - 2 \pare{1 + h\pare{s}}\pare{1 + h\pare{s-\alpha}} \cos(\alpha) }^{2}} \nonumber\\
& \qquad \times \bra{ \pare{1+h\pare{s}}h'\pare{s} -\pare{1-h\pare{s-\alpha}}h'\pare{s}\cos \alpha + \pare{1+h\pare{s}}\pare{1+h\pare{s-\alpha}} \sin(\alpha) } \nonumber\\
& \qquad \times  \pare{ h''\pare{s-\alpha}  - 1 - h\pare{s-\alpha}  }  \Bigg\} \ \dd \alpha  .\label{eq:J2'derivation}
\end{align}
A similar procedure can be used in order to compute $ J_3' $. By doing this we obtain
\begin{equation}\label{eq:J3'derivation}
\begin{aligned}
J'_3 \pare{s} = & \  \frac{1}{2 \pi}  \int    \frac{ h' \pare{s} \bra{1+h\pare{s}- \pare{1+h\pare{s-\alpha}} \cos \alpha}  \sin \alpha     + \pare{1+h\pare{s}} h' \pare{s}   \sin \alpha  }{
\pare{1 + h\pare{s}}^2 + \pare{1 + h\pare{s-\alpha}}^2  - 2 \pare{1 + h\pare{s}}\pare{1 + h\pare{s-\alpha}} \cos(\alpha) }  h'\pare{s-\alpha}  \dd \alpha \\
 & \ + \frac{1}{2 \pi}  \int    \frac{ \pare{1+h\pare{s}}^2 \cos(\alpha)  -\pare{1+h\pare{s}}\pare{1+h\pare{s-\alpha}}\pare{\cos^2\alpha-\sin^2\alpha} }{
\pare{1 + h\pare{s}}^2 + \pare{1 + h\pare{s-\alpha}}^2  - 2 \pare{1 + h\pare{s}}\pare{1 + h\pare{s-\alpha}} \cos(\alpha) }  h'\pare{s-\alpha}  \dd \alpha \\
& \ -  \frac{1}{2 \pi} \int    \frac{  \pare{1+h\pare{s}} \bra{1+h\pare{s}- \pare{1+h\pare{s-\alpha}} \cos \alpha}  \sin \alpha    }{\pare{ 
\pare{1 + h\pare{s}}^2 + \pare{1 + h\pare{s-\alpha}}^2  - 2 \pare{1 + h\pare{s}}\pare{1 + h\pare{s-\alpha}} \cos(\alpha) }^2 } \\
& \qquad\quad \times   \pare{2\pare{1+h\pare{s}} h'\pare{s} + 2 \pare{1+h\pare{s-\alpha}}\pare{ \pare{1+h\pare{s}}\sin(\alpha) -h'\pare{s} \cos(\alpha) }}h'\pare{s-\alpha}  \dd \alpha . 
\end{aligned}
\end{equation}
We combine equations \eqref{eq:J1'derivation}, \eqref{eq:J2'derivation} and \eqref{eq:J3'derivation} with \eqref{eq:eveqh2} in order to obtain the evolution equation for $ h' $:
\begin{equation*}
\partial_t h'\pare{s} +   \gamma'\pare{s} \cdot \ddt M = \sum_{j=1}^7 \cJ_j\pare{s} , 
\end{equation*}
where
\begin{equation}
\label{eq:cJ's}
\begin{aligned}
 \cJ_1 = & \ \frac{1}{2}\frac{1}{2\pi} \int  \frac{r r' + \pare{r-\theta} \pare{r-\theta}' - \pare{r' \pare{r-\theta} + r \pare{r-\theta}'}\cos \alpha }{ 4r\pare{r-\theta}\sin^2\pare{\alpha / 2} + \theta^2} \ \partial_\alpha \bra{  \sin(\alpha) \pare{r-\theta} }  \dd \alpha \\
 \cJ_2 = & \  -  \frac{1}{2}\frac{1}{2\pi} \int    \frac{
-r  r' +  r' \pare{r-\theta} \cos \alpha -r \pare{r-\theta} \sin \alpha
 }{ 4r\pare{r-\theta}\sin^2\pare{\alpha / 2} + \theta^2}  \partial_\alpha \bra{ \big. \cos \alpha \ \pare{r-\theta}' }  \dd \alpha
 \\
 \cJ_3 = & \  \frac{1}{4\pi}  \int  \set{  \frac{ \bra{  r'  + \pare{r-\theta}\sin(\alpha) } \bra{r  \cos \alpha - \pare{r-\theta}}     }{ 4r\pare{r-\theta}\sin^2\pare{\alpha / 2} + \theta^2 } +\frac{\bra{r - \pare{r-\theta} \cos \alpha} \bra{ r' \cos \alpha  - r \sin(\alpha) }  }{ 4r\pare{r-\theta}\sin^2\pare{\alpha / 2} + \theta^2}}\\
 & \qquad\quad\times\ \bra{ \pare{r-\theta}''  - \pare{r-\theta}  }  \ \dd \alpha \\
\end{aligned}
\end{equation}
and
\begin{equation}
\label{eq:cJ's2}
\begin{aligned}
 \cJ_4 = & \  - \frac{1}{2\pi}  \int  \set{ \frac{ \bra{r- \pare{r-\theta} \cos \alpha} \bra{ r \cos \alpha - \pare{r-\theta}}   }{ \pare{4r\pare{r-\theta}\sin^2\pare{\alpha / 2} + \theta^2}^{2}} 
 \bra{ r r' -\pare{r-\theta}r'\cos \alpha + r \pare{r-\theta} \sin(\alpha) } }\\
 &\qquad\quad\times \bra{  \pare{r-\theta}''  - \pare{r-\theta}  }\ \dd \alpha  \\
 \cJ_5 = & \  \frac{1}{2 \pi}  \int    \frac{ r' \bra{r - \pare{r-\theta } \cos \alpha}  \sin \alpha     +  r r'   \sin \alpha   }{4r\pare{r-\theta}\sin^2\pare{\alpha / 2} + \theta^2 }  \pare{r-\theta}'   \dd \alpha \\
 \cJ_6 = &  \frac{1}{2 \pi}  \int    \frac{ r^2 \cos(\alpha)  -r\pare{r-\theta}\cos\pare{2\alpha} }{ 4 r \pare{r-\theta}\sin^2\pare{\alpha/2} + \theta^2 }  \pare{r-\theta}'  \dd \alpha , \\
 \cJ_7 = & \  -  \frac{1}{2 \pi} \int    \frac{  r  \bra{r - \pare{r-\theta} \cos \alpha}  \sin \alpha    }{\pare{4r\pare{r-\theta}\sin^2\pare{\alpha / 2} + \theta^2  }^2 }  \pare{2 r r' + 2 \pare{r-\theta }\pare{ r \sin(\alpha) -r' \cos(\alpha) }} \pare{r-\theta}'   \dd \alpha  .
\end{aligned}
\end{equation}
Let us remark that in \eqref{eq:cJ's} and \eqref{eq:cJ's2} the second-order terms are $ \pare{r-\theta}'' $ and $ \partial_\alpha\pare{r-\theta}' $. Using
\begin{equation*}
\pare{r-\theta}'' = h''\pare{s-\alpha} = -\partial_\alpha h'\pare{s-\alpha} = -\partial_\alpha \pare{r-\theta}', 
\end{equation*}
we will be able to integrate by parts. After this, only first derivatives of $h$ appear in the evolution equation for $ h' $. This will allow us to close the pointwise estimates in $ W^{1, \infty} $. 

Additionally to the notation introduced in \eqref{eq:notation_bar_0} we denote with
\begin{equation}
\label{eq:notation_bar_1}
\begin{aligned}
\btp = \btp \pare{\alpha} & = \theta' \pare{\overline{s^1_t}, \overline{s^1_t}-\alpha}, & \bep = \bep \pare{\alpha} & = \eta' \pare{\overline{s^1_t}, \overline{s^1_t}-\alpha, \alpha}, & \brp & = r' \pare{\overline{s^1_t}} , \\
\utp = \utp \pare{\alpha} & = \theta' \pare{\underline{s^1_t}, \underline{s^1_t}-\alpha}, & \uep = \uep \pare{\alpha} & = \eta' \pare{\underline{s^1_t}, \underline{s^1_t}-\alpha, \alpha}, & \urp & = r' \pare{\underline{s^1_t}}. 
\end{aligned}
\end{equation}

\subsubsection*{Bound of $ \cJ_1 $}
Let us remark at first that
\begin{multline}\label{eq:dec_cJ1_1}
\cJ_1   = \frac{1}{2}\frac{1}{2\pi} \int  \frac{ 2 \pare{ r r' + \pare{r-\theta} \pare{r-\theta}' }\sin^2\pare{\alpha / 2} }{ 4r\pare{r-\theta}\sin^2\pare{\alpha / 2} + \theta^2} \ \partial_\alpha \bra{  \sin(\alpha) \pare{r-\theta} }  \dd \alpha \\
-
\frac{1}{2}\frac{1}{2\pi} \int  \frac{\theta \theta' \cos \alpha }{ 4r\pare{r-\theta}\sin^2\pare{\alpha / 2} + \theta^2} \ \partial_\alpha \bra{  \sin(\alpha) \pare{r-\theta} }  \dd \alpha = \cJ_{1, 1} + \cJ_{1, 2}.
\end{multline}
The term $ \cJ_{1, 1} $ is not singular and, using that
$$
\int h'(s)\cos(\alpha)\dd \alpha=0
$$
we obtain
\begin{equation*}
\cJ_{1, 1} \pare{\overline{s^1_t}}  \leq  - \frac{1}{4} \frac{1}{2\pi } \int \theta' \ \cos(\alpha) \  \dd \alpha  +  \cP\pare{\av{h}_{W^{1, \infty}}}   \av{h}_{W^{1, \infty}} \av{h'}_{L^\infty}    . 
\end{equation*}
Using the Taylor expansion, we can write 
\begin{align*}
\cJ_{1, 2}&= -
\frac{1}{2\pi} \int  \frac{\theta }{4r^2\sin^2\pare{\alpha / 2} }\frac{4r^2\sin^2\pare{\alpha / 2} }{ 4r\pare{r-\theta}+ \frac{\theta^2}{\sin^2\pare{\alpha / 2} }} \ \theta' \cos (\alpha)\partial_\alpha \bra{  \sin(\alpha) \pare{r-\theta} }  \dd \alpha\\
&=-\frac{1}{2}\frac{1}{r^2}\frac{1}{2\pi} \int  \frac{\theta }{2\sin^2\pare{\alpha / 2} }\bigg{[}1 +\sum_{\ell=1}^\infty\left(\frac{\theta }{r } -\frac{\theta^2}{4r^2\sin^2\pare{\alpha/2} }\right)^\ell
\bigg{]} \ \theta' \cos (\alpha)\partial_\alpha \bra{  \sin(\alpha) \pare{r-\theta} }  \dd \alpha.
\end{align*}

Hence, evaluating this identity in $ \overline{s^1_t} $ we find the estimate
\begin{equation*}
\cJ_{1, 2} \pare{\overline{s^1_t}} \leq  \av{h}_{W^{1, \infty}} \cP\pare{\av{h}_{W^{1, \infty}}}  \int \frac{\btp}{2\sin^2\pare{\alpha / 2}} \dd \alpha.
\end{equation*}
Collecting both expressions we conclude that
\begin{equation}\label{eq:cJ1}
\cJ_{1} \pare{\overline{s^1_t}} \leq  - \frac{1}{4} \frac{1}{2\pi } \int \btp \ \cos(\alpha) \  \dd \alpha  + \av{h}_{W^{1, \infty}}  \cP\pare{\av{h}_{W^{1, \infty}}}  \int \frac{\btp}{2\sin^2\pare{\alpha / 2}} \dd \alpha   +  \cP\pare{\av{h}_{W^{1, \infty}}}   \av{h}_{W^{1, \infty}} \av{h'}_{L^\infty}   .
\end{equation}

\subsubsection*{Bound of $ \cJ_2 $  }
Let us integrate $ \cJ_2 $ by parts in the variable $ \alpha $, we obtain
\begin{multline}\label{eq:defcJ2}
\cJ_2 =  \   -  \frac{1}{2}\frac{1}{2\pi}  \int    \frac{
 -  r' \partial_\alpha \theta \cos(\alpha) -  \pare{r-\theta} r' \sin(\alpha) +r \partial_\alpha\theta \sin \alpha - r\pare{r-\theta} \cos(\alpha)
 }{ 4r\pare{r-\theta}\sin^2\pare{\alpha / 2} + \theta^2}  \  \eta '   \dd \alpha \\
 +  \frac{1}{2}\frac{1}{2\pi}  \int    \frac{
-r  r' +  r' \pare{r-\theta} \cos \alpha -r \pare{r-\theta} \sin \alpha
 }{ \pare{ 4r\pare{r-\theta}\sin^2\pare{\alpha / 2} + \theta^2 }^2 } \\
 \times  \pare{-4r\partial_\alpha \theta \sin^2\pare{\alpha  / 2} + 2r\pare{r-\theta}\sin(\alpha) + 2\theta\partial_\alpha \theta}  \   \eta '   \dd \alpha = \cJ_{2, 1} + \cJ_{2, 2}. 
\end{multline}

We now evaluate $ \cJ_{2, 1} $ at $ \overline{s^1_t} $ and isolate its linear part while bounding from above the nonlinear part to find that
\begin{multline}\label{eq:cJ21}
\cJ_{2, 1} \pare{\overline{s^1_t}} \leq   \frac{1}{2} \frac{1}{2\pi} \int \frac{\cos \alpha }{4\sin^2\pare{\alpha/2}} \ \bep \ \dd\alpha  \\
 +   \av{h}_{W^{1, \infty}} \cP\pare{\av{h}_{W^{1, \infty}}}  \int \frac{\btp}{2\sin^2\pare{\alpha/2}}  \ \dd\alpha + \cP\pare{\av{h}_{W^{1, \infty}}}\av{h}_{W^{1, \infty}} \av{h'}_{L^\infty} , 
\end{multline}

The same can be done for $ \cJ_{2, 2} $, and this gives
\begin{multline}\label{eq:cJ22}
\cJ_{2, 2} \pare{\overline{s^1_t}} \leq  - \frac{1}{2} \frac{1}{2\pi}  \int \frac{\cos^2\pare{\alpha/2} }{2\sin^2\pare{\alpha/2}} \ \bep\  \dd \alpha  \\
 +   \av{h}_{W^{1, \infty}} \cP\pare{\av{h}_{W^{1, \infty}}} \int \frac{\btp}{2\sin^2\pare{\alpha/2}}  \ \dd\alpha + \cP\pare{\av{h}_{W^{1, \infty}}}\av{h}_{W^{1, \infty}} \av{h'}_{L^\infty} , 
\end{multline}
so that \eqref{eq:cJ21} and \eqref{eq:cJ22} together with
$$
\frac{\cos(x)}{2}-\cos^2(x/2)=-\frac{1}{2},
$$
lead to
\begin{equation*}
\cJ_{2} \pare{\overline{s^1_t}} \leq  - \frac{1}{4} \frac{1}{2\pi}   \int \frac{\bep }{2\sin^2\pare{\alpha/2}}  \dd \alpha   \\
 + \av{h}_{W^{1, \infty}} \cP\pare{\av{h}_{W^{1, \infty}}} \int \frac{\btp}{2\sin^2\pare{\alpha/2}}  \ \dd\alpha + \cP\pare{\av{h}_{W^{1, \infty}}}\av{h}_{W^{1, \infty}} \av{h'}_{L^\infty} . 
\end{equation*}

We now use the following identity which holds for any $ s\in\bS^1 $
\begin{multline*}
\int \frac{\eta'}{2\sin^2\pare{\alpha / 2}} \dd\alpha = \int \frac{h'\pare{s} - h'\pare{s-\alpha}\cos(\alpha)}{2\sin^2\pare{\alpha / 2}} \dd\alpha \\
 = \int \frac{h'\pare{s} - h'\pare{s-\alpha}}{2\sin^2\pare{\alpha / 2}} \dd\alpha + \int h'\pare{s-\alpha}\dd \alpha = \int \frac{\theta ' }{2\sin^2\pare{\alpha / 2}} \dd\alpha. 
\end{multline*}
As a consequence,
\begin{equation}\label{eq:cJ2}
\cJ_{2} \pare{\overline{s^1_t}} \leq - \frac{1}{4} \frac{1}{2\pi} \bra{1- \av{h}_{W^{1, \infty}} \cP\pare{\av{h}_{W^{1, \infty}}}} \int \frac{\btp }{2\sin^2\pare{\alpha/2}}  \dd \alpha + \cP\pare{\av{h}_{W^{1, \infty}}}\av{h}_{W^{1, \infty}} \av{h'}_{L^\infty} . 
\end{equation}
\subsubsection*{Bound of $ \cJ_3 $}\label{sec:cJ3}
The terms $ \cJ_3 $ and $ \cJ_4 $ are more challenging to bound. Let us write
\begin{equation*}
\cJ_3 = \cJ_{3, 1} + \cJ_{3, 2}, 
\end{equation*}
where
\begin{align*}
 \cJ_{3, 1} = & \  \frac{1}{4\pi}  \int   \set{  \frac{ \bra{  r'  + \pare{r-\theta}\sin(\alpha)} \bra{r  \cos \alpha - \pare{r-\theta}}     }{ 4r\pare{r-\theta}\sin^2\pare{\alpha / 2} + \theta^2 } +\frac{\bra{r - \pare{r-\theta} \cos \alpha} \bra{ r' \cos \alpha  - r \sin(\alpha)  }  }{ 4r\pare{r-\theta}\sin^2\pare{\alpha / 2} + \theta^2}} \  \pare{r-\theta}''\ \dd \alpha , \\
 \cJ_{3, 2} = & \  - \frac{1}{4\pi}  \int   \set{  \frac{ \bra{  r'  + \pare{r-\theta}\sin(\alpha)} \bra{r  \cos \alpha - \pare{r-\theta}}     }{ 4r\pare{r-\theta}\sin^2\pare{\alpha / 2} + \theta^2 } +\frac{\bra{r - \pare{r-\theta} \cos \alpha} \bra{ r' \cos \alpha  - r \sin(\alpha)  }  }{ 4r\pare{r-\theta}\sin^2\pare{\alpha / 2} + \theta^2}}\  \pare{r-\theta} \ \dd \alpha, 
\end{align*}
and integrate by parts $ \cJ_{3, 1} $ in $ \alpha $. This gives
\begin{equation*}
\cJ_{3, 1}= \cJ_{3, 1, \RN{1}} + \cJ_{3, 1, \RN{2}}, 
\end{equation*}
where, using the identity $ \pare{r-\theta}'' = -\partial_\alpha\pare{r-\theta}' = \partial_\alpha \theta' $ we obtain that
\begin{align*}
\cJ_{3, 1, \RN{1}} = & -\frac{1}{4\pi} \int \partial_\alpha \set{\frac{ \bra{  r'  + \pare{r-\theta}\sin(\alpha)} \bra{\theta - 2r\sin^2\pare{\alpha / 2}}     }{ 4r\pare{r-\theta}\sin^2\pare{\alpha / 2} + \theta^2 }  } \theta' \ \dd \alpha, \\
\cJ_{3, 1, \RN{2}} = & -\frac{1}{4\pi}  \int \partial_\alpha \set{ \frac{\bra{2r\sin^2\pare{\alpha / 2} + \theta\cos(\alpha) } \bra{ r' \cos \alpha  - r \sin(\alpha)   }  }{ 4r\pare{r-\theta}\sin^2\pare{\alpha / 2} + \theta^2}  } \theta' \ \dd \alpha.
\end{align*}

As before, we use a Taylor expansion for the term $ \cJ_{3, 1, \RN{1}} $ and we obtain that

\begin{equation}\label{eq:cJ31I}
\cJ_{3, 1, \RN{1}} \pare{\overline{s^1_t}} \leq  \frac{1}{4}\frac{1}{2\pi}\int \btp \cos(\alpha)\  \dd\alpha   +\av{h}_{W^{1, \infty}} \cP\pare{\av{h}_{W^{1, \infty}} }  \int \frac{\btp}{2\sin^2\pare{\alpha/2}} \dd\alpha +  \cP\pare{\av{h}_{W^{1, \infty}} }   \av{h}_{W^{1, \infty}}\av{h'}_{L^\infty}  . 
\end{equation}

We can now start studying the term $ \cJ_{3, 1, \RN{2}} $. We expand the derivative and we obtain that 
\begin{multline*}
\cJ_{3, 1, \RN{2}} = \frac{1}{4\pi}  \int \frac{-4r\partial_\alpha\theta \sin^2\pare{\alpha / 2} + 2r\pare{r-\theta}\sin(\alpha) + 2\theta\partial_\alpha\theta}{\pare{4r\pare{r-\theta}\sin^2\pare{\alpha/2} + \theta^2}^2} \bra{2r\sin^2\pare{\alpha / 2} + \theta\cos(\alpha) } \bra{ r' \cos \alpha  - r \sin(\alpha)  }  \ \theta' \ \dd\alpha \\
-\frac{1}{4\pi}  \int \Bigg\{ \frac{\bra{r\sin(\alpha)+ \partial_\alpha\theta\cos(\alpha) - \theta\sin(\alpha) } \bra{ r' \cos \alpha  - r \sin(\alpha)    } }{ 4r\pare{r-\theta}\sin^2\pare{\alpha / 2} + \theta^2 }  \\
 + \frac{\bra{2r\sin^2\pare{\alpha / 2} + \theta\cos(\alpha) } \bra{ - r' \sin \alpha  - r \cos(\alpha)  }   }{4r\pare{r-\theta}\sin^2\pare{\alpha / 2} + \theta^2} \Bigg\}   \theta' \ \dd \alpha. 
\end{multline*}

Computations similar to the ones performed for the terms $ \cJ_1$ and $ \cJ_2 $ allow us to deduce the estimate
\begin{equation}\label{eq:cJ31II}
\cJ_{3, 1, \RN{2}} \pare{\overline{s^1_t}} \leq   \frac{1}{4}\frac{1}{2\pi}\int \btp \cos(\alpha)\  \dd\alpha   + \av{h}_{W^{1, \infty}} \cP\pare{\av{h}_{W^{1, \infty}} }   \int \frac{\btp}{2\sin^2\pare{\alpha/2}} \dd\alpha +  \cP\pare{\av{h}_{W^{1, \infty}} }   \av{h}_{W^{1, \infty}}\av{h'}_{L^\infty}  . 
\end{equation}

We combine now the estimates \eqref{eq:cJ31I} and \eqref{eq:cJ31II} and obtain that
\begin{equation}\label{eq:cJ31}
\cJ_{3, 1 } \pare{\overline{s^1_t}} \leq  \frac{1}{2}\frac{1}{2\pi}\int \btp \cos(\alpha)\  \dd\alpha   + \av{h}_{W^{1, \infty}} \cP\pare{\av{h}_{W^{1, \infty}} }  \int \frac{\btp}{2\sin^2\pare{\alpha/2}} \dd\alpha +  \cP\pare{\av{h}_{W^{1, \infty}} }   \av{h}_{W^{1, \infty}}\av{h'}_{L^\infty}   .
\end{equation}

We can now focus on the term $ \cJ_{3, 2} $. let us use the identity
\begin{multline*}
   \bra{  r'  + \pare{r-\theta}\sin(\alpha) } \bra{r  \cos \alpha - \pare{r-\theta}}     + \bra{r - \pare{r-\theta} \cos \alpha} \bra{ r' \cos \alpha  - r \sin(\alpha)}  \\
   = \bra{r'\pare{1+\cos^2\alpha}\theta} - \bra{\pare{4r \pare{r-\theta} \sin^2\pare{\alpha/2} + \theta^2}\sin(\alpha)} - \bra{4rr'\sin^4\pare{\alpha / 2}}, 
\end{multline*}
 in order to reformulate $ \cJ_{3, 2} $ as
\begin{equation*}
\cJ_{3, 2} = \cJ_{3, 2, \RN{1}} +  \cJ_{3, 2, \RN{2}} +  \cJ_{3, 2, \RN{3}}, 
\end{equation*}
where 
\begin{align*}
\cJ_{3, 2, \RN{1}} & = -\frac{1}{4\pi}  \int \frac{r'\pare{r-\theta} \pare{1+\cos^2\alpha}}{2r\pare{r-\theta} + \frac{\theta^2}{2\sin^2\pare{\alpha / 2}}}\frac{\theta}{2\sin^2\pare{\alpha/2}} \dd \alpha, \\
\cJ_{3, 2, \RN{2}} & = \frac{1}{4\pi} \int\pare{r-\theta}\sin(\alpha) \ \dd \alpha, \\
\cJ_{3, 2, \RN{3}} & = \frac{1}{4\pi} \int \frac{ 2\sin^2\pare{\alpha/2}}{2r\pare{r-\theta} + \frac{\theta^2 }{2 \sin^2\pare{\alpha/2}}} rr'\pare{r-\theta}\dd\alpha.
\end{align*}
We can further compute
\begin{align*}
\cJ_{3, 2, \RN{1}} & = -\frac{1}{4\pi} \int \frac{r'\pare{r-\theta} \pare{1+\cos^2\alpha}}{2r\pare{r-\theta} + \frac{\theta^2}{2\sin^2\pare{\alpha / 2}}}\frac{\theta}{2\sin^2\pare{\alpha/2}} \dd \alpha, \\
& = \frac{1}{4\pi} \int  \frac{r'\pare{r-\theta} \pare{1+\cos^2\alpha}}{2r\pare{r-\theta} + \frac{\theta^2}{2\sin^2\pare{\alpha / 2}}}\frac{h'\pare{s}\alpha - \theta}{2\sin^2\pare{\alpha/2}} \dd \alpha - \frac{1}{4\pi} \int  \frac{r'\pare{r-\theta} \pare{1+\cos^2\alpha}}{2r\pare{r-\theta} + \frac{\theta^2}{2\sin^2\pare{\alpha / 2}}}\frac{h'\pare{s}\alpha }{2\sin^2\pare{\alpha/2}} \dd \alpha\\
& = \mathbb{L}_1 + \mathbb{L}_2. 
\end{align*}
We use now the identity
\begin{equation*}
1= 2\sin^2\pare{\alpha/4} + \cos\pare{\alpha / 2}, 
\end{equation*}
in order to deduce that
\begin{multline*}
\mathbb{L}_1 = \frac{1}{4\pi} \int  \frac{r'\pare{r-\theta} \pare{1+\cos^2\alpha}}{2r\pare{r-\theta} + \frac{\theta^2}{2\sin^2\pare{\alpha / 2}}}\frac{\sin^2\pare{\alpha/4}}{\sin^2\pare{\alpha/2}}\pare{h'\pare{s}\alpha - \theta} \dd \alpha  \\
+ \frac{1}{4\pi} \int \pare{  \frac{r'\pare{r-\theta} \pare{1+\cos^2\alpha}}{2r\pare{r-\theta} + \frac{\theta^2}{2\sin^2\pare{\alpha / 2}}} \sin\pare{\alpha / 2} }\frac{h'\pare{s}\alpha - \theta}{2\sin^3\pare{\alpha/2}} \cos\pare{\alpha / 2} \dd \alpha = \bL_{1, 1} + \bL_{1, 2}. 
\end{multline*}
The term $ \bL_{1, 1} $ is not a singular integral so that we can easily obtain the bound
\begin{equation*}
\bL_{1, 1} \pare{\overline{s^1_t}} \leq \cP\pare{\av{h}_{W^{1, \infty}}}  \av{h}_{W^{1, \infty}}\av{h'}_{L^\infty} .
\end{equation*}
Since 
\begin{equation*}
\left. \frac{h'\pare{s}\alpha - \theta}{2\sin^3\pare{\alpha/2}} \cos\pare{\alpha / 2} \right|_{s=\overline{s^1_t}} \geq 0, 
\end{equation*}
we deduce the bound
\begin{equation}\label{eq:bL1}
\bL_{1} \pare{\overline{s^1_t}} \leq \av{h}_{W^{1, \infty}} \cP\pare{\av{h}_{W^{1, \infty}}}  \int   \frac{h'\pare{\overline{s^1_t}}\alpha - \overline{\theta}  }{2\sin^3\pare{\alpha/2}}\cos\pare{\alpha/2} \dd \alpha  + \cP\pare{\av{h}_{W^{1, \infty}}} \av{h}_{W^{1, \infty}}\av{h'}_{L^\infty}  . 
\end{equation}
We use now a Taylor expansion together with the symmetry of the integrand in order to write $ \bL_2 $ as
\begin{align*}
\bL_2&=- \frac{1}{4}\frac{1}{2\pi}\frac{1}{r^2} \int  \left[1 +\sum_{\ell=1}^\infty\left(\frac{\theta }{r } -\frac{\theta^2}{4r^2\sin^2\pare{\alpha/2} }\right)^\ell\right]\pare{1+\cos^2\alpha}r'\pare{r-\theta}\frac{h'\pare{s}\alpha }{2\sin^2\pare{\alpha/2}} \dd \alpha\\
&=- \frac{1}{4}\frac{1}{2\pi}\frac{(r')^2}{r} \int  \left[1 +\sum_{\ell=1}^\infty\left(\frac{\theta }{r } -\frac{\theta^2}{4r^2\sin^2\pare{\alpha/2} }\right)^\ell\right]\pare{1+\cos^2\alpha}\frac{\alpha }{2\sin^2\pare{\alpha/2}} \dd \alpha\\
&\quad+ \frac{1}{4}\frac{1}{2\pi}\frac{(r')^2}{r^2} \int  \left[1 +\sum_{\ell=1}^\infty\left(\frac{\theta }{r } -\frac{\theta^2}{4r^2\sin^2\pare{\alpha/2} }\right)^\ell\right]\pare{1+\cos^2\alpha}\frac{\alpha \theta }{2\sin^2\pare{\alpha/2}} \dd \alpha\\
&=- \frac{1}{4}\frac{1}{2\pi}\frac{(r')^2}{r} \int  \left[\sum_{\ell=1}^\infty\left(\frac{\theta }{r } -\frac{\theta^2}{4r^2\sin^2\pare{\alpha/2} }\right)^\ell\right]\pare{1+\cos^2\alpha}\frac{\alpha }{2\sin^2\pare{\alpha/2}} \dd \alpha\\
&\quad+ \frac{1}{4}\frac{1}{2\pi}\frac{(r')^2}{r^2} \int  \left[1 +\sum_{\ell=1}^\infty\left(\frac{\theta }{r } -\frac{\theta^2}{4r^2\sin^2\pare{\alpha/2} }\right)^\ell\right]\pare{1+\cos^2\alpha}\frac{\alpha \theta }{2\sin^2\pare{\alpha/2}} \dd \alpha.
\end{align*}
We find that
$$
\frac{1}{4}\frac{1}{2\pi}\frac{(r')^2}{r^2} \int  \left[1 +\sum_{\ell=1}^\infty\left(\frac{\theta }{r } -\frac{\theta^2}{4r^2\sin^2\pare{\alpha/2} }\right)^\ell\right]\pare{1+\cos^2\alpha}\frac{\alpha \theta }{2\sin^2\pare{\alpha/2}} \dd \alpha\leq \cP\pare{\av{h}_{W^{1, \infty}}}  \av{h}_{W^{1, \infty}} \av{h'}_{L^\infty}.
$$
Similarly, we can expand and obtain that
$$
\sum_{\ell=1}^\infty\left(\frac{\theta }{r } -\frac{\theta^2}{4r^2\sin^2\pare{\alpha/2} }\right)^\ell=\sum_{\ell=1}^\infty \sum_{k=0}^\ell c_{\ell,k}\left(\frac{\theta }{r }\right)^{\ell-k}\left(-\frac{\theta^2}{4r^2\sin^2\pare{\alpha/2} }\right)^k.
$$
Using this identity we have that
\begin{multline*}
- \frac{1}{4}\frac{1}{2\pi}\frac{(r')^2}{r} \int  \left[\sum_{\ell=1}^\infty\left(\frac{\theta }{r } -\frac{\theta^2}{4r^2\sin^2\pare{\alpha/2} }\right)^\ell\right]\pare{1+\cos^2\alpha}\frac{\alpha }{2\sin^2\pare{\alpha/2}} \dd \alpha\\
\leq - \frac{1}{4}\frac{1}{2\pi}\frac{(r')^2}{r} \int  \left[\sum_{\ell=1}^\infty\left(-\frac{\theta^2}{4r^2\sin^2\pare{\alpha/2} }\right)^\ell\right]\pare{1+\cos^2\alpha}\frac{\alpha }{2\sin^2\pare{\alpha/2}} \dd \alpha\\
+\cP\pare{\av{h}_{W^{1, \infty}}}  \av{h}_{W^{1, \infty}} \av{h'}_{L^\infty}.
\end{multline*}
Furthermore, we compute
\begin{multline*}
- \frac{1}{4}\frac{1}{2\pi}\frac{(r')^2}{r} \int  \left[\sum_{\ell=1}^\infty\left(-\frac{\theta^2}{4r^2\sin^2\pare{\alpha/2} }\right)^\ell\right]\pare{1+\cos^2\alpha}\frac{\alpha }{2\sin^2\pare{\alpha/2}}\left(1\pm\cos(\alpha/2)\right) \dd \alpha\\
\leq \av{h}_{W^{1, \infty}}    \cP\pare{\av{h}_{W^{1, \infty}}} \int   \frac{h'\pare{\overline{s^1_t}}\alpha - \bar{\theta}}{2\sin^3\pare{\alpha/2}}\cos\pare{\alpha/2} \dd \alpha+\cP\pare{\av{h}_{W^{1, \infty}}}  \av{h}_{W^{1, \infty}} \av{h'}_{L^\infty}.
\end{multline*}
As a consequence, we find that
\begin{equation}\label{eq:bL2}
\begin{aligned}
\bL_2\pare{\overline{s^1_t}} \leq & \av{h}_{W^{1, \infty}}    \cP\pare{\av{h}_{W^{1, \infty}}}  \int   \frac{h'\pare{\overline{s^1_t}}\alpha - \bar{\theta} }{2\sin^3\pare{\alpha/2}}\cos\pare{\alpha/2} \dd \alpha + \cP\pare{\av{h}_{W^{1, \infty}}}  \av{h}_{W^{1, \infty}} \av{h'}_{L^\infty} .
\end{aligned}
\end{equation}

The inequalities \eqref{eq:bL1} and \eqref{eq:bL2} allow us to deduce the estimate
\begin{equation}\label{eq:cJ32I}
\cJ_{3, 2,\RN{1}} \pare{\overline{s^1_t}} \leq  \av{h}_{W^{1, \infty}}    \cP\pare{\av{h}_{W^{1, \infty}}} \int   \frac{h'\pare{\overline{s^1_t}}\alpha - \bar{\theta} }{2\sin^3\pare{\alpha/2}}\cos\pare{\alpha/2} \dd \alpha + \cP\pare{\av{h}_{W^{1, \infty}}}  \av{h}_{W^{1, \infty}} \av{h'}_{L^\infty} . 
\end{equation}

The term $ \cJ_{3, 2, \RN{2}} $ provides a linear contribution, which is
\begin{equation}\label{eq:cJ32II}
\cJ_{3, 2, \RN{2}} = -\frac{1}{4\pi} \int \theta \sin \alpha \ \dd \alpha = \frac{1}{4\pi} \int \theta'\cos(\alpha) \ \dd\alpha . 
\end{equation}

Let us now study the term $ \cJ_{3, 2, \RN{3}} $. We use the identity
\begin{equation*}
\frac{ 2\sin^2\pare{\alpha/2}}{2r\pare{r-\theta} + \frac{\theta^2 }{2 \sin^2\pare{\alpha/2}}} rr'\pare{r-\theta} = r' \sin^2\pare{\alpha / 2} + \cP\pare{\av{h'}_{L^\infty}^2}, 
\end{equation*}
which lead to the bound
\begin{equation}\label{eq:cJ32III}
\cJ_{3, 2, \RN{3}}\pare{\overline{s^1_t}} = \frac{1}{4} h'\pare{\overline{s^1_t}} + \cP\pare{\av{h}_{W^{1, \infty}}} \av{h}_{W^{1, \infty}} \av{h'}_{L^\infty} . 
\end{equation}

We combine \eqref{eq:cJ32I}, \eqref{eq:cJ32II} and \eqref{eq:cJ32III} in order to obtain a bound on $ \cJ_{3, 2} $ which is 
\begin{multline}\label{eq:cJ32}
\cJ_{3, 2} \pare{\overline{s^1_t}} \leq  \frac{h'\pare{\overline{s^1_t}}}{4}  +\frac{1}{2}\frac{1}{2\pi} \int \theta'\cos(\alpha) \ \dd\alpha   \\
+  \av{h}_{W^{1, \infty}}    \cP\pare{\av{h}_{W^{1, \infty}}} \int   \frac{h'\pare{\overline{s^1_t}}\alpha - \bar{\theta} }{2\sin^3\pare{\alpha/2}}\cos\pare{\alpha/2} \dd \alpha + \cP\pare{\av{h}_{W^{1, \infty}}} \av{h}_{W^{1, \infty}} \av{h'}_{L^\infty} . 
\end{multline}
The estimates \eqref{eq:cJ31} and \eqref{eq:cJ32} close, finally the estimation of $ \cJ_3 $, which is
\begin{multline}\label{eq:cJ3}
\cJ_{3 } \pare{\overline{s^1_t}} \leq  \frac{h'\pare{\overline{s^1_t}}}{4}  +  \frac{1}{2\pi} \int \theta'\cos(\alpha) \ \dd\alpha  + \av{h}_{W^{1, \infty}} \cP\pare{\av{h}_{W^{1, \infty}} }   \int \frac{\btp}{2\sin^2\pare{\alpha/2}} \dd\alpha  \\
+ \av{h}_{W^{1, \infty}}    \cP\pare{\av{h}_{W^{1, \infty}}} \int   \frac{h'\pare{\overline{s^1_t}}\alpha - \bar{\theta} }{2\sin^3\pare{\alpha/2}}\cos\pare{\alpha/2} \dd \alpha +  \cP\pare{\av{h}_{W^{1, \infty}} }  \av{h}_{W^{1, \infty}}\av{h'}_{L^\infty}.
\end{multline}

\subsubsection*{Bound of $ \cJ_4 $}

As it was done for $ \cJ_3 $, we decompose $ \cJ_4 = \cJ_{4, 1} + \cJ_{4, 2} $ where
\begin{align*}
\cJ_{4, 1} = & \   \frac{1}{2\pi}   \int \partial_\alpha\set{ \frac{ \bra{r- \pare{r-\theta} \cos \alpha} \bra{ r \cos \alpha - \pare{r-\theta}}   }{ \pare{4r\pare{r-\theta}\sin^2\pare{\alpha / 2} + \theta^2}^{2}} 
 \bra{ r r' -\pare{r-\theta}r'\cos \alpha + r \pare{r-\theta} \sin(\alpha) } } \ \theta' \ \dd \alpha , \\
 \cJ_{4, 2} = & \   \frac{1}{2\pi}   \int \set{ \frac{ \bra{r- \pare{r-\theta} \cos \alpha} \bra{ r \cos \alpha - \pare{r-\theta}}   }{ \pare{4r\pare{r-\theta}\sin^2\pare{\alpha / 2} + \theta^2}^{2}} 
 \bra{ r r' -\pare{r-\theta}r'\cos \alpha + r \pare{r-\theta} \sin(\alpha) } }\pare{r-\theta}   \dd \alpha .
\end{align*}
And we expand $ \cJ_{4, 1} $ thus obtaining 
\begin{equation*}
\cJ_{4, 1} = \cJ_{4, 1 , \RN{1}} + \cJ_{4, 1 , \RN{2}} + \cJ_{4, 1 , \RN{3}} + \cJ_{4, 1 , \RN{4}}, 
\end{equation*}
where
\begin{align*}
 \cJ_{4, 1 , \RN{1}}  = & \   \frac{1}{2\pi}   \int  \frac{ -2 \pare{-4 r \partial_\alpha\theta\sin^2\pare{\alpha / 2} + 2r\pare{r-\theta}\sin(\alpha) + 2\theta\partial_\alpha\theta} }{ \pare{4r\pare{r-\theta}\sin^2\pare{\alpha / 2} + \theta^2}^{3}} \\
 & \qquad \times \bra{r- \pare{r-\theta} \cos \alpha} \bra{ r \cos \alpha - \pare{r-\theta}}   
 \bra{ r r' -\pare{r-\theta}r'\cos \alpha + r \pare{r-\theta} \sin(\alpha) }  \ \theta' \ \dd \alpha , \\
 \cJ_{4, 1 , \RN{2}}  = & \    \frac{1}{2\pi}  \int  \frac{ \bra{\partial_\alpha\theta \cos \alpha + \pare{r-\theta}\sin(\alpha)} \bra{ r \cos \alpha - \pare{r-\theta}}   }{ \pare{4r\pare{r-\theta}\sin^2\pare{\alpha / 2} + \theta^2}^{2}} 
 \bra{ r r' -\pare{r-\theta}r'\cos \alpha + r \pare{r-\theta} \sin(\alpha) }  \ \theta' \ \dd \alpha, \\
 \cJ_{4, 1, \RN{3}} = & \   \frac{1}{2\pi}   \int  \frac{ \bra{r- \pare{r-\theta} \cos \alpha} \bra{-r\sin(\alpha) + \partial_\alpha\theta}   }{ \pare{4r\pare{r-\theta}\sin^2\pare{\alpha / 2} + \theta^2}^{2}} 
 \bra{ r r' -\pare{r-\theta}r'\cos \alpha + r \pare{r-\theta} \sin(\alpha) }  \ \theta' \ \dd \alpha , \\
 \cJ_{4, 1, \RN{4}} = & \   \frac{1}{2\pi}   \int  \frac{ \bra{r- \pare{r-\theta} \cos \alpha} \bra{ r \cos \alpha - \pare{r-\theta}}   }{ \pare{4r\pare{r-\theta}\sin^2\pare{\alpha / 2} + \theta^2}^{2}} 
 \bra{ \pare{\pare{r-\theta} r' - r\partial_\alpha\theta}\sin(\alpha) + \pare{\partial_\alpha\theta r' + r\pare{r-\theta}}\cos(\alpha) }  \ \theta' \ \dd \alpha , \\
\end{align*}

Let us start analyzing the term $ \cJ_{4, 1, \RN{1}} $, and we reformulate it as
\begin{multline*}
 \cJ_{4, 1 , \RN{1}}  =   \frac{1}{2\pi}  \pv \int \frac{\sin^6 \pare{\alpha /2} }{ \pare{4r\pare{r-\theta}\sin^2\pare{\alpha / 2} + \theta^2}^{3}} \frac{ -2 \pare{-4 r \partial_\alpha\theta\sin^2\pare{\alpha / 2} + 2r\pare{r-\theta}\sin(\alpha) + 2\theta\partial_\alpha\theta} }{ \sin\pare{\alpha /2}} \\
  \times \bra{\frac{r- \pare{r-\theta} \cos \alpha}{ \sin\pare{\alpha /2}}} \bra{ \frac{r \cos \alpha - \pare{r-\theta}}{\sin\pare{\alpha /2}}}   
 \bra{ \frac{r r' -\pare{r-\theta}r'\cos \alpha + r \pare{r-\theta} \sin(\alpha) }{\sin\pare{\alpha /2}}}  \ \frac{ \theta'}{\sin^2\pare{\alpha/2}} \ \dd \alpha .
\end{multline*}
We use the following identities
\begin{align*}
\frac{\sin^6 \pare{\alpha /2} }{ \pare{4r\pare{r-\theta}\sin^2\pare{\alpha / 2} + \theta^2}^{3}} & = \frac{1}{\pare{2r}^6}  +\cP\pare{\av{h'}_{L^\infty}} , \\
\frac{ -2 \pare{-4 r \partial_\alpha\theta\sin^2\pare{\alpha / 2} + 2r\pare{r-\theta}\sin(\alpha) + 2\theta\partial_\alpha\theta} }{ \sin\pare{\alpha /2}}  & = - 8 r^2 \cos\pare{\alpha / 2}  +\cP\pare{\av{h'}_{L^\infty}} , \\
\frac{r- \pare{r-\theta} \cos \alpha}{ \sin\pare{\alpha /2}} & = 2r\sin\pare{\alpha/2} +\cP\pare{\av{h'}_{L^\infty}} , \\
 \frac{r \cos \alpha - \pare{r-\theta}}{\sin\pare{\alpha /2}} & = - 2r\sin\pare{\alpha/2}  +\cP\pare{\av{h'}_{L^\infty}} , \\
 \frac{r r' -\pare{r-\theta}r'\cos \alpha + r \pare{r-\theta} \sin(\alpha) }{\sin\pare{\alpha /2}} & = 2r^2\cos\pare{\alpha / 2}   +\cP\pare{\av{h'}_{L^\infty}} ,
\end{align*}
so that we proved that
\begin{equation*}
\cJ_{4, 1, \RN{1}} \pare{\overline{s^1_t}} =  \frac{1}{2}\frac{1}{2\pi} \int \btp \pare{1+\cos(\alpha)}\dd \alpha   + \av{h}_{W^{1, \infty}}\cP\pare{\av{h}_{W^{1, \infty}}} \int \frac{\btp}{2\sin^2\pare{\alpha / 2}} \dd\alpha. 
\end{equation*}
In a similar fashion we can deduce that
\begin{align*}
\cJ_{4, 1, \RN{2}} \pare{\overline{s^1_t}} & =  -\frac{1}{4}\frac{1}{2\pi} \int \btp \pare{1+\cos(\alpha)}\dd \alpha + \av{h}_{W^{1, \infty}}\cP\pare{\av{h}_{W^{1, \infty}}} \int \frac{\btp}{2\sin^2\pare{\alpha / 2}} \dd\alpha, \\
\cJ_{4, 1, \RN{3}} \pare{\overline{s^1_t}} & =  -\frac{1}{4}\frac{1}{2\pi} \int \btp \pare{1+\cos(\alpha)}\dd \alpha + \av{h}_{W^{1, \infty}}\cP\pare{\av{h}_{W^{1, \infty}}} \int \frac{\btp}{2\sin^2\pare{\alpha / 2}} \dd\alpha, \\
\cJ_{4, 1, \RN{4}} \pare{\overline{s^1_t}} & =  -\frac{1}{4}\frac{1}{2\pi} \int \btp  \cos(\alpha) \ \dd \alpha + \av{h}_{W^{1, \infty}}\cP\pare{\av{h}_{W^{1, \infty}}} \int \frac{\btp}{2\sin^2\pare{\alpha / 2}} \dd\alpha.
\end{align*}
As a consequence we have that
\begin{equation}\label{eq:J41}
\cJ_{4, 1 }\pare{\overline{s^1_t}} \leq  -\frac{1}{4}\frac{1}{2\pi} \int \btp   \cos(\alpha) \ \dd \alpha   + \av{h}_{W^{1, \infty}}\cP\pare{\av{h}_{W^{1, \infty}}} \int \frac{\btp}{2\sin^2\pare{\alpha / 2}} \dd\alpha . 
\end{equation}

We now use the identity
\begin{equation*}
 \bra{r- \pare{r-\theta} \cos \alpha} \bra{ r \cos \alpha - \pare{r-\theta}}  = -4r \pare{r-\theta} \sin^4\pare{\alpha / 2} + \cO\pare{\av{h'}_{L^\infty}}, 
\end{equation*}
in order to obtain the required bound for $ \cJ_{4, 2} $
\begin{equation*}
\cJ_{4, 2}\pare{\overline{s^1_t}} \leq  - \frac{h'\pare{\overline{s^1_t}}}{4} -\frac{1}{4}\frac{1}{2\pi} \int \btp \ \cos(\alpha) \ \dd\alpha  + \cP\pare{\av{h}_{W^{1, \infty}}}\av{h}_{W^{1, \infty}} \av{h'}_{L^\infty}.
\end{equation*}
Collecting both estimates we conclude that
\begin{multline} \label{eq:cJ4}
\cJ_4\pare{\overline{s^1_t}} \leq  - \frac{h'\pare{\overline{s^1_t}}}{4} -\frac{1}{2}\frac{1}{2\pi} \int \btp  \cos(\alpha)  \ \dd \alpha  + \av{h}_{W^{1, \infty}}\cP\pare{\av{h}_{W^{1, \infty}}} \int \frac{\btp}{2\sin^2\pare{\alpha / 2}} \dd\alpha \\
  +  \cP\pare{\av{h}_{W^{1, \infty}}}\av{h}_{W^{1, \infty}} \av{h'}_{L^\infty} . 
\end{multline}

\subsubsection*{Bound of $ \cJ_5 $}

Let us rewrite $ \cJ_5 $ as 
\begin{equation*}
\begin{aligned}
 \cJ_5 = & \    \frac{1}{2 \pi}   \int    \frac{ \pare{  \eta    +  r  }r'   \sin \alpha   }{4r\pare{r-\theta}\sin^2\pare{\alpha / 2} + \theta^2 }  \pare{r-\theta}'   \dd \alpha , \\
 = & \    \frac{1}{2 \pi}   \int    \frac{ \eta r'   \sin \alpha   }{4r\pare{r-\theta}\sin^2\pare{\alpha / 2} + \theta^2 }  \pare{r-\theta}'   \dd \alpha  +  \frac{1}{2 \pi}  \int    \frac{ r r'   \sin \alpha   }{4r\pare{r-\theta}\sin^2\pare{\alpha / 2} + \theta^2 }  \pare{r-\theta}'   \dd \alpha  ,  \\
 = & \ \cJ_{5, 1} +  \cJ_{5, 2} .
\end{aligned}
\end{equation*}
Thus we compute
\begin{equation}\label{eq:J5_1}
\cJ_{5, 1}\pare{\overline{s^1_t}} \leq\cP\pare{\av{h}_{W^{1, \infty}}} \av{h}_{W^{1, \infty}} \av{h'}_{L^\infty} . 
\end{equation}

We study now the term $ \cJ_{5, 2} $. We decompose ir as
\begin{equation*}
 \cJ_{5, 2} =    \frac{1}{2 \pi}   \int    \frac{ r \pare{ r' }^2   \sin \alpha   }{4r\pare{r-\theta}\sin^2\pare{\alpha / 2} + \theta^2 }     \dd \alpha  - \frac{1}{2 \pi}   \int    \frac{ r  r'   \sin \alpha   }{4r\pare{r-\theta}\sin^2\pare{\alpha / 2} + \theta^2 }  \ \theta' \    \dd \alpha = \cJ_{5, 2, \RN{1}} + \cJ_{5, 2, \RN{2}}. 
\end{equation*}
We start studying the term $ \cJ_{5, 2, \RN{2}} $, and we rewrite it as
\begin{equation*}
\cJ_{5, 2, \RN{2}}  = - \frac{1}{2\pi}   \int \frac{rr'}{2r\pare{r-\theta} + \frac{\theta^2}{2\sin^2\pare{\alpha/2}}} \cot\pare{\alpha/ 2} \theta' \ \dd \alpha. 
\end{equation*}
Using Lemma \ref{lem:monotonicity_lemma2} we conclude that
\begin{equation}\label{eq:J5_2}
\cJ_{5, 2, \RN{2}}\pare{\overline{s^1_t}} \leq  \av{h}_{W^{1, \infty}}\cP\pare{\av{h}_{W^{1, \infty}}} \int \frac{\btp}{2\sin^2\pare{\alpha/2}} \dd\alpha. 
\end{equation}
We can now consider the term $ \cJ_{5, 2, \RN{1}} $. To estimate this term we proceed similarly as for $\mathbb{L}_2$ and find that
 \begin{multline}\label{eq:cJ5}
 \cJ_5 \pare{\overline{s^1_t}} \leq \av{h}_{W^{1, \infty}} \cP\pare{\av{h}_{W^{1, \infty}}}  \int   \frac{h'\pare{\overline{s^1_t}}\alpha - \bar{\theta} }{2\sin^3\pare{\alpha/2}}\cos\pare{\alpha/2} \dd \alpha \\
  +  \av{h}_{W^{1, \infty}}\cP\pare{\av{h}_{W^{1, \infty}}} \int \frac{\btp}{2\sin^2\pare{\alpha/2}} \dd\alpha + \cP\pare{\av{h}_{W^{1, \infty}}} \av{h}_{W^{1, \infty}}\av{h'}_{L^{\infty}}. 
 \end{multline}

\subsubsection*{Bound of $ \cJ_6 $}

Let us rewrite $ \cJ_6 $ as 
\begin{equation*}
 \cJ_6 =    \frac{1}{2 \pi}  \int    \frac{r\cos \alpha \ \pare{2r \sin^2\pare{\alpha / 2}+\theta \cos(\alpha)} + r\pare{r-\theta}\sin^2\alpha }{ 4 r \pare{r-\theta}\sin^2\pare{\alpha/2} + \theta^2 }  \pare{r-\theta}'  \dd \alpha , 
\end{equation*}
and let us notice that
\begin{equation*}
\cJ_6 =  \frac{1}{2\pi} \int \bra{1+\frac{\theta}{r} + \cP\pare{\av{\theta}_{L^{ \infty}}^2}} \bra{ \frac{\cos(\alpha)}{2} + \frac{\theta \cos^2\alpha }{4r\sin^2\pare{\alpha / 2}} + \pare{1-\frac{\theta}{r}}\cos^2\pare{\alpha / 2} } \pare{r-\theta}'\dd\alpha.
\end{equation*}
Hence computations similar to the ones performed for the term $ \cJ_5 $ lead us to the estimate
 \begin{multline}\label{eq:cJ6}
 \cJ_6 \pare{\overline{s^1_t}} \leq  \frac{1}{2\pi} \int \pare{\frac{\cos(\alpha) }{2}+ \cos^2\pare{\alpha / 2}} \pare{\brp-\btp }\dd\alpha  \\
+  \av{h}_{W^{1, \infty}} \cP\pare{\av{h}_{W^{1, \infty}}}  \int   \frac{h'\pare{\overline{s^1_t}}\alpha - \bar{\theta}}{2\sin^3\pare{\alpha/2}}\cos\pare{\alpha/2} \dd \alpha \\
  +  \av{h}_{W^{1, \infty}}\cP\pare{\av{h}_{W^{1, \infty}}} \int \frac{\btp}{2\sin^2\pare{\alpha/2}} \dd\alpha + \cP\pare{\av{h}_{W^{1, \infty}}} \av{h}_{W^{1, \infty}} \av{h'}_{L^\infty}  .
 \end{multline}

\subsubsection*{Bound of $ \cJ_7 $}
Let us rewrite $ \cJ_7 $ as 

\begin{equation*}
\begin{aligned}
\cJ_7 = & \ -  \frac{1}{2 \pi} \int    \frac{  r  \pare{2r\sin^2\pare{\alpha / 2} +\theta \cos \alpha}  \sin \alpha    }{\pare{4r\pare{r-\theta}\sin^2\pare{\alpha / 2} + \theta^2  }^2 }  \pare{2 r r' + 2 \pare{r-\theta }\pare{ r \sin(\alpha) -r' \cos(\alpha) }} \pare{r-\theta}'   \dd \alpha .
\end{aligned}
\end{equation*}
We observe that
\begin{equation*}
\begin{aligned}
\frac{2 r r' + 2 \pare{r-\theta }\pare{ r \sin(\alpha) -r' \cos(\alpha) }}{2r \sin\pare{\alpha / 2}} & = 2r \cos\pare{\alpha/2} + \cP\pare{\av{h'}_{L^\infty}}, \\
\pare{\frac{4r^2\sin^2\pare{\alpha / 2}}{4r\pare{r-\theta}\sin^2\pare{\alpha / 2} + \theta^2}}^2 & = 1+\frac{2\theta}{r} + \cP\pare{\av{h'}_{L^\infty}^2}, \\
\frac{r\ \sin(\alpha)}{\pare{ 2r\sin\pare{\alpha/2} }^2 } & = \frac{\cot\pare{\alpha / 2}}{4r}, \\
\frac{ 2r\sin^2\pare{\alpha / 2} + \theta\cos(\alpha) }{2 r \sin \pare{\alpha/2}} & = \sin\pare{\alpha / 2} + \frac{\theta\cos(\alpha)}{2r\sin\pare{\alpha / 2}}, 
\end{aligned}
\end{equation*}
and this in turn allow us to deduce that
\begin{equation}\label{eq:cJ7}
\cJ_7\pare{\overline{s^1_t}} \leq - \frac{1}{2\pi}\int\pare{\brp - \btp}\cos^2\pare{ \alpha / 2 }\dd\alpha +  \cP\pare{\av{h}_{W^{1, \infty}}} \av{h}_{W^{1, \infty}} \av{h'}_{L^\infty}  . 
\end{equation}

We now sum \eqref{eq:cJ6} and \eqref{eq:cJ7} and obtain the estimate
 \begin{multline}\label{eq:cJ6cJ7}
 \cJ_6 \pare{\overline{s^1_t}} + \cJ_7 \pare{\overline{s^1_t}}  \leq  - \frac{1}{2}\frac{1}{2\pi} \int \btp \cos(\alpha) \ \dd\alpha  
+  \av{h}_{W^{1, \infty}} \cP\pare{\av{h}_{W^{1, \infty}}}  \int   \frac{h'\pare{\overline{s^1_t}}\alpha - \bar{\theta} }{2\sin^3\pare{\alpha/2}}\cos\pare{\alpha/2} \dd \alpha \\
  +  \av{h}_{W^{1, \infty}}\cP\pare{\av{h}_{W^{1, \infty}}}  \int \frac{\btp}{2\sin^2\pare{\alpha/2}} \dd\alpha + \cP\pare{\av{h}_{W^{1, \infty}}} \av{h}_{W^{1, \infty}} \av{h'}_{L^\infty} .
 \end{multline}

\subsubsection*{The equation for the evolution of $|h'|_{L^\infty}$}
We combine the estimates \eqref{eq:cJ1},  \eqref{eq:cJ2},  \eqref{eq:cJ3},  \eqref{eq:cJ4},  \eqref{eq:cJ5} and  \eqref{eq:cJ6cJ7} and use 

$$
h'\pare{\overline{s^1_t}} =\max_{s}\{h'(s,t)\}
$$

in order to obtain the estimate
\begin{multline*}
\ddt \max_{s}\{h'(s,t)\}+ \gamma'\pare{\overline{s^1_t}} \cdot \dot{M}\pare{t} \\
\leq - \frac{1}{4} \frac{1}{2\pi} \bra{1- \av{h}_{W^{1, \infty}} \cP\pare{\av{h}_{W^{1, \infty}}}} \pv \int \frac{\btp }{2\sin^2\pare{\alpha/2}}  \dd \alpha - \frac{1}{4} \frac{1}{2 \pi} \int \btp \pare{ 1 + \cos(\alpha)} \dd\alpha \\
 + \frac{h'\pare{ \overline{s^1_t} }}{4} 
+  \av{h}_{W^{1, \infty}}    \cP\pare{\av{h}_{W^{1, \infty}}} \pv \int   \frac{h'\pare{\overline{s^1_t}}\alpha - \theta \pare{\overline{s^1_t}, \overline{s^1_t} -\alpha } }{2\sin^3\pare{\alpha/2}}\cos\pare{\alpha/2} \dd \alpha 
 + \cP\pare{\av{h}_{W^{1, \infty}}}\av{h}_{W^{1, \infty}} \av{h'}_{L^\infty} . 
\end{multline*}
Let us remark that
\begin{equation*}
- \frac{1}{4} \frac{1}{2\pi} \int \btp \ \dd\alpha + \frac{h'\pare{ \overline{s^1_t} }}{4} = 0 , 
\end{equation*}
\begin{align*}
 \gamma'\pare{s } \cdot \dot{M}\pare{t} = \frac{1}{4} \frac{1}{2\pi} \int h' \pare{s -\alpha} \ \cos(\alpha) \ \dd\alpha , && \forall \ s\in \bS^1, 
\end{align*}
which we use combined with \eqref{eq:Lambda_alternative} in order to derive that
\begin{equation*}
\ddt \max_{s}\{h'(s,t)\}
\leq - \frac{1}{4} \frac{1}{2\pi} \bra{1- \av{h}_{W^{1, \infty}} \cP\pare{\av{h}_{W^{1, \infty}}}} \int \frac{\btp }{2\sin^2\pare{\alpha/2}}  \dd \alpha    + \cP\pare{\av{h}_{W^{1, \infty}}}\av{h}_{W^{1, \infty}} \av{h'}_{L^\infty} . 
\end{equation*}

Similar computations allow us to control the positive quantity 
$$
-\min_{s}\{h'(s,t)\}= -h'\pare{\underline{s^1_t}} 
$$ 
as
\begin{equation*}
 - \ddt \min_{s}\{h'(s,t)\} 
\leq - \frac{1}{4} \frac{1}{2\pi} \bra{1- \av{h}_{W^{1, \infty}} \cP\pare{\av{h}_{W^{1, \infty}}}}  \int \frac{- \utp }{2\sin^2\pare{\alpha/2}}  \dd \alpha    + \cP\pare{\av{h}_{W^{1, \infty}}}\av{h}_{W^{1, \infty}} \av{h'}_{L^\infty} ,  
\end{equation*}
so that we can estimate the evolution of $ \av{h'\pare{t}}_{L^\infty} = \max \set{h'\pare{\overline{s^1_t}, t}, \ - h'\pare{\underline{s^1_t}}  } $ as
\begin{equation}
\label{eq:control_h'_in_Linfty}
\begin{aligned}
\ddt \av{h'}_{L^\infty} 
&\leq - \frac{1}{4} \frac{1}{2\pi} \bra{1- \av{h}_{W^{1, \infty}} \cP\pare{\av{h}_{W^{1, \infty}}}} \max\set{\int \frac{\btp}{2\sin^2\pare{\alpha/2}}  \dd \alpha ,  \int \frac{ -\utp  }{2\sin^2\pare{\alpha/2}}  \dd \alpha} \\
&\quad   + \cP\pare{\av{h}_{W^{1, \infty}}}\av{h}_{W^{1, \infty}} \av{h'}_{L^\infty} .  
\end{aligned}
\end{equation}
We combine the estimates \eqref{eq:control_h_in_Linfty} and \eqref{eq:control_h'_in_Linfty} and we deduce 
\begin{multline}\label{eq:est1}
\ddt \av{h}_{W^{1, \infty}} 
\leq - \frac{1}{4} \frac{1}{2\pi} \bra{1- \av{h}_{W^{1, \infty}} \cP\pare{\av{h}_{W^{1, \infty}}}}  \max\set{\int \frac{\bt}{2\sin^2\pare{\alpha/2}}  \dd \alpha ,  \int \frac{ -\ut  }{2\sin^2\pare{\alpha/2}}  \dd \alpha} \\
- \frac{1}{4} \frac{1}{2\pi} \bra{1- \av{h}_{W^{1, \infty}} \cP\pare{\av{h}_{W^{1, \infty}}}}  \max\set{\int \frac{\btp}{2\sin^2\pare{\alpha/2}}  \dd \alpha ,  \int \frac{ -\utp  }{2\sin^2\pare{\alpha/2}}  \dd \alpha}\\
   + \cP\pare{\av{h}_{W^{1, \infty}}}\av{h}_{W^{1, \infty}} \av{h'}_{L^\infty} . 
\end{multline}
We apply Lemma \ref{eq:Lambda_Linfty} in order to state that
\begin{equation*}
\av{h'}_{L^\infty} \leq C \max\set{\int \frac{\btp}{2\sin^2\pare{\alpha/2}}  \dd \alpha ,  \int \frac{ -\utp  }{2\sin^2\pare{\alpha/2}}  \dd \alpha}   . 
\end{equation*}
As a consequence we can further simplify \eqref{eq:est1} and conclude that
\begin{multline}
\label{eq:est2}
\ddt \av{h}_{W^{1, \infty}} 
\leq - \frac{1}{4} \frac{1}{2\pi} \bra{1- \av{h}_{W^{1, \infty}} \cP\pare{\av{h}_{W^{1, \infty}}}}  \max\set{\int \frac{\bt}{2\sin^2\pare{\alpha/2}}  \dd \alpha ,  \int \frac{ -\ut  }{2\sin^2\pare{\alpha/2}}  \dd \alpha} \\
- \frac{1}{4} \frac{1}{2\pi} \bra{1- \av{h}_{W^{1, \infty}} \cP\pare{\av{h}_{W^{1, \infty}}}}  \max\set{\int \frac{\btp}{2\sin^2\pare{\alpha/2}}  \dd \alpha ,  \int \frac{ -\utp  }{2\sin^2\pare{\alpha/2}}  \dd \alpha}. 
\end{multline}

As a consequence, we can ensure that the right hand side of \eqref{eq:est2} is strictly negative if $ \av{h}_{W^{1, \infty}} $ is sufficiently small. We obtain that there exists a positive constant $ 0< \cC \ll 1 $ such that if $ \av{h_0}_{W^{1, \infty}}\leq \cC $ then for each $ t > 0 $
\begin{equation*}
\av{h\pare{t}}_{W^{1, \infty}}\leq \av{h_0}_{W^{1, \infty}}. 
\end{equation*}

We prove now the pointwise decay in time of $ \av{h\pare{t}}_{W^{1, \infty}} $. From \eqref{eq:control_h'_in_Linfty} we deduce, using the Poincar\'e-type inequality \eqref{eq:Lambda_Linfty}, that there exists a $ \delta > 0 $ s.t.
\begin{equation}\label{eq:exp_decay_h'}
\av{h'\pare{t}}_{L^\infty}\leq \av{h'_0}_{L^\infty} e^{-\delta t}. 
\end{equation}
This concludes the proof of Proposition \ref{prop:W1infty_enest}. \hfill $ \Box $

\section{\emph{A priori} estimates in $ H^{1} $}\label{sec:H1inftydecay}
The purpose of this section is to obtain the parabolic gain of regularity 
$$
L^2\pare{ 0,T;H^{3/2}}
$$
for the solution. Although these estimates are lower order compared to the pointwise estimates, this regularity is necessary in order we can pass to the limit in the weak formulation of the N-Peskin problem. 

\begin{prop}\label{prop:unif_bounds_L2h}
Let  $ T^\star \in (0,\infty] $ and $h = h(s,t)$ be a $\cC \pare{ \bra{ 0,T^\star}; \cC^2 } $ solution of \eqref{eq:eveqh2} such that $ \left. h\right|_{t=0} = h_0 $. Assume that
$$ 
\av{h_0}_{W^{1, \infty}}< c_0 
$$ 
with $c_0$ the constant in Proposition \ref{prop:W1infty_enest}. Then, for all $T\leq T^*$, there exists a  $ C(T) \in \pare{0, \infty} $ depending on $ T $ only such that 
\begin{equation*}
h \in L^2  \pare{\bra{0, T}; H^{3/2} \pare{\bS^1}}.
\end{equation*}
and the following bound holds true
\begin{equation*}
\norm{h}_{ L^2  \pare{\bra{0, T}; H^{3/2} \pare{\bS^1}}} \leq C(T). 
\end{equation*}
\end{prop}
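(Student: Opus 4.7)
The plan is to perform an $\dot{H}^1$ energy estimate on \eqref{eq:eveqh2} and use the crucial observation, already highlighted in the methodology subsection at the end of Section~\ref{sec2}, that the equation is parabolic at the highest order: testing the linearization $-\tfrac{1}{4}\Lambda h$ against $-h''$ produces exactly the dissipation $\tfrac{1}{4}\|h\|_{\dot{H}^{3/2}}^2$. Thanks to Proposition~\ref{prop:W1infty_enest} the Lipschitz norm $\|h(t)\|_{W^{1,\infty}}$ stays below $c_0$ for all $t<T^\star$, so the nonlinear contributions coming from $J_1, J_2, J_3$ in \eqref{eq:J's} can be absorbed into that dissipation, leaving only lower order $L^2$ terms that are handled by Grönwall.

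Concretely, writing the equation in the form $\partial_t h = -\tfrac{1}{4}\Lambda h + \cN(h)$ as in Definition~\ref{definition1} and testing against $-h''$ yields
\[
\tfrac{1}{2}\tfrac{d}{dt}\|h'\|_{L^2}^2 + \tfrac{1}{4}\|h\|_{\dot{H}^{3/2}}^2 = -\int \cN(h)\,h''\,ds,
\]
where I used $-\int (\Lambda h)h''\,ds = \|\Lambda^{1/2}h'\|_{L^2}^2 = \|h\|_{\dot{H}^{3/2}}^2$. The task is to bound the right hand side. Splitting $\cN(h)$ along the same lines as in Section~\ref{sec:W1inftydecay}, the contributions $\gamma\cdot\ddt M$, $J_{1,1}$, $J_{1,3}$, $J_3$, and the parts of $J_1$, $J_2$ that carry only first derivatives of $h$ are controlled directly: the kernels are uniformly bounded in terms of $\cP(\|h\|_{W^{1,\infty}})$ thanks to the smallness assumption, and after one integration by parts they produce terms bounded by $\cP(\|h\|_{W^{1,\infty}})\|h'\|_{L^2}\,\|h\|_{\dot{H}^{3/2}}$, which can be split by Young's inequality into $\epsilon\|h\|_{\dot{H}^{3/2}}^2 + C_\epsilon \|h'\|_{L^2}^2$.

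The main obstacle is the highest-order piece, namely the $h''(s-\alpha)$ term inside $J_2$ (cf.\ the second line of \eqref{eq:J's}), since once tested against $h''(s)$ it becomes a genuinely double-singular integral
\[
\iint K(h(s),h(s-\alpha),\alpha)\,h''(s-\alpha)\,h''(s)\,d\alpha\,ds,
\]
with $K(h,h,\alpha) \sim -\tfrac{1}{4(2\sin^2(\alpha/2))}$ to leading order (this is precisely the coefficient that, at the linear level in Section~\ref{sec:linearized_h}, produced $-\tfrac{1}{4}\Lambda h$). The plan to handle this is the symmetrization trick displayed in the methodology paragraph of Section~\ref{sec2}: after swapping the roles of $s$ and $\sigma = s-\alpha$ and averaging, the leading part rewrites as
\[
-\tfrac{1}{8}\iint \frac{(h'(s)-h'(\sigma))^2}{2\sin^2((s-\sigma)/2)}\,d\sigma\,ds + \text{nonlinear correction} = -\tfrac{1}{4}\|h\|_{\dot{H}^{3/2}}^2 + R(h),
\]
reinforcing the parabolic dissipation, while the correction $R(h)$ is a trilinear expression bounded by $\cP(\|h\|_{W^{1,\infty}})\|h\|_{W^{1,\infty}}\|h\|_{\dot{H}^{3/2}}^2$ using pointwise Taylor expansions of the $h$-dependent factors in $K$ exactly as done in Section~\ref{sec:W1inftydecay}. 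Carrying out this symmetrization rigorously for the full fully nonlinear kernel, rather than for the toy model, is the delicate point.

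Combining all contributions, I would arrive at
\[
\tfrac{d}{dt}\|h'\|_{L^2}^2 + c_1\bigl(1-\cP(\|h\|_{W^{1,\infty}})\|h\|_{W^{1,\infty}}\bigr)\|h\|_{\dot{H}^{3/2}}^2 \leq C\,\cP(\|h\|_{W^{1,\infty}})\|h'\|_{L^2}^2,
\]
for some $c_1>0$. Choosing $c_0$ smaller than in Proposition~\ref{prop:W1infty_enest} if necessary, the prefactor $(1-\cP(\|h\|_{W^{1,\infty}})\|h\|_{W^{1,\infty}})$ is bounded below by $1/2$ uniformly in time, and the right-hand side is controlled by $C\|h'\|_{L^2}^2$ since $\|h\|_{W^{1,\infty}}\leq c_0$. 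Grönwall's inequality then gives $\|h(t)\|_{\dot{H}^1}^2 \leq e^{Ct}\|h_0\|_{\dot{H}^1}^2$, and integrating in time the dissipation on the left yields $\|h\|_{L^2([0,T];\dot{H}^{3/2})}^2 \leq C(T)$ with $C(T)\lesssim e^{CT}\|h_0'\|_{L^2}^2 \lesssim e^{CT} c_0^2$, which is the claim. Note the bound depends only on $T$ (through the exponential factor), on $c_0$, and on $\|h_0\|_{W^{1,\infty}}\hookrightarrow \|h_0\|_{\dot{H}^1}$, not on any higher regularity of the smooth solution assumed to exist.
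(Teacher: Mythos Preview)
Your proposal is correct and follows essentially the same approach as the paper: extract the linear $-\tfrac14\Lambda h$ dissipation, control the nonlinear remainder using the $W^{1,\infty}$ smallness from Proposition~\ref{prop:W1infty_enest} together with a symmetrization in the $(s,\sigma)$ variables for the most singular piece, and close with Gr\"onwall. The paper organizes the computation slightly differently: instead of testing the $h$-equation against $-h''$, it works directly with the evolution equation for $h'$ already derived in Section~\ref{sec:W1inftydecay}, rewrites it as $\partial_t h'+\tfrac14\Lambda h'=\sum_j\cI_j$ with each $\cI_j$ purely nonlinear (the linear parts of the $\cJ_j$ having been subtracted off), and then tests against $h'$. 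This is of course equivalent to your formulation after one integration by parts in $s$, but it has the practical advantage that all the integrations by parts in $\alpha$ needed to remove $h''(s-\alpha)$ have already been carried out in Section~\ref{sec:W1inftydecay}, so no new kernel manipulations are required.

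One small inaccuracy worth flagging: the raw kernel multiplying $h''(s-\alpha)$ in $J_2$ is in fact \emph{bounded} (of order $\sin^2(\alpha/2)$), not $\sim 1/\sin^2(\alpha/2)$ as you wrote; the singular $-\tfrac14\Lambda h$ structure in the linearization arises only after integrating by parts in $\alpha$ and combining contributions from several $J_i$. So the term you single out is not ``double-singular'' in the sense you describe, but it is still the obstruction because a bounded-kernel convolution of $h''(\cdot-\alpha)$ tested against $h''$ would produce an uncontrolled $\|h''\|_{L^2}^2$. The resolution is exactly the integration by parts plus symmetrization you outline, and the paper's key step (the estimate of the term denoted $W_{\RN{1},\RN{1}}$) is precisely the trilinear bound $\lesssim \cP(|h|_{W^{1,\infty}})|h|_{W^{1,\infty}}|\Lambda^{1/2}h'|_{L^2}^2$ you anticipate.
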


\begin{proof}

All along the proof we denote with $ 0<\nu\ll 1 $ a positive constant whose explicit value may vary from line to line. \\

Let us recall that the evolution equation for $ h' $ can be written as
\begin{equation*}
\partial_t h' + \gamma'\cdot \dot{M} = \sum_{j=1}^7 \cJ_j , 
\end{equation*}
where the explicit formulations of the terms $ \cJ_j, \ j=1, \ldots, 7 $ are given in \eqref{eq:cJ's} and \eqref{eq:cJ's2}. Using computations similar to the ones performed in \eqref{eq:cJ1}, \eqref{eq:cJ2}, \eqref{eq:cJ3}, \eqref{eq:cJ4}, \eqref{eq:cJ5} and \eqref{eq:cJ6cJ7}, which isolate the linear (in $ h $) contribution of every $ \cJ_j $, we reformulate the evolution equation for $ h' $ as
\begin{multline*}
\partial_t h' +\frac{1}{4} \ \Lambda h' = \pare{ \cJ_2 + \frac{1}{4} \ \Lambda h' } +\pare{  \cJ_1 + \frac{1}{4}\frac{1}{2\pi} \int \theta' \ \cos(\alpha) \ \dd\alpha } + \pare{  \cJ_3 -\frac{h'\pare{s}}{4}  - \frac{1}{2\pi} \int \theta' \ \cos(\alpha) \ \dd\alpha } \\
 + \pare{  \cJ_4 +\frac{h'\pare{s}}{4} + \frac{1}{2}\frac{1}{2\pi} \int \theta' \ \cos(\alpha) \ \dd\alpha } + \cJ_5 +  \pare{\cJ_6 + \cJ_7 + \frac{1}{2}\frac{1}{2\pi} \int \theta' \ \cos(\alpha) \ \dd\alpha }, 
\end{multline*}
so that defining 
\begin{equation*}
\begin{aligned}
\cI_1 = & \   \cJ_1 + \frac{1}{4}\frac{1}{2\pi} \int \theta' \  \cos \alpha \ \dd\alpha , \\
\cI_2 = & \  \cJ_2 + \frac{1}{4} \ \Lambda h', \\
\cI_3 = &  \ \cJ_3 -\frac{h'\pare{s}}{4}  - \frac{1}{2\pi} \int \theta' \ \cos(\alpha) \ \dd\alpha , \\
\cI_4 = & \ \cJ_4 +\frac{h'\pare{s}}{4} + \frac{1}{2}\frac{1}{2\pi} \int \theta' \ \cos(\alpha) \ \dd\alpha , \\
\cI_5 = & \ \cJ_5
, \\
\cI_6 = & \ \cJ_6 + \cJ_7 + \frac{1}{2}\frac{1}{2\pi} \int \theta' \ \cos(\alpha) \ \dd\alpha , 
\end{aligned}
\end{equation*}
the evolution equation for $ h' $ becomes
\begin{equation}\label{eq:eveqh'Is}
\partial_t h' + \frac{1}{4} \ \Lambda h' = \sum_{j=1}^6 \cI_j .
\end{equation}
The advantage in the formulation \eqref{eq:eveqh'Is} is that the terms $ \cI_j $ on the right hand side are all nonlinear in $ h $. Furthermore, we observe that there are three families of contributions. When testing with $h'$ and integrating, the terms $\cI_j$ can be written in one of the following three ways:
\begin{align*}
\int \mathcal{N}(h,h')h'h' \dd s, && \int \mathcal{N}(h,h')\Lambda h h' \dd s
&& \text{ and } &&
\int \mathcal{N}(h,h')\Lambda h'h' \dd s,
\end{align*}
where $\mathcal{N}$ denotes a nonlinear term.

Once we are equipped with the estimates in $W^{1,\infty},$ this part is rather straightforward and as such we only sketch the proof. We start with the term $\cI_2$, we define
\begin{equation*}
\tilde{\cI}_2 = \int \cI_2\pare{s}h'\pare{s}\dd s, 
\end{equation*}
 thus, using the splitting defined in \eqref{eq:defcJ2} for the term $ \cJ_2 $, we find that
$$
\tilde{\cI}_{2}=\int \cI_2(s) h'(s)\dd s =\tilde{\cI}_{2,1}+\tilde{\cI}_{2,2}+\tilde{\cI}_{2,3},
$$
where $ \tilde{\cI}_{2,3} $ contains the lower order terms due to the cancellation of the linear part of the equation.

Using the Taylor expansion together with H\"older and Poincar\'e inequalities, we find that 
\begin{equation}
\label{eq:tildeI2,1_1}
\begin{aligned}
\tilde{\cI}_{2,1}&=-  \frac{1}{2}\frac{1}{2\pi} \iint    \frac{
 -  r' \partial_\alpha \theta \cos(\alpha) -  \pare{r-\theta} r' \sin(\alpha) +r \partial_\alpha\theta \sin \alpha
 }{4r^2 \sin^2\pare{\alpha / 2}}\frac{4r^2 \sin^2\pare{\alpha / 2}}{ 4r\pare{r-\theta}\sin^2\pare{\alpha / 2} + \theta^2}  \  \eta ' h'(s)   \dd \alpha \dd s\\
&\leq -  \frac{1}{2}\frac{1}{2\pi} \iint    \frac{
 -  r' \partial_\alpha \theta \cos(\alpha) -  \pare{r-\theta} r' \sin(\alpha) +r \partial_\alpha\theta \sin \alpha
 }{4r^2 \sin^2\pare{\alpha / 2}}\left[1 +\sum_{\ell=1}^\infty\left(\frac{\theta }{r } -\frac{\theta^2}{4r^2\sin^2\pare{\alpha/2} }\right)^\ell\right] \  \theta ' h'(s)   \dd \alpha \dd s\\
 &\quad + \cP\pare{\av{h}_{W^{1, \infty}}}   \av{h}_{W^{1, \infty}} \av{h'}_{L^2}^2.
 \end{aligned}
\end{equation}
 Let us define
 \begin{align*}
  W_{ \RN{1} } & =   \frac{1}{2}\frac{1}{2\pi} \iint    \frac{
   r' \partial_\alpha \theta 
 }{4r^2 \sin^2\pare{\alpha / 2}} \pare{1+\frac{\theta}{r}}  \theta ' h'(s)   \dd \alpha \dd s, \\
   W_{ \RN{2} } & =   \frac{1}{2}\frac{1}{2\pi} \iint    \frac{
  r r' \cot\pare{\alpha / 2}}{2r^2} \  \theta ' h'(s)   \dd \alpha \dd s , \\
   W_{ \RN{3} } & = -  \frac{1}{2}\frac{1}{2\pi} \iint    \frac{ r \partial_\alpha\theta \cot\pare{\alpha/2}
 }{2r^2 } \  \theta ' h'(s)   \dd \alpha \dd s.
 \end{align*}

With the above decomposition we find that
\begin{multline*}
-  \frac{1}{2}\frac{1}{2\pi} \iint    \frac{
 -  r' \partial_\alpha \theta \cos(\alpha) -  \pare{r-\theta} r' \sin(\alpha) +r \partial_\alpha\theta \sin \alpha
 }{4r^2 \sin^2\pare{\alpha / 2}}\left[1 +\sum_{\ell=1}^\infty\left(\frac{\theta }{r } -\frac{\theta^2}{4r^2\sin^2\pare{\alpha/2} }\right)^\ell\right] \  \theta ' h'(s)   \dd \alpha \dd s
\\
- W_{ \RN{1} } - W_{\RN{2} } - W_{ \RN{3} } = {\bf J} ,
\end{multline*}
where $ {\bf J} $ is a operator with a regularizing kernel satisfying
$$
{\bf J}\leq  \cP\pare{\av{h}_{W^{1, \infty}}}   \av{h}_{W^{1, \infty}} \av{h'}_{L^2}^2 .
 $$ 
Then, it suffices to control the singular part of the integral $ \tilde{\cI}_{2 , 1} $ composed by the simplified terms $ W_{j}, j=\RN{1}, \RN{2}, \RN{3} $.

We prove now the estimates for the terms $ W_{\RN{2}} $ and $ W_{\RN{3}} $. We perform the computations for the term  $ W_{\RN{2}} $ being the other identical. We use the boundedness of $ \av{h}_{W^{1, \infty}} $ in order to argue that
$$
 W_{ \RN{2} } =   -C \iint    \frac{
  r r' \cot\pare{\alpha / 2}}{2r^2} \  h'(s-\alpha) h'(s)   \dd \alpha \dd s= C \int   
  r r'  h'(s) \Lambda h(s) \dd s \leq \cP\pare{\av{h}_{W^{1, \infty}}} \av{h}_{W^{1, \infty}}
  $$    
  \begin{equation*}
 W_{\RN{3}} \leq \cP\pare{\av{h}_{W^{1, \infty}}} \av{h}_{W^{1, \infty}} \iint \av{\cot\pare{\alpha/2}} \av{\theta'}\dd\alpha \dd s   
\end{equation*}
thus we  use the embedding $ L^2\pare{\bS^2}\hra L^1\pare{\bS^2} $ and the fact that $ \av{\cot\pare{\alpha/2}}^2\lesssim \pare{ \sin\pare{\alpha/2} }^{-2} $ and we deduce the control
\begin{equation*}
W_{\RN{3}} \leq \cP\pare{\av{h}_{W^{1, \infty}}} \av{h}_{W^{1, \infty}} \av{\Lambda^{1/2} h'}_{L^2}. 
\end{equation*}
We conclude that
\begin{equation}\label{eq:WII+WIII}
W_{\RN{2}} + W_{\RN{3}} \leq \nu  \av{\Lambda^{1/2} h}_{L^2}^2 + \frac{\cP\pare{\av{h}_{W^{1, \infty}}} }{\nu} \av{h}_{W^{1, \infty}}^2. 
\end{equation}
We study now the term $ W_{\RN{1}} $, which is the more singular of the three. Let us reformulate it as
\begin{equation*}
W_{\RN{1}} = W_{\RN{1}, \RN{1}} + W_{\RN{1}, \RN{2}}, 
\end{equation*}
where
\begin{align*}
W_{\RN{1}, \RN{1}} &  =   \frac{1}{2}\frac{1}{2\pi} \iint    \frac{
   r' \partial_\alpha \theta 
 }{4r^2 \sin^2\pare{\alpha / 2}}  \theta ' h'(s)   \dd \alpha \dd s, \\
 W_{\RN{1}, \RN{2}}  & =   \frac{1}{2}\frac{1}{2\pi} \iint    \frac{
   r' \theta \partial_\alpha \theta 
 }{4r^3 \sin^2\pare{\alpha / 2}}  \theta ' h'(s)   \dd \alpha \dd s.
\end{align*} 
The term $ W_{\RN{1}, \RN{2}}  $ can be controlled using computations close to the ones performed to control the terms $ W_{\RN{2}} $ and $ W_{\RN{3}} $ obtaining that
\begin{equation}\label{eq:WI,II}
W_{\RN{1}, \RN{2}}  \leq \nu  \av{\Lambda^{1/2} h}_{L^2}^2 + \frac{\cP\pare{\av{h}_{W^{1, \infty}}} }{\nu} \av{h}_{W^{1, \infty}}^2. 
\end{equation}
The term $ W_{\RN{1}, \RN{1}} $ is indeed the more singular one. Substituting the explicit values of the functions $ r=1+h\pare{s} $ and $ \theta = h\pare{s} - h\pare{s-\alpha} $ and changing variables, we obtain that
\begin{align*}
W_{\RN{1}, \RN{1}} & = \frac{1}{2}\frac{1}{2\pi} \iint \frac{\pare{h'\pare{s}}^2 h'\pare{s-\alpha}}{4\pare{1+h\pare{s}}^2\sin^2\pare{\alpha/2}} \pare{h'\pare{s} - h'\pare{s-\alpha}}\dd\alpha  \dd s, \\
& = \frac{1}{2}\frac{1}{2\pi} \iint \frac{\pare{h'\pare{s}}^2 h'\pare{\sigma}}{4\pare{1+h\pare{s}}^2\sin^2\pare{\frac{s-\sigma}{2}}} \pare{h'\pare{s} - h'\pare{\sigma}}\dd\sigma  \dd s , \\
& = -\frac{1}{2}\frac{1}{2\pi} \iint \frac{\pare{h'\pare{\sigma}}^2 h'\pare{s}}{4\pare{1+h\pare{\sigma }}^2\sin^2\pare{\frac{s-\sigma}{2}}} \pare{h'\pare{s} - h'\pare{\sigma}}\dd\sigma  \dd s.
\end{align*}
Then, we find that
 \begin{equation}
 \label{eq:WI,I}
 \begin{aligned}
 W_{\RN{1}, \RN{1}} & = \frac{1}{2}\frac{1}{2\pi} \iint \frac{h'\pare{s} h'\pare{\sigma}\theta'}{2\sin^2\pare{\frac{s-\sigma}{2}}} \bra{ \frac{h'\pare{s}}{\pare{1+h\pare{s}}^2} - \frac{h'\pare{\sigma}}{\pare{1+h\pare{\sigma}}^2} }\dd\sigma \dd s, \\
 & \leq C \av{h}_{W^{1, \infty}}^2 \pare{ \iint \pare{\frac{\theta'	}{2\sin \pare{\frac{s-\sigma}{2}}}}^2 \dd s \dd \sigma }^{1/2} \pare{ \iint  \bra{ \frac{\frac{h'\pare{s}}{\pare{1+h\pare{s}}^2} - \frac{h'\pare{\sigma}}{\pare{1+h\pare{\sigma}}^2} }{2\sin\pare{\frac{s-\sigma}{2}}}     }^2 \dd\sigma \dd s }^{1/2}, \\
 & \leq \cP\pare{\av{h}_{W^{1, \infty}}}\av{h}_{W^{1, \infty}} \av{\Lambda^{1/2}h'}_{L^2}\av{\Lambda^{1/2}\pare{ \frac{h'}{\pare{1+h}^2} }}_{L^2}, \\
  & \leq \cP\pare{\av{h}_{W^{1, \infty}}}\av{h}_{W^{1, \infty}}  \av{\Lambda^{1/2} h' }_{L^2}^2 + \cP\pare{\av{h}_{W^{1, \infty}}}  \av{h}_{W^{1, \infty}}^2. 
 \end{aligned}
 \end{equation}
 
Using the smallness of $h$ in $W^{1,\infty}$, the rest of terms can be handled similarly and we find that 
$$
\tilde{\cI}_{2}\leq \nu  \av{\Lambda^{1/2} h' }_{L^2}^2 + \frac{\cP\pare{\av{h}_{W^{1, \infty}}} }{\nu} \av{h}_{W^{1, \infty}}^2. 
$$

 We define
$$
\tilde{\cI}_1=\int \cI_1(s) h'(s) \ \dd s
$$
and decompose $\mathcal{J}_1$ as in \eqref{eq:dec_cJ1_1} to find that
\begin{multline*}
\tilde{\cI}_1\leq -\frac{1}{2}\frac{1}{r^2}\frac{1}{2\pi}\iint  \frac{\theta \theta' \cos (\alpha) }{2\sin^2\pare{\alpha / 2} }\bra{ 1 +\sum_{\ell=1}^\infty\left(\frac{\theta }{r } -\frac{\theta^2}{4r^2\sin^2\pare{\alpha/2} }\right)^\ell
} \ \partial_\alpha \bra{  \sin(\alpha) \pare{r-\theta} }h'(s)  \dd \alpha \dd s\\
 +\cP\pare{\av{h}_{W^{1, \infty}}}   \av{h}_{W^{1, \infty}} \av{h'}_{L^2}^2 .
\end{multline*}
In order to deduce the above estimate we used the fact that the integral defining the term $ \cJ_{1,1} $ is not singular. 
The term $ \tilde{\cI}_{1} $ is now in a form which resembles the one deduced for the term $ \tilde{\cI}_{2,1} $ in equation \eqref{eq:tildeI2,1_1}. Very similar computations allow us to produce the bound
\begin{equation*}
\tilde{\cI}_{1}\leq \nu  \av{\Lambda^{1/2} h' }_{L^2}^2 + \frac{\cP\pare{\av{h}_{W^{1, \infty}}} }{\nu} \av{h}_{W^{1, \infty}}^2. 
\end{equation*}

We decompose $\cI_3$ following our splitting of $\cJ_3$ (see Section \ref{sec:cJ3}) and find that 
\begin{align*}
\tilde{\cI}_{3}= & \ \int \cI_3(s) h'(s)\dd s , \\
= & \ \tilde{\cI}_{3,1}+\tilde{\cI}_{3,2}+\tilde{\cI}_{3,3} , \\
= & \ \tilde{\cI}_{3,1,\RN{1}}+\tilde{\cI}_{3,1,\RN{2}}+\tilde{\cI}_{3,2,\RN{1}}+\tilde{\cI}_{3,2,\RN{2}}+\tilde{\cI}_{3,2,\RN{3}}+\tilde{\cI}_{3,3},
\end{align*}
where, as before, $\tilde{\cI}_{3,3} $ contains lower order terms due to the cancellation of the linear part of the equation. The terms $\cI_{3,1,j}, \ j=\RN{1}, \RN{2}$ can be estimated using computations similar to the ones performed in order to control $ \tilde{\cI}_{2, 1} $ and we find
$$
\tilde{\cI}_{3,1}\leq \nu  \av{\Lambda^{1/2} h}_{L^2}^2 + \frac{\cP\pare{\av{h}_{W^{1, \infty}}} }{\nu} \av{h}_{W^{1, \infty}}^2. 
$$
Now we estimate the term $\cI_{3,2,\RN{1}}$, the other terms being easier. We find that
\begin{align*}
\tilde{\cI}_{3,2,\RN{1}}&=-\frac{1}{4\pi} \iint \frac{r'\pare{r-\theta} \pare{1+\cos^2\alpha}}{2r\pare{r-\theta} + \frac{\theta^2}{2\sin^2\pare{\alpha / 2}}}\frac{\theta}{2\sin^2\pare{\alpha/2}} h'(s)\dd \alpha \dd s.
 \end{align*}
We study now the more singular contribution of $ \tilde{\cI}_{3, 2, \RN{1}} $, i.e. the integral
\begin{equation*}
-\frac{1}{4\pi} \iint  \frac{r'}{r}  \frac{\theta}{2\sin^2\pare{\alpha/2}} h'(s)\dd \alpha \dd s = -\frac{1}{4\pi} \int  \frac{ \Lambda h\pare{s} h'(s) }{1+h\pare{s}}  \dd s \leq  \nu  \av{\Lambda^{1/2} h}_{L^2}^2 + \frac{\cP\pare{\av{h}_{W^{1, \infty}}} }{\nu} \av{h}_{W^{1, \infty}}^2. 
\end{equation*} 
 The rest of terms can be handled in a similar way and we conclude that 
$$
\tilde{\cI}_{3}\leq \nu  \av{\Lambda^{1/2} h}_{L^2}^2 + \frac{\cP\pare{\av{h}_{W^{1, \infty}}} }{\nu} \av{h}_{W^{1, \infty}}^2.
$$ 
The term $\tilde{\cI}_4$ resembles the term $\cI_3$ and as a consequence it can be handled using the same ideas. Then we obtain that
$$
\tilde{\cI}_{4}\leq \nu  \av{\Lambda^{1/2} h}_{L^2}^2 + \frac{\cP\pare{\av{h}_{W^{1, \infty}}} }{\nu} \av{h}_{W^{1, \infty}}^2.
$$ 
The term $\tilde{\cI}_5$ is similar to $\cI_{3,2,\RN{1}}$ and then we find that
$$
\tilde{\cI}_{5}\leq \nu  \av{\Lambda^{1/2} h}_{L^2}^2 + \frac{\cP\pare{\av{h}_{W^{1, \infty}}} }{\nu} \av{h}_{W^{1, \infty}}^2.
$$ 
The term $\tilde{\cI}_6$ can be estimated using the previous Taylor expansion together with the same ideas used to bound $\tilde{\cI}_1$. Then, choosing $\nu$ small enough and using the maximum principle for $\av{h}_{W^{1,\infty}}$, we conclude

$$
\ddt \av{h'}_{L^2}^2+\frac{1}{4}\av{\Lambda^{1/2}h'}_{L^2}^2\leq \cP\pare{\av{h}_{W^{1, \infty}}}   \av{h}_{W^{1, \infty}}^2\leq C.
$$
Invoking now Gronwall's inequality we find that
\begin{equation*}
\int_{0}^T \av{h\pare{t}}_{H^{3/2}}^2\dd t\leq C T.
\end{equation*}
\end{proof}

\section{Estimates for $ \partial_t h $} \label{sec:path}
The result we prove in the present section is the following one
\begin{prop}\label{prop:unif_bounds_path}
Let  $ T^\star \in(0,\infty]$ and $h = h(s,t)$ be a $\cC \pare{ \bra{ 0,T^\star}; \cC^2 } $ solution of \eqref{eq:eveqh2} such that $ \left. h\right|_{t=0} = h_0 $. Assume that
$$ 
\av{h_0}_{W^{1, \infty}}< c_0 
$$ 
with $c_0$ the constant in Proposition \ref{prop:W1infty_enest}. Then, for all $T\leq T^*$, there exists a  $ C(T) \in \pare{0, \infty} $ depending on $ T $ only such that 
\begin{equation*}
\partial_t h \in L^2  \pare{\bra{0, T}; H^{-1} \pare{\bS^1}}, 
\end{equation*}
and the following bound holds true
\begin{equation*}
\norm{\partial_t h}_{ L^2  \pare{\bra{0, T}; H^{-1} \pare{\bS^1}}} \leq C(T).
\end{equation*}
\end{prop}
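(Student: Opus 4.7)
The plan is to bound $\|\partial_t h(t)\|_{H^{-1}}$ pointwise in time by a quantity whose square is integrable on $[0,T]$. Fix $\varphi\in H^{1}(\bS^{1})$ with $\|\varphi\|_{H^{1}}\leq 1$ and test the equation \eqref{eq:eveqh2} against $\varphi$. Isolating the dominant linear contribution computed in \eqref{eq:evolution_linearization_h}, we rewrite
\begin{equation*}
\partial_{t}h(s,t) \;=\; -\tfrac{1}{4}\Lambda h(s,t) \;-\; \gamma(s)\cdot \ddt M(t) \;+\; \cR(h)(s,t),
\end{equation*}
where $\cR(h)$ collects all nonlinear contributions from $J_{1}+J_{2}+J_{3}$ (i.e.\ the expressions \eqref{eq:J's} minus the linear parts isolated in Section \ref{sec:estLinfty}). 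The goal is to show that each of these three pieces, when paired against $\varphi$, is bounded by $C(t)\|\varphi\|_{H^{1}}$ with $C(t)\in L^{2}(0,T)$.

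The ODE term and the linear term are handled directly using the previous propositions. By Proposition \ref{prop:W1infty_enest}, $\|h(t)\|_{W^{1,\infty}}\leq \|h_{0}\|_{W^{1,\infty}}$, so $|\ddt M(t)|\leq C\|h(t)\|_{L^{\infty}}\leq C$ uniformly, and since $|\gamma(s)|\leq 1$ we obtain $\gamma\cdot\ddt M\in L^{\infty}(0,T;L^{2})\hookrightarrow L^{2}(0,T;H^{-1})$. For the linear part, integration by parts gives
\begin{equation*}
\left|\int \Lambda h(s,t)\,\varphi(s)\,\dd s\right| \;=\; \left|\int h(s,t)\,\varphi'(s)\,\dd s\right| \;\leq\; \|h(t)\|_{L^{2}}\,\|\varphi\|_{H^{1}},
\end{equation*}
which yields an $L^{\infty}(0,T;H^{-1})$ bound via Proposition \ref{prop:W1infty_enest}.

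The core of the proof is the bound on $\cR(h)$. The strategy is to repeat for each of the subpieces of $J_{1}, J_{2}, J_{3}$ the manipulations already developed in Sections \ref{sec:estLinfty} and \ref{sec:H1inftydecay}, but now paired against the generic test function $\varphi$ rather than against $h$ or $h'$. Whenever $h''(s-\alpha)$ appears inside an integrand, we use $h''(s-\alpha)=-\partial_{\alpha}h'(s-\alpha)$ followed by integration by parts in $\alpha$ to transfer the derivative onto the kernel, which depends smoothly on $h$ and $h'$ through expressions of the form \eqref{eq:auxiliary_variables} with denominators controlled by $4r(r-\theta)\sin^{2}(\alpha/2)+\theta^{2}$. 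After this reduction, a typical resulting piece has the form
\begin{equation*}
\iint \frac{\cK(s,\alpha,h,h')}{D(s,\alpha,h,h')}\,h'(s-\alpha)\,\varphi(s)\,\dd\alpha\,\dd s,
\end{equation*}
and the same Taylor expansions of denominators, trigonometric identities and cancellations used in the proof of Proposition \ref{prop:W1infty_enest} yield the pointwise-in-time bound $\cP(\|h\|_{W^{1,\infty}})\,(1+\|h\|_{H^{3/2}})\,\|\varphi\|_{H^{1}}$. The contributions that do not come from an $h''$ factor are easier: they are already first order in $h$ and are bounded by Cauchy--Schwarz combined with the duality $|\langle\Lambda^{1/2}f,g\rangle|\leq \|f\|_{L^{2}}\|g\|_{\dot H^{1/2}}\leq \|f\|_{L^{2}}\|g\|_{H^{1}}$.

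Summing all bounds gives $\|\partial_{t}h(t)\|_{H^{-1}}\leq \cP(\|h(t)\|_{W^{1,\infty}})(1+\|h(t)\|_{H^{3/2}})$, and squaring and integrating in time, Propositions \ref{prop:W1infty_enest} and \ref{prop:unif_bounds_L2h} provide the required bound by $C(T)$. The main obstacle is not conceptual but organizational: a full proof requires redoing a splitting of $J_{1}, J_{2}, J_{3}$ parallel to \eqref{eq:J's} and tracking the manifold integration-by-parts manipulations to ensure that every term pairs against $\varphi$ with at most one derivative falling on $\varphi$ and the remaining factors measurable by $\cP(\|h\|_{W^{1,\infty}})\|h\|_{H^{3/2}}$, which is square-integrable in time.
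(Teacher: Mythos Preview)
Your proposal is correct and follows essentially the same route as the paper: a duality argument against $\varphi\in H^{1}$, integration by parts in $\alpha$ to eliminate $h''$, and then control of the resulting singular integrals using the $W^{1,\infty}$ smallness together with the $L^{2}_{T}H^{3/2}$ bound from Proposition \ref{prop:unif_bounds_L2h}. The paper's proof singles out the most singular contribution, namely the piece $\int \theta(\partial_{\alpha}\theta)^{2}/(2r\sin(\alpha/2))^{2}\,\dd\alpha$ arising from $J_{2,2}$, and carries out explicitly the symmetrization in $(s,\sigma)$ (the same trick used for $W_{\RN{1},\RN{1}}$ in Section \ref{sec:H1inftydecay}) that turns the product $(h'(\sigma))^{2}\varphi(s)/(1+h(s))^{2}-(h'(s))^{2}\varphi(\sigma)/(1+h(\sigma))^{2}$ into a difference controllable by $\av{\Lambda^{1/2}h'}_{L^{2}}$ and $\av{\varphi}_{H^{1}}$; you allude to this only implicitly through your reference to the cancellations of Section \ref{sec:H1inftydecay}, so in a full write-up you would want to display this step for at least that worst term.
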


\begin{proof}
Thanks to the regularity results proved in the previous sections it suffices to prove a suitable  bound for the nonlinear terms in the evolution equation for $ h $. The bounds of Proposition \ref{prop:unif_bounds_path} are necessary in the application of an Aubin-Lions compactness theorem (cf. \cite{Simon87}) and are somewhat standard, for this reason we will sketch the computations only for the more singular terms and leave the rest of the computations for the interested reader. 

The term $ J_2 $ is the more singular term in \eqref{eq:J's} due to the presence of the term $ \partial_\alpha^2 \theta $. Using the notation of Section \ref{sec:J2} the term
\begin{equation*}
J_{2, 2}  = 
\ - \frac{1}{4\pi} \pv \int \partial_\alpha \pare{ \frac{\bra{2r\sin^2\pare{\alpha / 2} + \theta \cos(\alpha)} \bra{2r\sin^2\pare{\alpha/2}-\theta}}{4r\pare{r-\theta} \sin^2\pare{\alpha/2} +\theta^2} } \ \partial_\alpha \theta \  \dd \alpha , 
\end{equation*}
is the more singular  of the subterms making $ J_2 $. The term $ J_{2, 2} $, can be decomposed as in the previous section
\begin{equation*}
J_{2, 2} = J_{2, 2, \RN{1}} + J_{2, 2, \RN{2}} + J_{2, 2, \RN{3}}.
\end{equation*}
Let us denote with $ \overline{J_{2, 2, j}}, \ j=\RN{1}, \RN{2}, \RN{3} $ the more singular contributions of the terms $ J_{2, 2, j} , \ j=\RN{1}, \RN{2}, \RN{3} $, whose explicit expressions are
\begin{equation*}
\begin{aligned}
\overline{J_{2, 2, \RN{1}} }= & \  - \frac{1}{4\pi} \pv \int \frac{ 2r^2 \sin(\alpha) + 2\theta\partial_\alpha \theta}{\pare{2 r \sin\pare{\alpha / 2} }^4}   \theta^2 \partial_\alpha \theta \ \dd \alpha , \\
\overline{J_{2, 2, \RN{2}}} = & \   \frac{1}{4\pi} \pv \int  \frac{ 
 \theta \pare{ \partial_\alpha \theta  }^2 
}{\pare{2r \sin\pare{\alpha / 2}}^2 }  \ \dd \alpha , \\
\overline{J_{2, 2, \RN{3}}} = & \   \frac{1}{4\pi} \pv \int \frac{ \theta \pare{ \partial_\alpha \theta  }^2  }{\pare{2r \sin\pare{\alpha / 2}}^2}  \ \dd \alpha . 
\end{aligned}
\end{equation*}
Let us remark that the terms $ J_{2, 2, j} - \overline{J_{2, 2, j}}, \ j=\RN{1}, \RN{2}$ can be written as integral operator whose integration kernel is homogeneous of order zero. In particular the following bound holds true for any $ t\in \bra{0, T} $ and $ s\in\bS^1 $
\begin{equation*}
\av{J_{2, 2, j} \pare{s, t}  - \overline{J_{2, 2, j}}\pare{s, t}} \leq \cP \pare{\av{h\pare{t}}_{W^{1, \infty}}}. 
\end{equation*} As a consequence, they are more regular contributions and we can consider any $ \phi \in L^2 \pare{\bra{0, T}; H^1}, \  T \in\pare{ 0, T^\star }  $ with unitary norm and deduce the estimate
\begin{equation*}
\int_0^{T} \int \pare{J_{2, 2, j} \pare{s, t}  - \overline{J_{2, 2, j}}\pare{s, t}}  \phi\pare{s, t}\dd s \ \dd t \leq \cP\pare{\norm{h}_{L^\infty\pare{\bra{0, T}; W^{1, \infty}}}}. 
\end{equation*}
Let $ \phi $ be as above. We will indeed bound the remaining more singular terms by duality. Let us first focus on the term $\overline{J_{2, 2, \RN{2}}}$. We compute
\begin{multline*}
\int \pare{   \int \frac{ \theta \pare{ \partial_\alpha \theta  }^2  }{\pare{2r \sin\pare{\alpha / 2}}^2}  \ \dd \alpha } \phi\pare{s} \ds = \iint \frac{h\pare{s} - h\pare{s-\alpha}}{4 \sin^2\pare{\alpha/2}} \ \pare{h'\pare{s-\alpha}}^2 \ \frac{\phi\pare{s}}{\pare{ 1+h\pare{s} }^2 } \dd \alpha \ \ds \\
= - \iint \frac{h\pare{z} - h\pare{z-\beta}}{4 \sin^2\pare{\beta/2}} \ \pare{h'\pare{z}}^2 \ \frac{\phi\pare{z-\beta}}{\pare{ 1+h\pare{z-\beta} }^2 } \dd \beta \ \dd z, 
\end{multline*}
where in the last identity we used the change of variables $ s-\alpha = z, \  \beta = -\alpha $. We hence symmetrized the term $ \overline{J_{2, 2, \RN{2}}},$ as
\begin{multline}
\label{eq:nonlinear_cancellation}
\int \pare{  \int \frac{ \theta \pare{ \partial_\alpha \theta  }^2  }{\pare{2r \sin\pare{\alpha / 2}}^2}  \ \dd \alpha } \phi\pare{s} \ds \\ 
\begin{aligned}
= & \frac{1}{2}\iint \frac{h\pare{s} - h\pare{s-\alpha}}{4 \sin^2\pare{\alpha/2}} \bra{ \pare{h'\pare{s-\alpha}}^2 \ \frac{\phi\pare{s}}{\pare{ 1+h\pare{s} }^2} - \pare{h'\pare{s}}^2 \ \frac{\phi\pare{s-\alpha}}{\pare{ 1+h\pare{s-\alpha} }^2 }} \dd \alpha \ \ds, \\
= & \frac{1}{2} \iint \frac{h\pare{s} - h\pare{\sigma }}{4 \sin^2\pare{\frac{s-\sigma}{2}}} \bra{ \pare{h'\pare{\sigma}}^2 \ \frac{\phi\pare{s}}{\pare{ 1+h\pare{s} }^2} - \pare{h'\pare{s}}^2 \ \frac{\phi\pare{\sigma }}{\pare{ 1+h\pare{\sigma}}^2  } } \dd \sigma \ \ds.
\end{aligned}
\end{multline}

We compute
\begin{multline*}
 \pare{h'\pare{\sigma}}^2 \ \frac{\phi\pare{s}}{\pare{ 1+h\pare{s} }^2} - \pare{h'\pare{s}}^2 \ \frac{\phi\pare{\sigma }}{\pare{ 1+h\pare{\sigma}}^2  } \\
  =  \pare{h'\pare{\sigma}}^2  \pare{ \frac{\phi\pare{s}}{\pare{ 1+h\pare{s} }^2}   - \frac{\phi\pare{\sigma }}{\pare{ 1+h\pare{\sigma } }^2}  } - \frac{\phi\pare{\sigma }}{\pare{ 1+h\pare{\sigma } }^2} \pare{h'\pare{\sigma}+ h'\pare{s}}\pare{h'\pare{s}- h'\pare{\sigma} } , 
\end{multline*}
so that
\begin{multline*}
\int \pare{  \int \frac{ \theta \pare{ \partial_\alpha \theta  }^2  }{\pare{2r \sin\pare{\alpha / 2}}^2}  \ \dd \alpha } \phi\pare{s} \ds  \\
\begin{aligned}
 = & \ \frac{1}{2} \iint \frac{h\pare{s} - h\pare{\sigma }}{4 \sin^2\pare{\frac{s-\sigma}{2}}} \pare{h'\pare{\sigma}}^2  \pare{ \frac{\phi\pare{s}}{\pare{ 1+h\pare{s} }^2}   - \frac{\phi\pare{\sigma }}{\pare{ 1+h\pare{\sigma } }^2}  } \dd \sigma \ \dd s \\
& - \frac{1}{2} \iint \frac{h\pare{s} - h\pare{\sigma }}{4 \sin^2\pare{\frac{s-\sigma}{2}}} \frac{\phi\pare{\sigma }}{\pare{ 1+h\pare{\sigma } }^2} \pare{h'\pare{\sigma}+ h'\pare{s}}\pare{h'\pare{s}- h'\pare{\sigma} }  \dd \sigma \ \dd s 
& = M_1 +M_2 .
\end{aligned}
\end{multline*}
We start analyzing $ M_2 $. A H\"older inequality provides the bound
\begin{equation*}
\begin{aligned}
M_2 \leq & \ C \ \frac{\av{\phi}_{L^\infty} \av{h'}_{L^\infty}}{1-\av{h}_{L^\infty}} \pare{\iint \pare{\frac{h\pare{s} - h\pare{\sigma}}{2\sin\pare{\frac{s-\sigma}{2}}}}^2 \dd \sigma \ \dd s}^{1/2} \pare{\iint \pare{\frac{h' \pare{s} - h ' \pare{\sigma}}{2\sin\pare{\frac{s-\sigma}{2}}}}^2 \dd \sigma \ \dd s}^{1/2}, \\
= & C \ \frac{\av{\phi}_{L^\infty} \av{h'}_{L^\infty}}{1-\av{h}_{L^\infty}} \av{\Lambda^{1/2} h}_{L^2} \av{\Lambda^{1/2} h' }_{L^2} , \\
\leq & \cP\pare{\av{h}_{W^{1, \infty}}} \av{\phi}_{H^1} \av{\Lambda^{1/2} h'}_{L^2}. 
\end{aligned}
\end{equation*}
We control now the term $ M_1 $ as
\begin{equation*}
\begin{aligned}
M_1 \leq & \ C \av{h'}_{L^\infty}^2  \pare{\iint \pare{\frac{h\pare{s} - h\pare{\sigma}}{2\sin\pare{\frac{s-\sigma}{2}}}}^2 \dd \sigma \ \dd s}^{1/2}  \pare{\iint \pare{\frac{\frac{\phi\pare{s}}{\pare{1+h\pare{s}}^2} - \frac{\phi\pare{\sigma}}{\pare{1+h\pare{\sigma}}^2}}{2\sin\pare{\frac{s-\sigma}{2}}}}^2 \dd \sigma \ \dd s}^{1/2}, \\
= & \ C \av{h'}_{L^\infty}^2  \av{\Lambda^{1/2} h}_{L^2} \av{\Lambda^{1/2}\pare{\frac{\phi}{\pare{1+h}^2}}}_{L^2}, \\
\leq & \  \cP\pare{\av{h}_{W^{1, \infty}}} \av{\phi}_{H^1}. 
\end{aligned}
\end{equation*}
The bounds provided for $ M_1 $ and $ M_2 $ allow us to argue that 
\begin{equation*}
\int_0^{T} \overline{J_{2, 2,\RN{2}}}\pare{s, t} \ \phi\pare{s, t} \dd s\ \dd t \leq \cP\pare{\norm{h}_{L^\infty\pare{\bra{0, T}; W^{1, \infty}}}} \norm{\phi}_{L^2\pare{\bra{0, T}; H^1}} \pare{\norm{\Lambda^{1/2}h'}_{L^2\pare{\bra{0, T}; L^2}} + \sqrt{T}}. 
\end{equation*}
Similar bounds hold true for $ \overline{J_{2, 2,\RN{1}}} $. We hence proved that
\begin{equation*}
\norm{J_2}_{L^2\pare{\bra{0, T}; H^{-1}}} \leq C_T \cP\pare{\norm{h}_{L^\infty\pare{\bra{0, T}; W^{1, \infty}}}}  \pare{1 + \norm{\Lambda^{1/2}h'}_{L^2\pare{\bra{0, T}; L^2}}}.
\end{equation*}
Following the same ideas, we can obtain appropriate bounds for the terms $ J_1 $ and $ J_3 $. These estimates combined with the result of Proposition \ref{prop:unif_bounds_L2h} allow us to conclude the proof of Proposition \ref{prop:unif_bounds_path}. 
\end{proof}

\section{Proof of Theorem \ref{teo1}}\label{sec7}

In the present section we prove the main result of the manuscript via an approximation and compactness argument. Let us consider the regularized problem
\begin{equation}
\label{eq:Peskin_approximated}
\left\lbrace
\begin{aligned}
& \partial_t h_\varepsilon + \Lambda h_\varepsilon -\varepsilon h''_\varepsilon  = \cN\pare{h_\varepsilon}, \\
& \left. h^\varepsilon\right|_{t=0} = \eta^\varepsilon \star  h_0, 
\end{aligned}
\right. 
\end{equation}
where for $ \varepsilon > 0 $, $ s\in\bS^1 $, the function $ \eta^\varepsilon $ is the periodic heat kernel at time $\varepsilon$ and the nonlinearity $ \cN $ is defined as
\begin{equation}\label{eq:cN}
\cN\pare{h_\varepsilon} = J_1\pare{h_\varepsilon} + J_2\pare{h_\varepsilon} + J_3\pare{h_\varepsilon} + \Lambda h_\varepsilon - \frac{1}{4} h_\varepsilon \star \cos, 
\end{equation}
and the terms $ J_k = J_k\pare{h}, \ k=1, 2, 3 $ are defined in \eqref{eq:eveqh2}.

Using Picard's Theorem together with the standard mollifier approach and energy estimates (see \cite{bertozzi2001vorticity}) we can prove that, fixed $ \varepsilon > 0 $, there exists a $ T_\varepsilon \in \left( 0, \infty\right] $ and a maximal solution $ h_\varepsilon $ of \eqref{eq:Peskin_approximated} which belongs to the space
\begin{align}\label{eq:analytic_regularity_approx_solutions}
h_\varepsilon \in \cC^1\pare{\left[ 0, T_\varepsilon \right) ; H^3}.
\end{align}
At this point these approximate solutions may be defined only locally in time. Furthermore, using that our approximation scheme is merely a vanishing viscosity approach, this solution satisfies the same \emph{a priori} bounds in $L^\infty\pare{ 0,T;W^{1,\infty} } $ and $L^2 \pare{ 0,T;H^{3/2} }$ stated in Propositions \ref{prop:W1infty_enest}, \ref{prop:unif_bounds_L2h} and \ref{prop:unif_bounds_path}. Furthermore, we can prove the following $ L^2 $ estimate for \eqref{eq:Peskin_approximated}. We have indeed that for $ t\in\bra{0, T_\varepsilon } $
\begin{multline*}
\frac{1}{2} \ddt \av{h_\varepsilon\pare{t}}_{L^2}^2 +   \av{\Lambda^{1/2} h_\varepsilon\pare{ t}}_{L^2}^2 +\varepsilon \av{h_\varepsilon ' \pare{ t}}_{L^2}^2 
\leq \av{\cN\pare{h_\varepsilon}}_{H^{-1}} \av{h_\varepsilon}_{H^1} \\
  \leq \frac{C}{\varepsilon} \av{\cN\pare{ h_\varepsilon \pare{t} }}_{H^{-1}}^2 + \frac{\varepsilon }{2} \pare{ \av{h_\varepsilon\pare{t}}_{L^2}^2 +  \av{h_\varepsilon '\pare{t}}_{L^2}^2  }.
\end{multline*}
As a consequence, an integration-in-time gives that
\begin{equation*}
 \av{h_\varepsilon\pare{t}}_{L^2}^2 + \int_0 ^t \pare{\av{\Lambda^{1/2} h_\varepsilon\pare{\tau}}_{L^2}^2 +\varepsilon \av{h_\varepsilon ' \pare{\tau}}_{L^2}^2 } \dd \tau \leq \av{h_0}_{L^2}^2 e^{\frac{C t}{\varepsilon}} + \frac{C e^{\frac{C t}{\varepsilon}} }{\varepsilon} \av{ \cN\pare{h_\varepsilon}}_{L^2\pare{\bra{0, t}; H^{-1}}}^2. 
\end{equation*}
The computations performed in the proof of Proposition \ref{prop:unif_bounds_path} assure us that $ \cN \pare{h_\varepsilon} \in L^2 \pare{\bra{0, T_\varepsilon}; H^{-1}}$, so that
\begin{align*}
\av{h_\varepsilon\pare{t}}_{L^2}^2 + \int_0 ^t \pare{\av{\Lambda^{1/2} h_\varepsilon\pare{\tau}}_{L^2}^2 +\varepsilon \av{h_\varepsilon ' \pare{\tau}}_{L^2}^2 } \dd \tau \leq \av{h_0}_{L^2}^2 + \frac{C\pare{ T_\varepsilon, \varepsilon}}{\varepsilon} \pare{1 + \av{h_\varepsilon}_{L^\infty\pare{\bra{0, T_\varepsilon}; W^{1, \infty}}}^N } , && N\gg 1.
\end{align*}
We recall now the result of Proposition \ref{prop:W1infty_enest}, which ensures us that $  \norm{h_\varepsilon}_{L^\infty\pare{\bra{0, T_\varepsilon}; W^{1, \infty}}}\leq c_0 $. This allow us to bound the right hand side of the above inequality with a quantity which is independent of $ h_\varepsilon $. A continuation argument for ODEs allow us to bootstrap the result, thus proving the following bound
\begin{equation*}
\av{h_\varepsilon\pare{t}}_{L^2}^2 + \int_0 ^t \pare{\av{\Lambda^{1/2} h_\varepsilon\pare{\tau}}_{L^2}^2 +\varepsilon \av{h_\varepsilon ' \pare{\tau}}_{L^2}^2 } \dd \tau \leq c_0^2 + \frac{C\pare{T, \varepsilon}}{\varepsilon} \pare{1 + c_0^N } . 
\end{equation*}
Similarly, using standard energy estimates together with the ideas in the previous sections and Propositions \ref{prop:W1infty_enest}, \ref{prop:unif_bounds_L2h} and \ref{prop:unif_bounds_path} it is possible to prove that if
\begin{equation*}
\av{h_\varepsilon ^{\pare{n-1}} \pare{t}}_{L^2}^2 + \int_0^t \pare{  \av{\Lambda^{1/2}h_\varepsilon^{\pare{n-1}} \pare{\tau}}_{L^2}^2 + \frac{\varepsilon}{2} \av{h_\varepsilon ^{\pare{n}} \pare{\tau}}_{L^2}^2 } \dd \tau \leq C_{n-1}(\varepsilon, T , c_0) , 
\end{equation*}
then
\begin{equation*}
\av{h_\varepsilon ^{\pare{n}} \pare{t}}_{L^2}^2 + \int_0^t \pare{  \av{\Lambda^{1/2}h_\varepsilon^{\pare{n}} \pare{\tau}}_{L^2}^2 + \frac{\varepsilon}{2} \av{h_\varepsilon ^{\pare{n+1}} \pare{\tau}}_{L^2}^2 } \dd \tau \leq C_{n}(\varepsilon, T , c_0) , 
\end{equation*}
for any $ n\geq 2 $, 
where the constants $ C_j\pare{\varepsilon, T}, \ j\geq 2 $ are \textit{not} uniformly bounded in $ \varepsilon $. \\

 We can thus find that 
\begin{equation}\label{eq:global_est_H3_approx_Peskin}
\av{h_\varepsilon\pare{t}}_{H^3}^2 + \int_0^t \pare{  \av{\Lambda^{1/2}h_\varepsilon\pare{\tau}}_{H^3}^2 + \frac{\varepsilon}{2} \av{h_\varepsilon ' \pare{\tau}}_{H^3}^2 } \dd \tau \leq C(\varepsilon, T , c_0 ). 
\end{equation}
In particular, we find that the approximate solutions are smooth and global in time.  

The global bounds of \eqref{eq:global_est_H3_approx_Peskin} allow us to apply the regularity results stated in Propositions \ref{prop:W1infty_enest}, \ref{prop:unif_bounds_L2h} and \ref{prop:unif_bounds_path}. Using a standard Aubin-Lions compactness theorem (cf. \cite[Corollary 4]{Simon87}), we find that
\begin{equation}
\label{eq:weak_convergence}
\begin{aligned}
h_\varepsilon\rightarrow h & \text{ in }L^2 \pare{ 0,T;H^{\frac{3}{2}-\vartheta} }, \ \vartheta > 0, \\
h_\varepsilon \rightharpoonup h &\text{ in }L^2 \pare{ 0,T;H^{3/2} }, \\
h_\varepsilon \xrightharpoonup{\ast} h & \text{ in }L^\infty \pare{ 0,T;L^\infty } ,
 \\
h'_\varepsilon \xrightharpoonup{\ast} h' & \text{ in }L^\infty \pare{ 0,T;L^\infty }.
\end{aligned}
\end{equation}

We now  take $\varphi\in C^\infty_{c}\pare{ [0,T)\times\mathbb{S}^1 } $ and consider the weak formulation of the approximate problems
\begin{multline}\label{eq:weak_eq_}
-\int \varphi(s,0)\eta^\varepsilon \star  h_0(s) \dd s \\
+\int_0^T\int \Pare{ -\partial_t\varphi (s,t) h_\varepsilon (s,t) + \Lambda\varphi (s,t) h_\varepsilon(s,t) -\varepsilon \varphi''(s,t)h_\varepsilon(s,t)  - \cN\pare{h_\varepsilon(s,t)}\varphi(s,t) } \dd s \ \dd t=0.
\end{multline}

The previous regularity and convergence results are enough in order to pass to the limit in the nonlinear terms. Let us sketch why it is so. Let us use the notation $ \theta_\varepsilon = h_\varepsilon\pare{s}-h_\varepsilon\pare{s-\alpha} $ and $ r_\varepsilon = 1+h_\varepsilon\pare{s} $. We use an argument similar the one stated in Section \ref{sec:path} to argue that the term
\begin{equation*}
Z_\varepsilon \pare{s, t} = \int \frac{\theta_\varepsilon \pare{\partial_\alpha \theta_\varepsilon}^2}{\pare{ 2 r_\varepsilon \sin\pare{\alpha / 2} }^2 } \dd\alpha , 
\end{equation*}
is the more singular contribution of the many composing $ \cN\pare{h_\varepsilon\pare{s}} $, hence, defining
\begin{equation*}
Z \pare{s, t} = \int \frac{\theta \pare{\partial_\alpha \theta }^2}{\pare{ 2 r \sin\pare{\alpha / 2} }^2 } \dd\alpha , 
\end{equation*}
we aim to prove that
\begin{equation*}
\cZ_\varepsilon =
\int_0^T \int \pare{Z_\varepsilon\pare{s, t} - Z\pare{s, t}}\varphi\pare{s, t}\dd s \ \dd t \xrightarrow{\varepsilon\to 0}0, 
\end{equation*}
for each $\varphi\in C^\infty_{c} \pare{ [0,T)\times\mathbb{S}^1 }$. This will establish the weak convergence for the more singular nonlinear term in $ \cN $. We write $ \cZ_\varepsilon = \cZ_{\varepsilon, 1} + \cZ_{\varepsilon, 2} $ where
\begin{align*}
\cZ_{\varepsilon, 1} & = \int_0^T \int \pare{ \int \frac{\pare{ \theta_\varepsilon -\theta  }\pare{\partial_\alpha \theta_\varepsilon}^2}{\pare{ 2 r_\varepsilon \sin\pare{\alpha / 2} }^2 } \dd\alpha } \varphi\pare{s, t} \dd s \ \dd t, \\
\cZ_{\varepsilon, 2} & = \int_0^T \int \pare{ \int \frac{\theta   \pare{\partial_\alpha \theta_\varepsilon + \partial_\alpha \theta} \pare{\partial_\alpha \theta_\varepsilon - \partial_\alpha \theta}}{\pare{ 2 r_\varepsilon \sin\pare{\alpha / 2} }^2 } \dd\alpha } \varphi\pare{s, t} \dd s \ \dd t .
\end{align*}
Let us symmetrize the term  $ \cZ_{\varepsilon, 1} $. This gives that
\begin{align*}
\cZ_{\varepsilon, 1} = & \  \int_0^T \iint \frac{\pare{\theta_\varepsilon - \theta} h'\pare{\sigma} \varphi\pare{s}}{4\pare{1+h_\varepsilon\pare{s}}^2 \sin^2\pare{\frac{s-\sigma}{2}}} \dd s  \dd \sigma \  \dd t - \int_0^T \iint \frac{\pare{\theta_\varepsilon - \theta} h'\pare{s} \varphi\pare{ \sigma }}{4\pare{1+h_\varepsilon\pare{\sigma}}^2 \sin^2\pare{\frac{s-\sigma}{2}}} \dd s  \dd \sigma \  \dd t, \\
= & \ \int_0^T \iint \frac{\theta_\varepsilon - \theta}{4 \sin^2\pare{\frac{s-\sigma}{2}}}  
\set{
\frac{h'\pare{\sigma} \varphi\pare{s}}{2\pare{1+h_\varepsilon\pare{s}}^2}
- 
\frac{h'\pare{s} \varphi\pare{\sigma}}{2\pare{1+h_\varepsilon\pare{\sigma }}^2} 
} \dd s \dd \sigma \ \dd t , \\
= & \ \int_0^T \iint \frac{\theta_\varepsilon - \theta}{2 \sin\pare{\frac{s-\sigma}{2}}}\frac{1}{2 \sin\pare{\frac{s-\sigma}{2}}} \\
& \qquad \qquad \times
\set{ h'\pare{\sigma} \bra{
\frac{ \varphi\pare{s}}{2\pare{1+h_\varepsilon\pare{s}}^2}
- 
\frac{ \varphi\pare{\sigma }}{2\pare{1+h_\varepsilon\pare{\sigma }}^2}
}
- 
\frac{\varphi\pare{\sigma}}{2\pare{1+h_\varepsilon\pare{\sigma }}^2} \bra{h'\pare{\sigma} - h'\pare{s}} 
} \dd s \dd \sigma \ \dd t . 
\end{align*} 
From the above integral equality, using H\"older's inequality, we obtain that
\begin{equation*}
\av{\cZ_{\varepsilon, 1}} \leq C \int_0^T \av{\Lambda^{1/2}\pare{h_\varepsilon - h}}_{L^2} \set{\av{\Lambda^{1/2}h'}_{L^2} \av{\frac{\varphi}{\pare{1+h_\varepsilon}^2}}_{L^\infty} + \av{h'}_{L^\infty}  \av{\Lambda^{1/2}\pare{ \frac{\varphi}{\pare{1+h_\varepsilon}^2} }}_{L^2}
} \dd t.
\end{equation*}
Hence standard computations show that the convergence proved in \eqref{eq:weak_convergence} is sufficient to establish that
\begin{equation*}
\cZ_{\varepsilon, 1}\xrightarrow{\varepsilon\to 0}0.
\end{equation*}

We compute
\begin{align*}
\cZ_{\varepsilon, 2}&=\cZ_{\varepsilon, 2,\RN{1}}+\cZ_{\varepsilon, 2,\RN{2}}  \\
\end{align*}
where
\begin{align*}
\cZ_{\varepsilon, 2,\RN{1}}& = \int_0^T \int \pare{ \int \frac{\theta   \pare{\partial_\alpha \theta_\varepsilon + \partial_\alpha \theta} \pare{\partial_\alpha \theta_\varepsilon - \partial_\alpha \theta}}{\pare{ 2 r_\varepsilon \sin\pare{\alpha / 2} }^2 }-\frac{\theta   \pare{\partial_\alpha \theta_\varepsilon + \partial_\alpha \theta} \pare{\partial_\alpha \theta_\varepsilon - \partial_\alpha \theta}}{\pare{ 2 r\sin\pare{\alpha / 2} }^2 }  \dd\alpha } \varphi\pare{s, t} \dd s \ \dd t ,\\
\cZ_{\varepsilon, 2,\RN{2}}& = \int_0^T \int \pare{ \int \frac{\theta   \pare{ \pare{\partial_\alpha \theta_\varepsilon}^2 - \pare{\partial_\alpha \theta}^2 }}{\pare{ 2 r \sin\pare{\alpha / 2} }^2 } \dd\alpha } \varphi\pare{s, t} \dd s \ \dd t .
\end{align*}
The term $\cZ_{\varepsilon, 2,\RN{1}}$ can be handled as $\cZ_{\varepsilon, 1}$. For the term $\cZ_{\varepsilon, 2,\RN{1}}$ we have to use a weak-strong convergence type argument. We decompose it as
\begin{align*}
\cZ_{\varepsilon, 2,\RN{2}}& = \cA_{\varepsilon, 1}+\cA_{\varepsilon, 2},
\end{align*}
with
\begin{align*}
\cA_{\varepsilon, 1}&=\int_0^T \int \pare{ \int \frac{\theta   \pare{\partial_\alpha \theta_\varepsilon-\partial_\alpha \theta}\partial_\alpha \theta_\varepsilon}{\pare{ 2 r \sin\pare{\alpha / 2} }^2 } \dd\alpha } \varphi\pare{s, t} \dd s \ \dd t \\
\cA_{\varepsilon, 2}&=\int_0^T \int \pare{ \int \frac{\theta \partial_\alpha \theta  \pare{\partial_\alpha \theta_\varepsilon - \partial_\alpha \theta}}{\pare{ 2 r \sin\pare{\alpha / 2} }^2 } \dd\alpha } \varphi\pare{s, t} \dd s \ \dd t.
\end{align*}
Let us focus first on the second term. Using
$$
\partial_\alpha \theta=-\partial_\alpha h(s-\alpha)=h'(s-\alpha),
$$
we find that
$$
-\partial_\alpha \theta=-h'(s-\alpha)+h'(s)-h'(s)=\theta'-h'(s).
$$
Using this we can equivalently write
\begin{align*}
\cA_{\varepsilon, 2}& = \int_0^T \int \pare{ \int \frac{\theta \partial_\alpha \theta  \pare{\partial_\alpha \theta_\varepsilon - \partial_\alpha \theta}}{\pare{ 2 r \sin\pare{\alpha / 2} }^2 } \dd\alpha } \varphi\pare{s, t} \dd s \ \dd t\\
&=\int_0^T \int \pare{ \int \frac{\theta \left(\theta'-h'(s)\right)  \pare{\theta'_\varepsilon-h'_\varepsilon(s) - \theta'+h'(s)}}{\pare{ 2 r \sin\pare{\alpha / 2} }^2 } \dd\alpha } \varphi\pare{s, t} \dd s \ \dd t.
\end{align*}
We can decompose it as
\begin{align*}
\cB_{\varepsilon, 1}&=\int_0^T \int  \int \frac{\theta \theta' \pare{\theta'_\varepsilon- \theta'}}{\pare{ 2 r \sin\pare{\alpha / 2} }^2 } \dd\alpha  \varphi\pare{s, t} \dd s \ \dd t\\
\cB_{\varepsilon, 2}&=\int_0^T \int  \int \frac{\theta \theta'  \pare{h'(s)-h'_\varepsilon(s)}}{\pare{ 2 r \sin\pare{\alpha / 2} }^2 } \dd\alpha \varphi\pare{s, t} \dd s \ \dd t\\
\cB_{\varepsilon, 3}&=-\int_0^T \int  \int \frac{\theta h'(s) \pare{\theta'_\varepsilon- \theta'}}{\pare{ 2 r \sin\pare{\alpha / 2} }^2 } \dd\alpha  \varphi\pare{s, t} \dd s \ \dd t\\
\cB_{\varepsilon, 4}&=-\int_0^T \int  \int \frac{\theta h'(s)  \pare{h'(s)-h'_\varepsilon(s)}}{\pare{ 2 r \sin\pare{\alpha / 2} }^2 } \dd\alpha \varphi\pare{s, t} \dd s \ \dd t.
\end{align*}
The term $\cB_{\varepsilon, 4}$ can be handled easily. We observe that it can be rewritten as
$$
\cB_{\varepsilon, 4}=C\int_0^T \int \frac{\Lambda h(s) h'(s)}{r}  \pare{h'(s)-h'_\varepsilon(s)}\varphi\pare{s, t} \dd s \ \dd t\leq C\|h-h_\varepsilon\|_{L^2_T H^1_x}\|h\|_{L^2_T H^1_x}\|h\|_{L^\infty_T W^{1,\infty}_x}\xrightarrow{\varepsilon\to 0}0.
$$
For the term $\cB_{\varepsilon, 2}$ we proceed as follows,
\begin{equation*}
\cB_{\varepsilon, 2}=\int_0^T \int  \int \frac{\theta \theta'  \pare{ h'(s)-h'_\varepsilon(s)}}{\pare{ 2 r \sin\pare{\alpha / 2} }^2 } \dd\alpha \varphi\pare{s, t} \dd s \ \dd t \\
\leq C\norm{h'}_{L^\infty_T L^\infty_x} \|\Lambda^{1/2}h'\|_{L^2_T L^2_x}\|h'-h'_\varepsilon\|_{L^2_T L^2_x} \xrightarrow{\varepsilon\to 0}0.
\end{equation*}
Taking $0<\delta\ll1$ we also compute
\begin{align*}
\cB_{\varepsilon, 3}&\leq C\|h'\|^2_{L^\infty_T L^\infty_x}\int_0^T \iint \frac{ \av{\theta'_\varepsilon- \theta'}}{ |\sin\pare{\alpha / 2}|^{1-\delta/2+\delta/2}  } \dd\alpha \dd s \ \dd t\\
&\leq C\|h'\|_{L^\infty_T L^\infty_x}^2   \left(\int \frac{1}{|\sin\pare{\alpha / 2}|^{\delta}}\dd \alpha\right)^{1/2} \int_0^T \left(\iint \frac{ \av{\theta'_\varepsilon- \theta'}^2}{ |\sin\pare{\alpha / 2}|^{2-\delta}  } \dd\alpha \dd s\right)^{1/2}  \ \dd t \\
&\leq C \sqrt{T} \|h'\|_{L^\infty_T L^\infty_x}^2 \norm{h'_\varepsilon - h'}_{L^2_T \dot{H}^{\frac{1-\delta}{2}}_x} \xrightarrow{\varepsilon\to 0}0.
\end{align*}
Finally, the last term $\cB_{\varepsilon, 1}$ can be handled as $\cB_{\varepsilon, 3}$. As a consequence, $h$ is a weak solution of \eqref{eq:eveqh2}.

In addition, the maximum principle
$$
\av{h(t)}_{W^{1,\infty}}\leq \av{h_0}_{W^{1,\infty}}\quad\forall\,0\leq t\leq T
$$
and the exponential decay
$$
\av{h'(t)}_{L^{\infty}}\leq \av{h'_0}_{L^{\infty}}e^{-\delta t}\quad\forall\,0\leq t\leq T
$$
follow from the application of Proposition \ref{prop:W1infty_enest} to the regularized problem and the weak-$*$ lower semicontinuity of the norm. We argue as in \cite[Lemma 4.3]{CCGS12} in order to state that, since $ h $ is a uniform limit of continuous functions we obtain as well that
\begin{equation*}
h\in \cC\pare{\bra{0, T}\times\bS^1}. 
\end{equation*}

  \hfill $ \Box $

\section*{Acknowledgments}
The research of F.G. has been partially supported by the grant MTM2017-89976-P (Spain) and by the ERC through
the Starting Grant project H2020-EU.1.1.-639227.

\noindent The research of R.G.B. is supported by the project ”Mathematical Analysis of Fluids and Applications” with reference PID2019-109348GA-I00/AEI/ 10.13039/501100011033 and acronym ``MAFyA” funded by Agencia Estatal de Investigaci\'on and the Ministerio de Ciencia, Innovacion y Universidades (MICIU).

\noindent The research of S.S. is supported by the ERC through
the Starting Grant project H2020-EU.1.1.-639227.

	\begin{footnotesize}

	\end{footnotesize}

\end{document}